\def\subsubsection{\@startsection{subsubsection}{3}%
	\z@{.5\linespacing\@plus.7\linespacing}{-.5em}%
	{\normalfont\bfseries}}
\newtheorem{thm}{Theorem}[section]
\newtheorem{lemme}[thm]{Lemma}
\newtheorem{prop}[thm]{Proposition}
\newtheorem{coro}[thm]{Corollary}
\newtheorem{quest}[thm]{Question}
\newtheorem{fact}{Fact}
\newtheorem*{thm*}{Theorem}
\theoremstyle{definition}
\newtheorem{rem}[thm]{Remark}
\newtheorem{ex}[thm]{Example}
\newtheorem{defi}[thm]{Definition}
\def\D{{\mathbb{D}}}
\def\C{{\mathbb{C}}}
\def\N{{\mathbb{N}}}
\def\UU{{\mathcal{U}}}
\def\TT{{\mathcal{T}}}
\def\LL{{\mathcal{L}}}
\def\EE{{\mathcal{E}}}
\def\DD{{\mathcal{D}}}
\def\PP{{\mathcal{P}}}
\def\n{\vert\vert}
\def\dlow{\underline{d}}
\def\dup{\overline{d}}
\newcommand{\dlowg}[1]{\underline{d}_{#1}}
\newcommand{\dupg}[1]{\overline{d}_{#1}}
\numberwithin{equation}{section} %% Comment out for sequentially-numbered
\numberwithin{figure}{section} %% Comment out for sequentially-numbered
\numberwithin{table}{section} %% Comment out for sequentially-numbered
\newcommand{\indicatrice}[1]{\mathds{1}_{#1}}
\begin{document}
	\title{Common frequent hypercyclicity}
	\author{S. Charpentier, R. Ernst, M. Mestiri, A. Mouze}
	
	\address{St\'ephane Charpentier, Institut de Math\'ematiques, UMR 7373, Aix-Marseille
	Universite, 39 rue F. Joliot Curie, 13453 Marseille Cedex 13, France}
	\email{stephane.charpentier.1@univ-amu.fr}
	
	\address{Romuald Ernst, Univ. Littoral Côte d'Opale, UR 2597, LMPA, Laboratoire de Mathématiques Pures et Appliquées Joseph Liouville, Centre Universitaire de la Mi-Voix, Maison de la Recherche
	Blaise-Pascal, 50 rue Ferdinand Buisson, BP 699, 62228 Calais, France}
	\email{ernst.r@math.cnrs.fr}
	
	\address{Monia Mestiri, Université d'Artois, UR2462, Laboratoire de Math\'ematiques de Lens (LML), F-62300 Lens, France}
	\email{monia.mestiri@umons.ac.be}
	
	\address{Augustin Mouze, \'Ecole Centrale de Lille, Univ. Lille, CNRS, UMR 8524 - Laboratoire Paul Painlev\'e, F-59000 Lille, France}
	\email{augustin.mouze@univ-lille.fr}
	
	\thanks{The authors are supported by the grant ANR-17-CE40-0021 of the French National Research Agency ANR (project Front).}
	\keywords{Frequently hypercyclic operator, weighted shift operator}
	\subjclass[2010]{47A16, 47B37}
	
	\begin{abstract}We provide with criteria for a family of sequences of operators to share a frequently universal vector. These criteria are inspired by the classical Frequent Hypercyclicity Criterion and by a recent criterion due to Grivaux, Matheron and Menet where periodic points play the central role. As an application, we obtain for any operator $T$ in a specific class of operators acting on a separable Banach space, a necessary and sufficient condition on a subset $\Lambda$ of the complex plane for the family $\{\lambda T:\,\lambda \in \Lambda\}$ to have a common frequently hypercyclic vector. In passing, this allows us to exhibit frequently hypercyclic weighted shifts which do not possess common frequently hypercyclic vectors. We also provide with criteria for families of the recently introduced operators of $C$-type to share a common frequently hypercyclic vector. Further, we prove that the same problem of common $\alpha$-frequent hypercyclicity may be vacuous, where the notion of $\alpha$-frequent hypercyclicity extends that of frequent hypercyclicity replacing the natural density by more general weighted densities. Finally, it is already known that any operator satisfying the classical Frequent Universality Criterion is $\alpha$-frequently universal for any sequence $\alpha$ satisfying a suitable condition. We complement this result by showing that for any such operator, there exists a vector $x$ which is $\alpha$-frequently universal for $T$, with respect to all such densities $\alpha$.
	\end{abstract}
	
	\maketitle
	
\section{Introduction}\label{Sec1}

For two separable Fr\'echet spaces $X$ and $Y$, let us denote by $\LL(X,Y)$ the set of all continuous operators from $X$ to $Y$. If $X=Y$, we simply write $\LL(X)=\LL(X,Y)$. A sequence $\TT=(T_n)_{n\in \N}\subset \LL(X,Y)$, where $\N$ stands for the set $\{0,1,2,\ldots \}$, is said to be \emph{universal} provided there exists a vector $x\in X$ such that for any non-empty open subset $U$ of $Y$, the return set
\[
N(x,U,\TT):=\{n\in \N:\,T_n(x)\in U\}
\]
is infinite. Such a vector $x$ is also called \emph{universal} and the set of all universal vectors for $\TT$ is denoted by $\UU(\TT)$. In the particular case where the universal sequence $\TT$ is given by the iterates $(T^n)_{n\in\N}$ of a single operator $T\in \LL(X)$, the operator $T$ is said to be \emph{hypercyclic} and we denote $N(x,U,\TT)$ by $N(x,U,T)$ and $\UU(\TT)$ by $HC(T)$. In 2006, Bayart and Grivaux \cite{Baygrifrequentlyhcop} introduced the notion of \emph{frequently hypercyclic operator} which quickly became one of the most important notion in linear dynamics. An operator $T\in \LL(X)$ is said to be \emph{frequently hypercyclic} if there exists $x\in X$ such that for any non-empty open subset $U$ of $X$, the \emph{lower density} of the return set $\dlow(N(x,U,T))$ is positive, where for any $E\subset \N$,
\[
\dlow(E):=\liminf_{n \to \infty} \frac{\text{card}([0,n]\cap E)}{n+1}.
\]
Such a vector $x$ is said to be \emph{frequently hypercyclic} for $T$ and the set of such vectors is denoted by $FHC(T)$. The notion of \emph{frequent universality} for a sequence $\TT$ of operators in $\LL(X,Y)$ can be similarly defined (see e.g., \cite{BonGE1}) and we denote by $\mathcal{F}\UU(\TT)$ the set of frequently universal vectors. Such behaviors look so unusual that it might seem difficult to come by with examples. However, as we shall see further on, there exist different ways to exhibit frequently hypercyclic operators, even within classical families of operators.
One of the most efficient tools to prove that an operator is frequently hypercyclic is a criterion given by Bayart and Grivaux in 2006, which is known as the Frequent Hypercyclicity Criterion and whose proof is constructive and based on the construction of a frequently hypercyclic vector as an infinite series. An extension of this last criterion for frequent universality called the Frequent Universality Criterion has been given by Bonilla and Grosse-Erdmann in 2009 \cite{BonGE}.
For a rich source of information on these well-studied notions and more about linear dynamics, we refer to the monographs \cite{bm,gp}.

A problem which has been extensively studied during the last decades is that of \emph{common hypercyclicity}.
For a given family $(T_{\lambda})_{\lambda\in \Lambda}$ of hypercyclic operators in $\LL(X)$, it consists in studying whether this family shares \emph{common hypercyclic vectors} or not, \textit{i.e.} deciding if the set $\bigcap_{\lambda \in \Lambda}HC(T_{\lambda})$ is empty or not.
Chapter 7 of \cite{bm} and Chapter 11 of \cite{gp} are entirely devoted to this topic.
On the one hand, since $HC(T)$ is a dense $G_{\delta}$ subset of $X$ whenever it is non-empty, the Baire Category Theorem ensures that $\bigcap_{\lambda \in \Lambda}HC(T_{\lambda})$ is non-empty whenever $\Lambda$ is a countable non-empty set and each $T_{\lambda}$ is hypercyclic.
On the other hand, it is not difficult to exhibit families of hypercyclic operators with no common hypercyclic vectors (for example the family of all hypercyclic weighted shifts on $\ell^2(\N)$, see \cite[Example 7.1]{bm}).
The first positive important result on that topic was obtained by Abakumov and Gordon \cite{AbakGor} who showed that the set $\bigcap _{\lambda >1}HC(\lambda B)$ is not empty, where $B$ is the backward shift on $\ell^2(\N)$ defined by $B(x_0,x_1,x_2,\ldots)=(x_1,x_2,x_3,\ldots)$.
Later on, Costakis and Sambarino \cite{CosSam} provided with the first \emph{criterion of common hypercyclicity} that they applied to prove the residuality of the set of common hypercyclic vectors for multiples of the backward shift and of differential operators, and also for uncountable families of translation operators and of specific families of weighted shifts.
The approach used by Costakis and Sambarino, based on the Baire Category Theorem, has been developed and modified by many authors to produce new criteria and to find common hypercyclic vectors for other uncountable families of classical operators, such as adjoint of multiplication operators, or composition and convolution operators (see e.g., \cite{Baycompop, BayGriMor, BayMathIndiana, ChanSand, CosCesaro, GalPar}).
A second approach to the problem, relying on some group arguments, produced some of the most striking results. Indeed, Le\'on-M\"uller's Theorem which asserts that for any $T\in \LL(X)$ and any $\lambda \in \C$, $|\lambda|=1$, $HC(T)=HC(\lambda T)$ \cite{LeonMul} can be viewed as an extremely strong result of common hypercyclicity.
Their idea, which exploits the group structure of the torus $\mathbb{T}=\{z\in \C:\,|z|=1\}$, was extended by several authors to families of operators forming groups or semigroups, and then combined with the first approach to produce some new and strong results (e.g., \cite{BayIMRN, BesRacsam, ConMulPer, Shkarin, Tsirivas}). However, such an approach cannot be performed without an underlying group or semigroup structure.
Finally, we should mention that the non-existence of common universal vectors has also been studied (see e.g., \cite{BayIMRN, bm, CosTsiVla, gp}).

These kind of questions have also been considered for stronger notions. In particular, Mestiri \cite{Moniaphd,Monia} recently studied these questions for the intermediate notion of $\UU$-frequent hypercyclicity, introduced by Shkarin in 2009 \cite{Shkarinup}, which lies between hypercyclicity and frequent hypercyclicity. The formal definition of $\UU$-frequent hypercyclicity is the same as that of frequent hypercyclicity given above except that the lower density is replaced by the \emph{upper density} $ \dup$ where for any $E\subset \N$,
\[
\dup(E):=\limsup_{n \to \infty} \frac{\text{card}([0,n]\cap E)}{n+1}.
\]
Despite the proximity with the definition of frequent hypercyclicity, $\UU$-frequent hypercyclicity often appears to be ``closer'' to hypercyclicity. An illustration of this ambivalence is shown by the residuality of the set of all $\UU$-frequently hypercyclic vectors for $T$ whenever it is non-empty \cite[Proposition 21]{BayRuz}, like in the hypercyclic case. We shall see later, that this is a major difference with frequent hypercyclicity. This allows oneself to use Baire Category arguments while studying common $\UU$-frequent hypercyclicity. 
Up to now, criteria of existence and of non-existence of such vectors have been obtained. We refer to \cite{Moniaphd,Monia} and the references therein for a complete and up-to-date overview on the subject.

In comparison, \emph{common frequent hypercyclicity} for families of operators has been considered in only a very few amount of papers. A probable reason is that the Baire Category approach, which proved to be so effective for the previous notions, drastically fails for this one. Indeed, the set $FHC(T)$ has been proven to be always meager (\textit{i.e.}, contained in the complement of a residual set) \cite[Corollary 19]{BayRuz} which seems to indicate that even for two operators the existence of common frequently hypercyclic vectors is a rare phenomenon. Yet, despite a first glimpse into this topic in the case of two multiples of the backward shift by Bayart and Grivaux \cite{Baygrifrequentlyhcop}, to our knowledge no example of a couple of operators without common frequently hypercyclic vectors has been exhibited yet. This leads us to asking the following question.

\begin{quest}\label{Q2op}
	Is it possible to provide with an example of two operators sharing no common frequently hypercyclic vectors?
\end{quest}

Moreover, in contrast to the hypercyclic case, it turns out that the frequently hypercyclic multiples $\lambda B$, $\lambda \in \Lambda \subset (0,+\infty)$, of the backward shift on $\ell^2(\N)$ have no common frequently hypercyclic vectors as soon as $\Lambda$ is uncountable \cite[Theorem 4.5]{Baygrifrequentlyhcop}. 
This result has a straightforward extension to any $T\in \LL(X)$ instead of $B$ \cite[Proposition 6.4]{BayIMRN}. Thus the first question that arises is the following

\begin{quest}\label{Qmult}
	Can one exhibit non-trivial necessary and/or sufficient conditions to common frequent hypercyclicity for multiples of a single operator?
\end{quest}

Anyway, let us mention that the previously mentioned group approach to common hypercyclicity perfectly fits to frequent hypercyclicity.
For example, Bayart and Matheron proved that $FHC(\lambda T)=FHC(T)$ for any $\lambda \in \mathbb{T}$, obtaining a \emph{frequent hypercyclicity version} of Le\'on-M\"uller's result \cite[Theorem 6.28]{bm}. This approach has been pursued further in \cite{BayIMRN} (see also \cite{ConMulPer}) and led to several nice results of common frequent hypercyclicity for families of operators forming strongly continuous groups or semigroups (like translation operators on $H(\C^d)$ and composition operators induced by non-constant Heisenberg translations on the Hardy space of the Siegel half-space).
All in all, except when actions by strongly continuous groups or semigroups are involved, so far no general criteria for common frequent hypercyclicity are known. In particular, the Baire Category approach being ineffective, one may wonder if a constructive approach could still lead somewhere.

\begin{quest}\label{Qcrit}
	Can we give constructive criteria for common frequent hypercyclicity?
\end{quest}

\medskip

In this paper we aim to contribute to the study of common frequent hypercyclicity by exploring three directions that all take their roots in the constructive Frequent Hypercyclicity (or Universality) Criterion. Indeed, we begin by following a constructive approach to the problem of the existence of common frequently hypercyclic vectors. Our first result is a criterion of common frequent universality (Theorem \ref{thmgeneral}) which is a natural strengthening of the \emph{Frequent Universality Criterion} given in \cite{BonGE} (and of the classical Frequent Hypercyclicity Criterion \cite{Baygrifrequentlyhcop}) giving an answer to Question \ref{Qcrit}. We state it for $F$-spaces, \emph{i.e.}, for complete and metrizable topological vector spaces. Let us recall that by definition, a Fréchet space is a locally convex $F$-space (see e.g., \cite{Rudin}).

\begin{thm}\label{thmgeneral}Let $X$ be a $F$-space, $Y$ a separable $F$-space and $\mathcal{T}_i=(T_{i,n})_{n\in \N}$, $i\in\N$, be countably many sequences of continuous linear operators from $X$ to $Y$. We assume that there exist a dense subset $Y_0$ of $Y$, mappings $S_{i,n}:Y_0\to X$, $i,n\in\N$, and a real number $c>1$ such that for every $y\in Y_0$,
	
	\begin{enumerate}
		\item \label{m1} the series $\sum_{0\leq n\leq m}T_{i,m}(S_{i,m-n}(y))$ converges unconditionally, uniformly for $m\in \N$ and $i\in \N$;
		\item \label{m2} the series $\sum_{n\geq 0}T_{i,m}(S_{i,m+n}(y))$ converges unconditionally, uniformly for $m\in \N$ and $i\in \N$;
		\item \label{m3} the series $\sum_{n\geq (c-1)m}T_{i,m}(S_{j,m+n}(y))$ converges unconditionally, uniformly for $m\in \N$ and $i\neq j\in \N$;
		\item \label{m4} the series $\sum_{\frac{c-1}{c}m\leq n\leq m}T_{i,m}(S_{j,m-n}(y))$ converges unconditionally, uniformly for $m\in \N$ and $i\neq j\in \N$;
		\item \label{m5} the series $\sum_{n\geq 0}S_{i,n}(y)$ converges unconditionally, uniformly for $i\in \N$;
		\item \label{m6} the sequence $(T_{i,n}(S_{i,n}(y)))_{n\in\N}$ converges to $y$, uniformly for $i\in \N$.
	\end{enumerate}
	Then there exists a vector $x\in X$ which is frequently universal for every $\mathcal{T}_i$, $i\in \N$.
\end{thm}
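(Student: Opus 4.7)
The plan is to carry over the constructive approach of the classical Frequent Universality Criterion to produce a single vector $x$ that is simultaneously frequently universal for every $\mathcal{T}_i$. First, fix a countable dense subset $(y_\ell)_{\ell \in \N}$ of $Y_0$, and seek $x$ of the form
\[
x = \sum_{(i, \ell) \in \N^2} \sum_{m \in A_{i, \ell}} S_{i, m}(y_\ell)
\]
for a carefully chosen family of pairwise disjoint sets $A_{i, \ell} \subset \N$, each of positive lower density. A suitable rescaling of the $y_\ell$'s (or, equivalently, a weighted diagonal enumeration) will combine with (5) to make the series converge unconditionally in $X$.

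The sets $A_{i, \ell}$ must satisfy two kinds of separation conditions: a \emph{linear separation} within each fixed $i$ (any two distinct elements of $\bigcup_\ell A_{i, \ell}$ are far apart, at a distance depending on the indices involved), and a \emph{multiplicative $c$-separation} across different $i$'s: if $N \in A_{i, k}$ and $m \in A_{j, \ell}$ with $i \neq j$, then $m \notin (N/c, cN)$. Producing such sets with $\dlow(A_{i,\ell}) > 0$ for every pair $(i, \ell)$ crucially uses $c > 1$ and is achieved by a multi-scale scheme in the spirit of the constructions used for common hypercyclicity.

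With such sets in hand, to check that $x$ is frequently universal for $\mathcal{T}_{i_0}$, fix $i_0, k_0 \in \N$ and for $N \in A_{i_0, k_0}$ write
\[
T_{i_0, N}(x) = T_{i_0, N}(S_{i_0, N}(y_{k_0})) + \sum_{(i, \ell, m) \neq (i_0, k_0, N)} T_{i_0, N}(S_{i, m}(y_\ell)).
\]
The main term converges to $y_{k_0}$ by (6). The tail splits into four groups: same $i = i_0$ and $m < N$, controlled by (1); same $i$ and $m > N$, controlled by (2); different $i \neq i_0$ and $m \leq N/c$, controlled by (4); different $i$ and $m \geq cN$, controlled by (3). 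The linear separation pushes the same-$i$ terms deep into the unconditional tails of (1)--(2), while the multiplicative separation forbids any different-$i$ term from landing in the bad zone $(N/c, cN)$ uncovered by (3)--(4). Consequently $T_{i_0, N}(x)$ can be forced arbitrarily close to $y_{k_0}$ for $N \in A_{i_0, k_0}$ large enough, and since $\dlow(A_{i_0, k_0}) > 0$ and $(y_\ell)$ is dense, $x \in \mathcal{F}\UU(\mathcal{T}_{i_0})$ for every $i_0$.

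The main obstacle is the combinatorial construction of the $A_{i, \ell}$. The multiplicative $c$-separation across different $i$'s is in genuine tension with positive lower density: an elementary averaging argument shows that two subsets of $\N$ whose lower densities both exceed $1/c^2$ cannot avoid each other's multiplicative $c$-zones. The $\dlow(A_{i, \ell})$ must therefore be chosen small (depending on $(i, \ell)$), and the $A_{i, \ell}$ stratified on geometrically separated scales, with an enumeration of the pairs $(i, \ell)$ ensuring each appears with positive frequency while respecting the multiplicative gaps demanded by (3)--(4). This delicate multi-scale arrangement, essentially exploiting $c > 1$, is the technical heart of the proof and the genuine novelty compared with the classical Frequent Universality Criterion.
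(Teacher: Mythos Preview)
Your plan is correct and follows essentially the same route as the paper: the paper also builds $x=\sum_{i,p}\sum_{n\in E_p(i)}S_{i,n}(y_p)$ from a family of positive-lower-density sets $E_p(i)$ enjoying both a linear separation $|n-m|\ge \max(N_p(i),N_q(j))$ and a multiplicative separation $n\ge cm$ whenever $n>m$ lie in distinct sets, then splits $T_{j,m}(x)$ exactly into your four groups and controls them via (1)--(4). One small point worth tightening: the linear separation is needed not only within a fixed $i$ but also across different $i$'s, since (3) and (4) are \emph{unconditional-convergence} hypotheses and you must push the cross terms into their tails as well; the paper's lemma provides this simultaneously, and your multi-scale scheme on geometrically separated blocks will do so too, but your write-up should state it explicitly.
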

In particular, one may notice that each $\mathcal{T}_i$, $i\in \N$, satisfies (1), (2), (5) and (6) if and only if it satisfies the Frequent Universality Criterion given in \cite{BonGE} .
With the help of this new criterion, we are then able to get necessary or sufficient conditions on a subset $\Lambda$ of $\C$ for the set $\bigcap_{\lambda \in \Lambda}FHC(\lambda B)$ to be non-empty, when $X$ is a Banach space and $T \in \LL(X)$ giving answers to Question \ref{Qmult}. For example, we will get the following:
\begin{thm*}Let $B$ be the backward shift on $\ell^2(\N)$ and let $\Lambda \subset \C$. The set $\bigcap_{\lambda \in \Lambda}FHC(\lambda B)$ is non-empty if and only if the set $\{|\lambda|:\,\lambda \in \Lambda\}$ is a countable relatively compact non-empty subset of $(1,+\infty)$.
\end{thm*}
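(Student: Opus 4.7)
The overall plan is to first reduce to positive real multiples of $B$ via the Le\'on--M\"uller-type result, then handle the two directions separately: sufficiency through Theorem~\ref{thmgeneral}, and necessity through a quantitative comparison of return-set densities.

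\textbf{Reduction and sufficiency.} By the frequent-hypercyclic Le\'on--M\"uller theorem \cite[Theorem 6.28]{bm}, $FHC(\lambda B) = FHC(|\lambda| B)$ for every $\lambda \in \C$, so it suffices to characterise when $\bigcap_{r \in R} FHC(rB) \neq \emptyset$, where $R := \{|\lambda| : \lambda \in \Lambda\}$. When $R$ is countable with $a := \inf R > 1$ and $b := \sup R < \infty$, I would enumerate $R = \{r_i\}_{i \in \N}$ and apply Theorem~\ref{thmgeneral} with $Y_0$ the finitely supported sequences, $T_{i,n} = (r_i B)^n$ and $S_{i,n} := r_i^{-n} F^n$, where $F$ is the forward shift (then $T_{i,n} \circ S_{i,n} = \mathrm{Id}$ on $Y_0$, so condition (6) is automatic). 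Picking $c > \log b / \log a$ so that $b a^{-c} < 1$, the identity $T_{i,m}(S_{j,p}(e_k)) = r_i^m r_j^{-p} e_{k+p-m}$ (valid when $k+p \geq m$, zero otherwise) reduces each condition to a geometric-series estimate: (1) and (4) are finite sums since the non-zero terms require $n \leq k$; (2) and (5) are controlled uniformly by $\sum a^{-n}$; (3) is dominated by $(ba^{-c})^m/(1-a^{-1})$, tending to $0$ uniformly in $i \neq j$ thanks to the choice of $c$. The criterion then delivers $x \in \bigcap_i FHC(r_i B)$.

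\textbf{Necessity: the easy bookkeeping.} Now assume $x \in \bigcap_{r \in R} FHC(rB)$. Since each $rB$ must be frequently hypercyclic on $\ell^2(\N)$ we have $R \subset (1, +\infty)$. Testing against $e_0$ with tolerance $1/4$, the return set $E_r := \{m : \|(rB)^m x - e_0\|_{\ell^2} < 1/4\}$ has positive lower density $\delta_r$, and expanding the norm yields, for every $m \in E_r$, $|x_m| \in (3/(4 r^m), 5/(4 r^m))$ and $|x_\ell| < 1/(4 r^m)$ for all $\ell \geq m+1$. The value-windows at position $m$ for two distinct $r, s$ are disjoint once $m$ is large enough, so $(E_r)_{r \in R}$ is pairwise eventually disjoint; super-additivity of $\dlow$ on disjoint unions combined with $\dlow \leq 1$ gives $\sum_{r \in R} \delta_r \leq 1$, whence $R$ is at most countable.

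\textbf{Main obstacle: the two-sided bound.} The delicate step is the relative compactness of $R$ in $(1, +\infty)$. Fix $r > s$ in $R$ and set $\rho := \log r / \log s > 1$. For $m \in E_r$ and $k \in E_s$ with $k > m$, combining $|x_k| < 1/(4 r^m)$ (from $m \in E_r$) with $|x_k| > 3/(4 s^k)$ (from $k \in E_s$) forces $s^k > 3 r^m$, hence $k > m \rho + O(1)$; thus $E_s \cap (m, m\rho] = \emptyset$ for every $m \in E_r$. If one had $\rho > 1/\delta_s$, the lower-density estimate would give $|E_s \cap (m, m\rho]| \geq (\delta_s - \eta) m \rho - m - O(1) > 0$ for $m$ large, a contradiction; hence $\rho \leq 1/\delta_s$ and $r \leq s^{1/\delta_s}$. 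Fixing any $s_0 \in R$, this yields $\sup R \leq s_0^{1/\delta_{s_0}} < +\infty$. The symmetric bookkeeping for $r < s$ replaces the forbidden window by $E_s \cap [m/\rho, m) = \emptyset$ for $m \in E_r$ (now $\rho = \log s / \log r$) and produces $r \geq s^{\delta_s}$, giving $\inf R \geq s_0^{\delta_{s_0}} > 1$. Together with the previous steps this closes the equivalence.
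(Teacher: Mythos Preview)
Your proposal is correct. The sufficiency direction matches the paper's approach: both reduce to positive moduli via Le\'on--M\"uller and then feed Theorem~\ref{thmgeneral} with $T_{i,n}=(r_iB)^n$, $S_{i,n}=r_i^{-n}F^n$; the paper packages the verification as Lemma~\ref{lemmeutilecounta}/Theorem~\ref{thmmult}, but the computation with $c>\log b/\log a$ is the same.

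The necessity direction, however, is genuinely different. The paper cites Bayart \cite{BayIMRN} for countability and proves relative compactness in $(1,+\infty)$ via Proposition~\ref{optthmmult}, an operator-theoretic argument valid for \emph{any} bounded $T$: one compares $\Vert T^{n_k}x\Vert$ with $\Vert T^{m_k}x\Vert$ through $\Vert T^{m_k-n_k}\Vert$ and invokes the spectral radius formula to force $\limsup n_k/m_k=0$. Your route is entirely coordinate-based and specific to $B$ on $\ell^2$: from $m\in E_r$ you read off $|x_m|\asymp r^{-m}$ and $|x_\ell|\lesssim r^{-m}$ for $\ell>m$, which (i) makes the $E_r$ pairwise eventually disjoint, giving countability directly by super-additivity of $\dlow$, and (ii) forces the gap $E_s\cap(m,m\rho]=\emptyset$ (and symmetrically $E_s\cap[m/\rho,m)=\emptyset$), from which a density count bounds $\rho=\log r/\log s$ by $1/\delta_s$. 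This yields the explicit two-sided bound $s_0^{\delta_{s_0}}\le r\le s_0^{1/\delta_{s_0}}$ for any reference $s_0\in R$. Your argument is more elementary and self-contained, and it produces quantitative bounds that the paper's spectral-radius argument does not; the trade-off is that the paper's version applies verbatim to arbitrary $T\in\LL(X)$ on a Banach space, while yours exploits the explicit action of $B$ on basis vectors.
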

Actually, this theorem is valid for more general classes of (unilateral) weighted shifts on $\ell^2(\N)$, see Corollary \ref{CorB_w}. For any operator $T \in \LL(X)$, sufficient or necessary conditions on $\Lambda$ are given, involving e.g., the spectral radius of $T$, for $\Lambda$-multiples of $T$ to share frequent hypercyclic vectors. In full generality, our sufficient condition coincides with the assumption of a criterion of \emph{common hypercyclicity} given by Bayart and Matheron \cite[Proposition 4.2]{BayMathIndiana}. Our general criterion of common frequent universality is also applied to countable families of weighted shifts, differential operators or adjoint of multiplication operators (which may not be multiples of a single operator). In passing, we produce two frequently hypercyclic weighted shifts \textit{without} common frequently hypercyclic vectors answering Question \ref{Q2op}.

\medskip

Our second direction of research is motivated by the recent exhibition of a new constructive criterion for frequent hypercyclicity, based on the periodic points of the operator, given by Grivaux, Matheron and Menet \cite{GMM}. The assumptions of this criterion turn out to be more general than that of the classical Frequent Hypercyclicity Criterion, but much less easy to check with usual operators
On the other hand, Menet introduced a new class of operators, the so-called operators of $C$-type \cite{MenetTransactions}, conceived as a very rich source of counter-examples to several difficult problems, to which their new criterion for frequent hypercyclicity is very well adapted. This new criterion being more general, giving a criterion for common frequent hypercyclicity also based on the periodic points appears then as a natural task to supplement our answer to Question \ref{Qcrit}. 
Such a  criterion is given by Theorem \ref{thmFHCGMM}. Then, we illustrate how it can be applied to classes of operators of $C$-type.

\medskip

Our last direction of research is based on a recent result of Ernst and Mouze who proved \cite{ErMo,ErMo2} that any operator satisfying the usual Frequent Universality Criterion in fact enjoys a stronger form of frequent universality related to \emph{generalized lower densities} $\underline{d}_{\alpha}$ indexed by sequences of non-negative real numbers $\alpha$ satisfying suitable conditions, and first studied by Freedman and Sember \cite{Freedman}. They generalize the usual lower density in the sense that the latter coincides with any generalized lower density associated to a constant sequence $(a,a,a,\ldots)$, for any $a>0$. Moreover, if $\alpha \lesssim \beta$ (meaning $\alpha_k/\beta_k$ is eventually non-increasing), then $\dlowg{\beta}(E)\leq\dlowg{\alpha}(E)$ for every $E\subset \N$. The relation $\lesssim$ thus allows oneself to define scales of generalized lower densities.
So, every admissible $\alpha$ gives rise to a notion of $\alpha$-frequent universality generalizing the classical frequent hypercyclicity and, roughly speaking, the faster the sequence $\alpha$ grows, the stronger the associated notion of $\alpha$-frequent hypercyclicity is.
On the one hand, in \cite{ErMo, ErMo2} the authors proved that no operator can be $\alpha$-frequently hypercyclic for 
$\alpha=(e^k)_{k\geq 1}$ or growing faster. On the other hand, one of their main results states that any operator $T\in \LL(X)$ which satisfies the Frequent Universality Criterion is not only frequently hypercyclic but even  $\alpha$-frequently universal whenever there exists $s\geq 2$ such that $\alpha\lesssim \mathcal{D}^s$, where 
$\mathcal{D}^s:=(\exp(k/(\log_{(s)}(k))))_{k\geq k_0}$ for some $k_0\geq1$ depending on $s$, and $\log_{(s)}=\log \circ \log \circ \ldots \circ \log$, $\log$ appearing $s$ times. In short, this illustrates that one can approach the limiting growth as close as desired. 
In view of the topic of the paper, one may be interested in the existence of common frequent hypercyclic vectors in two different senses: the usual one, \textit{i.e.} the existence of $\alpha$-frequently hypercyclic vectors that are common to several operators; one may also think, for one given operator, of the existence of vectors which are $\alpha$-frequently hypercyclic for several sequences $\alpha$. These considerations lead us to two natural questions. For $T\in \LL(X)$ we denote by $FHC_{\alpha}(T)$ the set of all $\alpha$-frequently hypercyclic vectors for $T$.

\begin{quest}\label{Qalpha}
	Let $A$ denote the set of sequences $\alpha$ for which there exists $s\geq 2$ such that $\alpha \lesssim \mathcal{D}^s$ and let $T\in \LL(X)$.
		\begin{enumerate}
			\item Let $\Lambda \subset (0,+\infty)$ and $B\subset A$ be non-trivial. Under what condition do we have $\bigcap_{(\lambda,\beta) \in \Lambda \times B}FHC_{\beta}(\lambda T)\neq \emptyset?$\label{Qalpha1}
			\item If $T$ satisfies the Frequent Hypercyclicity Criterion, is the set $\bigcap_{\alpha \in A}FHC_{\alpha}(T)$ non-empty?\label{Qalpha2}
		\end{enumerate}
\end{quest}

We will give a positive answer to the second question (Proposition \ref{prop_common_dens}) and show that the first one has a strongly negative answer if $\Lambda$ is any non-trivial subset of $(0,+\infty)$ and even when $B$ is reduced to a single generalized density which grows faster than $(e^{\log (k)\log_{(s)}(k)})_{k\geq k_0}$ for some positive integer $s$ (Theorem \ref{propRomu}). To illustrate that the growth condition that appears in the previous answer is somehow optimal, we should mention that $FHC_{\beta}(T)=FHC(T)$ whenever $\beta$ has a growth at most polynomial (\textit{i.e.}, $\beta \lesssim (k^r)_{k\geq 1}$ for some $r> -1$) \cite[Lemma 2.10]{ErMo}. Moreover, combined with our first common frequent hypercyclicity criterion, this also gives a positive answer to Question \ref{Qalpha} (\ref{Qalpha1}) for some non-trivial $\Lambda$ and the set $B$ of sequences with at most polynomial growth.

\medskip

The paper is organized as follows. Section \ref{Sec2} is devoted to our first general criterion of common frequent universality and to some developments in various directions. In Section \ref{sec-per}, we focus on our second criterion for common frequent hypercyclicity involving periodic points. Then,  in Section \ref{Sec-alpha}, we turn ot the study of common $\alpha$-frequent hypercyclicity in both senses detailed above. Finally, we conclude the paper by a brief evocation of a possible exploration of common frequent hypercyclicity from an ergodic point of view in Section \ref{Sec-ergodic}.

\section{Common frequent universality for countable families of operators}\label{Sec2}

\subsection{A general criterion}

The main purpose of this section is to prove Theorem~\ref{thmgeneral} and to derive some corollaries. Since we will be working with $F$-spaces in this section, the notation $\Vert \cdot \Vert$ will stand for any $F$-norm defining the topology of the $F$-space. 

We first recall the definition of uniform unconditional convergence that is needed to fully understand the hypotheses of Theorem \ref{thmgeneral}.

\begin{defi} Let $\Lambda$ be a set. We say that the series $\sum_{n\in\N} x_{\lambda ,n}$, $\lambda \in \Lambda$ in $X$ converges unconditionally uniformly for $\lambda \in \Lambda$ if, for every $\varepsilon>0$, there is some $N\in \N$ such that for any finite set $F\subset \{N,N+1,\ldots\}$, one has
	\[
	\left\Vert \sum_{n\in F}x_{\lambda ,n}\right\Vert<\varepsilon
	\]
for every $\lambda \in \Lambda$.
\end{defi}

For the proof of Theorem \ref{thmgeneral}, we will make use of the following refinement of \cite[Lemma 6.19]{bm} and of ideas developed in \cite{BayRuz}.

\begin{lemme}\label{lemme-refi}For every $K>1$ and every countable family $(N_p(i))_{p\in \N}$, $i\in \N$, of increasing sequences of positive integers, there exists a countable family $(E_p(i))_{p\in \N}$, $i\in \N$, of sequences of subsets of $\N$ with positive lower density, such that for every $(p,i),(q,j)\in \N^2$ and every $(n,m)\in E_p(i)\times E_q(j)$,
	\begin{enumerate}
		\item $\min(E_p(i))\geq N_p(i)$;\label{Lemme-refi1}
		\item if $n\neq m$, then $|n-m|\ge \max(N_p(i),N_q(j))$;\label{Lemme-refi2}
		\item if $(p,i)\neq (q,j)$ and $n>m$, then $n\ge Km$.\label{Lemme-refi3}
	\end{enumerate}
\end{lemme}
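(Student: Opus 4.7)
The plan is to reduce the statement to its analogue indexed by a single sequence, namely \cite[Lemma 6.19]{bm}, by means of a bijective re-indexing of the countable set $\N\times\N$. The doubly indexed nature of the family $(N_p(i))_{p,i}$ is purely cosmetic in the conclusions, since the three required properties depend only on the values of $N_p(i)$ attached to each index pair, not on the matrix structure in $(p,i)$.

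First I would fix any bijection $\phi:\N\to\N\times\N$, $k\mapsto (p_k,i_k)$. To convert the doubly indexed data into a single non-decreasing scalar sequence, set
\[
M_k := \max_{0\leq \ell\leq k} N_{p_\ell}(i_\ell),\qquad k\in\N,
\]
which satisfies $M_k\geq N_{p_k}(i_k)$ for every $k$. Applying the single-index version of the refinement lemma (\cite[Lemma 6.19]{bm}) to $(M_k)_{k\in\N}$ and to the constant $K>1$ produces sets $\tilde E_k\subset \N$ of positive lower density enjoying: $\min\tilde E_k\geq M_k$; the additive separation $|n-m|\geq \max(M_k,M_\ell)$ for distinct $n\in\tilde E_k$ and $m\in\tilde E_\ell$; and the multiplicative separation $n\geq Km$ whenever $k\neq \ell$, $n\in\tilde E_k$, $m\in\tilde E_\ell$ and $n>m$. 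I would then define $E_{p_k}(i_k):=\tilde E_k$.

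With this definition the three conclusions transfer directly. Property (\ref{Lemme-refi1}) follows from $\min\tilde E_k\geq M_k\geq N_{p_k}(i_k)$. For property (\ref{Lemme-refi2}), two distinct $n\in E_{p_k}(i_k)$ and $m\in E_{p_\ell}(i_\ell)$ obey $|n-m|\geq \max(M_k,M_\ell)\geq \max(N_{p_k}(i_k),N_{p_\ell}(i_\ell))$. For property (\ref{Lemme-refi3}), the injectivity of $\phi$ ensures that $(p_k,i_k)\neq (p_\ell,i_\ell)$ forces $k\neq \ell$, so the multiplicative spacing built into the $\tilde E_k$'s applies verbatim.

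The only real work is thus packed into the single-index refinement, and I expect this is where the ideas borrowed from \cite{BayRuz} enter. A natural scheme would be to partition $\N$ into consecutive geometric blocks of ratio $K$, to allocate the blocks to the indices $k\in\N$ by a fair cycling through $\N$ arranged so that the set of blocks assigned to any given $k$ has positive lower density, and inside each block assigned to $k$ to select an arithmetic progression of step comparable to $M_k$ starting above $M_k$. The geometric blocking secures (\ref{Lemme-refi3}) automatically; the choice of step size secures (\ref{Lemme-refi2}); and the fair cycling controls the lower density of each $\tilde E_k$. The main obstacle is the bookkeeping underlying this fair cycling, since the sizes $M_k$ may grow arbitrarily fast and one must nonetheless guarantee that each $\tilde E_k$ retains positive lower density — precisely the delicate balance that the BayRuz-type arguments are designed to handle.
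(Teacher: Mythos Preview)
Your reduction from the doubly indexed family to a singly indexed one is perfectly valid and, as you say, purely cosmetic; the paper could equally well have stated the lemma for a single countable index set. The difficulty is elsewhere.

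The genuine gap is your invocation of \cite[Lemma 6.19]{bm}. That lemma delivers only the additive separation (properties \eqref{Lemme-refi1} and \eqref{Lemme-refi2}); it does \emph{not} yield the multiplicative separation $n\geq Km$ of property \eqref{Lemme-refi3}. Indeed, the paper introduces the present lemma precisely as ``a refinement of \cite[Lemma 6.19]{bm}'', and the refinement \emph{is} condition \eqref{Lemme-refi3}. So after your re-indexing you are left with exactly the statement to be proved (in its single-index form), and no off-the-shelf lemma to appeal to.

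Your closing sketch is, however, essentially the correct construction and matches the paper's proof. The paper fixes $a>1$ and $\varepsilon>0$ with $\frac{1-2\varepsilon}{1+2\varepsilon}a>K$, partitions $\N$ into pairwise disjoint bounded-gap sets $A_p(i)$, and sets
\[
E_p(i)=\bigcup_{u\in A_p(i)}\big([(1-\varepsilon)a^u,(1+\varepsilon)a^u]\cap N_p(i)\N\big).
\]
The geometric spacing of the intervals $[(1-\varepsilon)a^u,(1+\varepsilon)a^u]$ forces \eqref{Lemme-refi3}; the step $N_p(i)$ together with the disjointness of the enlarged intervals $[(1-2\varepsilon)a^u,(1+2\varepsilon)a^u]$ gives \eqref{Lemme-refi2}; and the bounded-gap allocation guarantees positive lower density. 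This is exactly the ``geometric blocks / fair cycling / arithmetic progression'' scheme you outlined, but it must be carried out explicitly rather than attributed to \cite[Lemma 6.19]{bm}.
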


\begin{proof}
	Let $K>1$ and for every $i\in \N$, let $(N_p(i))_{p\in \N}$ be an increasing sequence of positive integers. For every $i\in\N$, let us denote by $(A_p(i))_{p\in \N}$ a sequence of pairwise disjoint subsets of $\N$ with bounded gaps.
	We fix two real numbers $0<\varepsilon<1/2$ and $a>1$ satisfying
	\begin{equation}\label{eqcondia}
	\frac{1-2\varepsilon}{1+2\varepsilon}a>K.
	\end{equation}
	For $0<\eta<1$, let us set $I_{u}^{\eta}=[(1-\eta)a^u,(1+\eta)a^u]$, $u\in\N$. Then, for every $(p, i) \in \N^2$, we define  
	\[
	E_p(i)=\bigcup_{u\in A_p(i)}\left(I_{u}^{\varepsilon}\cap (N_{p}(i)\N)\right).
	\]
	By definition, \eqref{Lemme-refi1} clearly holds and (\ref{Lemme-refi2}) is satisfied whenever $(p,i)=(q,j)$. To prove that \eqref{Lemme-refi2} also holds for any $(p,i)\neq (q,j)$, we first remark that for every $u\in A_p(i)$, the inclusion
	\[
	I_{u}^{\varepsilon}+[-N_p(i),N_p(i)]\subseteq I_{u}^{2\varepsilon}
	\]
	is equivalent to the inequality $N_p(i)\leq \varepsilon a^u$. From now on we assume, up to removing finitely many elements from each set $A_p(i)$, that the previous inclusion holds for any $(p,i)\in \N^2$ and any $u\in A_p(i)$.
	
	Now, we observe that $I_{u}^{2\varepsilon}\cap I_{v}^{2\varepsilon}=\emptyset$ for every $u\neq v$. Indeed, one may check that if $u>v$, $I_{u}^{2\varepsilon}$ and $I_{v}^{2\varepsilon}$ are disjoint if and only if
	\[
	\frac{1-2\varepsilon}{1+2\varepsilon}a^{u-v}>1
	\]
	which holds by \eqref{eqcondia}. Altogether we deduce that the sets $E_p(i)$, $p,i\in \N$, are disjoint and that (\ref{Lemme-refi2}) is satisfied.
	
	To check that (\ref{Lemme-refi3}) also holds, observe first that \eqref{eqcondia} implies that $K(1+\varepsilon)<(1-\varepsilon)a$. Let $(p, i), (q, j) \in \N^2$, $n\in E_p(i)$ and $m\in E_q(j)$ such that $n > m$. Then, there exist $u\in A_p(i)$ and $v\in A_q(j)$ with $u>v$ so that:
	\[
	\begin{cases}
	(1-\varepsilon)a^u\leq n\leq (1+\varepsilon)a^u\\
	(1-\varepsilon)a^v\leq m\leq (1+\varepsilon)a^v.
	\end{cases}
	\]
	Thus, we have
	\[
	Km\leq K(1+\varepsilon)a^v<(1-\varepsilon)a^{v+1}\leq (1-\varepsilon)a^u\leq n.
	\]
	This proves (\ref{Lemme-refi3}).
	
	Finally, it remains to prove that each set $E_p(i)$ has positive lower density. Let $p,i\in\N$ and $(u_k)_{k\in\N}$ be an enumeration of the set $A_p(i)$ and let $M$ be the maximal size of a gap in $A_p(i)$. Then,
	\begin{align*}
	\underline{d}(E_p(i))& \geq\liminf_{k\to\infty}\frac{\text{card}(E_p(i)\cap [0,\lceil(1+\varepsilon)a^{u_k}\rceil])}{\lceil (1+\varepsilon)a^{u_{k+1}}\rceil}\\
	&\geq \liminf_{k\to\infty}\left(\frac{2\varepsilon a^{u_k}}{2N_p(i)}-2\right)\frac{1}{a^{u_{k+1}} + 1}\\
	&\geq \liminf_{k\to\infty}\left(\frac{\varepsilon a^{u_k}}{N_p(i)}-2\right)\frac{1}{a^{u_{k}+M} + 1}\\
	&=\frac{\varepsilon}{N_p(i)a^M + 1}>0 .
	\end{align*}
	This ends the proof of the lemma.
\end{proof}

We are now ready to prove Theorem \ref{thmgeneral}.

\begin{proof}[Proof of Theorem \ref{thmgeneral}] Within the proof, the notation $\Vert \cdot \Vert$ will be indifferently used to denote an $F$-norm defining the topologies of $X$ or $Y$. Since $Y$ is separable, we can assume that $Y_0=\{y_0,y_1,\ldots\}$. Let $(\varepsilon_p)_{p\in \N}$ be a decreasing sequence of positive real numbers such that $\sum _{p\geq0} \varepsilon _p <1$ and $p\varepsilon _p\to 0$ as $p\to \infty$. We also fix an increasing sequence $(J_p)_{p\in\N}$ such that $\sum_{i\geq J_p}\varepsilon_i <\varepsilon_p$. The assumptions of the theorem imply the existence of a sequence $(N_p(i))_{i,p\in \N}$, increasing with respect to $p\in\N$ such that for every $i,p\in \N$, every finite set $F\subset \{N_p(i), N_p(i) +1,\ldots\}$, every $m\in \N$, every $q\in\{0,\ldots,p\}$, every $k\in \N$, every $l\neq k\in \N$, and every $N\geq N_p(i)$,
\begin{multicols}{2}
	\begin{enumerate}[(i)]
		\item \label{eq1} $\displaystyle{\big\Vert \sum_{\substack{n\in F\\n< m}}T_{k,m}(S_{k,m-n}(y_q))\big\Vert < \varepsilon_p}$;
		\item \label{eq2} $\displaystyle{\big\Vert \sum_{n\in F}T_{k,m}(S_{k,m+n}(y_q))\big\Vert < \varepsilon_p}$;
		\item \label{eq5} $\displaystyle{\big\Vert \sum_{\substack{n\in F\\n\geq (c-1)m}}T_{k,m}(S_{l,m+n}(y_q))\big\Vert < \varepsilon_p\varepsilon_i}$;
		\item \label{eq5bis} $\displaystyle{\big\Vert \sum_{\substack{n\in F\\n\geq (c-1)m}}T_{k,m}(S_{l,m+n}(y_q))\big\Vert < \varepsilon_{J_p}\varepsilon_p}$;
		\item \label{eq4} $\displaystyle{\big\Vert \sum_{\substack{n\in F\\\frac{c-1}{c}m\leq n\leq m}}T_{k,m}(S_{l,m-n}(y_q))\big\Vert < \varepsilon_p\varepsilon_i}$;
		\item \label{eq4bis} $\displaystyle{\big\Vert \sum_{\substack{n\in F\\\frac{c-1}{c}m\leq n\leq m}}T_{k,m}(S_{l,m-n}(y_q))\big\Vert < \varepsilon_{J_p}\varepsilon_p}$;
		\item \label{eq3} $\displaystyle{\big\Vert \sum_{n\in F}S_{k,n}(y_q)\big\Vert < \varepsilon_p\varepsilon_i}$;
		\item \label{eq6} $\big\Vert T_{k,N}(S_{k,N}(y_p)) - y_p \big\Vert < \varepsilon_p$.
	\end{enumerate}
\end{multicols}
Let $(E_p(i))_{i,p\in \N}$ be a sequence of sets given by Lemma \ref{lemme-refi} applied to the sequence $(N_p(i))_{i,p\in\N}$ and to $K=c$. We put
\[
x=\sum_{i\in \N}\sum_{p\in \N}\sum_{n\in E_{p}(i)}S_{i,n}(y_p).
\]
One easily checks that $x \in X$. Indeed, since for every $p,i\in\N$,  $\min(E_{p}(i))\geq N_p(i)$, \eqref{eq3} gives
\[
\sum_{i\in \N}\sum_{p\in \N}\bigg\Vert \sum_{n\in E_{p}(i)}S_{i,n}(y_p)\bigg\Vert < \sum_{i\in \N}\sum_{p\in \N}\varepsilon_p\varepsilon_i < \infty.
\]
Note that $x$ is even unconditionally convergent. Our goal is now to prove that $x$ is a frequently universal vector for each sequence $(T_{i,n})_{n \in\N}$, $i\in \N$. We fix $j \in \N$. Let $(r_q)_{q\in\N}$ be a sequence of positive real numbers with $r_q \to 0$ as $q\to \infty$, to be chosen later. Since the sets $E_p(i)$, $i,p\in \N$, have positive lower density, it is sufficient to prove that
\begin{equation}\label{eqbut}
\Vert T_{j,m}(x) - y_q\Vert <r_q \text{ for every }q\in\N \text{ and every }m\in E_q(j).
\end{equation}

Let us then fix $q\in \N$ and $m\in E_q(j)$. Using that $E_p(i)\cap E_q(j) =\emptyset$ if $(i,p)\neq (j,q)$ and that $x$ is unconditionally convergent in $X$, we can decompose $T_{j,m}(x)$ as follows:
\[
T_{j,m}(x)=T_{j,m}(S_{j,m}(y_q))+\overbrace{\sum_{p\in \N}\sum_{\substack{n\in E_p(j)\\n\neq m}}T_{j,m}(S_{j,n}(y_p))}^{A_m}+\overbrace{\sum_{\substack{i\in \N\\i\neq j}}\sum_{p\in \N}\sum_{\substack{n\in E_p(i)\\n\neq m}}T_{j,m}(S_{i,n}(y_p))}^{B_{m}}.
\]
First, since $m\geq N_q(j)$, \eqref{eq6} gives
\begin{equation}\label{term1}\left\Vert T_{j,m}(S_{j,m}(y_q))-y_q\right\Vert < \varepsilon_q.
\end{equation}

We next estimate $\Vert A_m \Vert$:
\[
\Vert A_m \Vert \leq \sum_{p\in\N}\left(\bigg\Vert\sum_{\substack{n\in E_p(j)\\n<m}} T_{j,m}(S_{j,m-(m-n)}(y_p))\bigg\Vert + \bigg\Vert\sum_{\substack{n\in E_p(j)\\n> m}} T_{j,m}(S_{j,m+(n-m)}(y_p))\bigg\Vert\right).
\]
Given that $|n-m|\geq \max (N_p(j),N_q(j))$ for any $n\in E_p(j)$, $n\neq m$, \eqref{eq1} and \eqref{eq2} yield
\begin{equation}\label{term2}\Vert A_m \Vert < 2\sum_{p < q}\varepsilon _q +2 \sum_{p \geq q}\varepsilon _p =: r_{1,q}.
\end{equation}

We now turn to estimating $\Vert B_{m} \Vert$. Again, by unconditional convergence of the series, we have
\[
\Vert B_{m} \Vert \leq \overbrace{\sum_{p\in\N}\sum_{\substack{i\in \N\\i\neq j}}\bigg\Vert\sum_{\substack{n\in E_p(i)\\n<m}} T_{j,m}(S_{i,m-(m-n)}(y_p))\bigg\Vert}^{B_m^1} + \overbrace{\sum_{p\in\N}\sum_{\substack{i\in \N\\i\neq j}}\bigg\Vert\sum_{\substack{n\in E_p(i)\\n> m}} T_{j,m}(S_{i,m+(n-m)}(y_p))\bigg\Vert}^{B_m^2}.
\]
We deal first with $B_m^2$. We have
\begin{align}\label{Bm20}
B_m^2 & \leq \sum_{p \geq q}\sum_{\substack{i\in \N\\i\neq j}}\bigg\Vert\sum_{\substack{n\in E_p(i)\\n> m}} T_{j,m}(S_{i,m+(n-m)}(y_p))\bigg\Vert \nonumber\\
& + \sum _{p < q}\left(\sum_{\substack{i < J_q\\i\neq j}}\bigg\Vert\sum_{\substack{n\in E_p(i)\\n> m}} T_{j,m}(S_{i,m+(n-m)}(y_p))\bigg\Vert+\sum_{\substack{i \geq J_q\\i\neq j}}\bigg\Vert\sum_{\substack{n\in E_p(i)\\n> m}} T_{j,m}(S_{i,m+(n-m)}(y_p))\bigg\Vert\right).
\end{align}
We recall that Lemma \ref{lemme-refi} was applied to $K=c$. So, for $n\in E_p(i)$ with $(i,p)\neq (j,q)$, we have $|n-m|\geq \max(N_p(i),N_q(j))$. Moreover, $n>m$ implies $n\geq cm$, hence $n-m \geq (c-1)m$. In particular, $n-m\geq \max(N_p(i),(c-1)m)$. It follows from \eqref{eq5} that
\begin{equation}\label{Bm2term1}
\sum_{p \geq q}\sum_{\substack{i\in \N\\i\neq j}}\bigg\Vert\sum_{\substack{n\in E_p(i)\\n> m}} T_{j,m}(S_{i,m+(n-m)}(y_p))\bigg\Vert \leq \sum_{p \geq q}\sum_{i\in\N} \varepsilon_i\varepsilon_p\leq \sum_{p \geq q}\varepsilon_p
\end{equation}
and that
\begin{equation}\label{Bm2term22}
\sum_{p < q}\sum_{\substack{i \geq J_q\\i\neq j}}\bigg\Vert\sum_{\substack{n\in E_p(i)\\n> m}} T_{j,m}(S_{i,m+(n-m)}(y_p))\bigg\Vert \leq \sum_{p < q}\varepsilon_p\sum_{i \geq J_q} \varepsilon_i\leq q\varepsilon_q.
\end{equation}
In the last inequality, we use that $0 < \varepsilon_q < 1$ and the fact that $\sum_{i\geq J_q} \varepsilon_i \leq \varepsilon_q$. Now, using that $n-m\geq N_q(j)$ for any $n\in E_p(i)$ with $(i,p)\neq (j,q)$, we get from \eqref{eq5bis} that
\begin{equation}\label{Bm2term21}
\sum_{p < q}\sum_{\substack{i < J_q\\i\neq j}}\bigg\Vert\sum_{\substack{n\in E_p(i)\\n> m}} T_{j,m}(S_{i,m+(n-m)}(y_p))\bigg\Vert 
\leq \sum_{p < q}\sum_{i < J_q} \varepsilon_{J_q}\varepsilon_q\leq q\varepsilon_qJ_q\varepsilon_{J_q}.
\end{equation}
Thus, \eqref{Bm20}, \eqref{Bm2term1}, \eqref{Bm2term22} and \eqref{Bm2term21} altogether give
\begin{equation}\label{Bm2} 
B_m^2 \leq \sum_{p \geq q}\varepsilon_p + q\varepsilon_q + q\varepsilon_qJ_q\varepsilon_{J_q}=: r_{2,q}.
\end{equation}

To finish, we consider $B_m^1$. We have
\begin{align}\label{Bm10}
B_m^1 & \leq \sum_{p \geq q}\sum_{\substack{i\in \N\\i\neq j}}\bigg\Vert\sum_{\substack{n\in E_p(i)\\n< m}} T_{j,m}(S_{i,m-(m-n)}(y_p))\bigg\Vert \nonumber\\
& + \sum _{p < q}\left(\sum_{\substack{i < J_q\\i\neq j}}\bigg\Vert\sum_{\substack{n\in E_p(i)\\n< m}} T_{j,m}(S_{i,m-(m-n)}(y_p))\bigg\Vert+\sum_{\substack{i \geq J_q\\i\neq j}}\bigg\Vert\sum_{\substack{n\in E_p(i)\\n< m}} T_{j,m}(S_{i,m-(m-n)}(y_p))\bigg\Vert\right).
\end{align}
For $n\in E_p(i)$ with $(i,p)\neq (j,q)$, we have $|n-m|\geq \max(N_p(i),N_q(j))$. Moreover, $n<m$ gives $n\leq m/c$, hence $\frac{c-1}{c}m \leq m-n \leq m$. So \eqref{eq4} implies
\begin{equation}\label{Bm1term1}
\sum_{p \geq q}\sum_{\substack{i\in \N\\i\neq j}}\bigg\Vert\sum_{\substack{n\in E_p(i)\\n< m}} T_{j,m}(S_{i,m-(m-n)}(y_p))\bigg\Vert 
\leq \sum_{p \geq q}\sum_{i\in\N} \varepsilon_i\varepsilon_p
\leq \sum_{p \geq q}\varepsilon_p
\end{equation}
and
\begin{equation}\label{Bm1term22}
\sum _{p < q}\sum_{\substack{i \geq J_q\\i\neq j}}\bigg\Vert\sum_{\substack{n\in E_p(i)\\n< m}} T_{j,m}(S_{i,m-(m-n)}(y_p))\bigg\Vert 
\leq \sum_{p < q}\varepsilon_p\sum_{i \geq J_q} \varepsilon_i\leq q\varepsilon_q.
\end{equation}
Now, since $m - n \geq N_q(j)$, for $n \in E_p(i)$, $(p, i) \in \N^2$, \eqref{eq4bis} yields
\begin{equation}\label{Bm1term21}
\sum_{p < q}\sum_{\substack{i < J_q\\i\neq j}}\bigg\Vert\sum_{\substack{n\in E_p(i)\\n< m}} T_{j,m}(S_{i,m-(m-n)}(y_p))\bigg\Vert 
\leq \sum_{p < q}\sum_{i < J_q} \varepsilon_{J_q}\varepsilon_q
\leq q\varepsilon_qJ_q\varepsilon_{J_q}.
\end{equation}
Thus, \eqref{Bm10}, \eqref{Bm1term1}, \eqref{Bm1term22} and \eqref{Bm1term21} imply $B_m^1 \leq r_{2,q}$ (see \eqref{Bm2} for the definition of $r_{2,q}$).

The previous inequality, together with \eqref{term1}, \eqref{term2} and \eqref{Bm2} give \eqref{eqbut}, setting $r_q=\varepsilon_q + r_{1,q} + 2r_{2,q}$ which, by assumption, tends to $0$ as $q\to +\infty$.
\end{proof}

It will often happen that in the assumptions of Theorem \ref{thmgeneral}, $S_i$ are self-mappings of $X_0$ and right inverses of the operators $T_i$ on $X_0$. It is in particular the case if $T$ satisfies the so-called Frequent Hypercyclicity Criterion. Because we will refer to it several times in the paper, we recall its statement below.

\begin{thm}[Frequent Hypercyclicity Criterion (see Theorem 6.18 in \cite{bm})]\label{FHCcriterion}Let $X$ be a separable $F$-space and $T$ a continuous linear operator on $X$. We assume that there exist a dense subset $X_0$ of $X$ and a mapping $S:X_0\to X_0$ such that for every $x\in X_0$,
	\begin{enumerate}
		\item \label{FHCm11} the series $\sum_{n\geq 0}T^n(x)$ and $\sum_{n\geq 0}S^n(x)$ converge unconditionally;
		\item \label{FHCm21} the equality $T^n(S^n(x))=x$ holds.
	\end{enumerate}
	Then $T$ is frequently hypercyclic.
\end{thm}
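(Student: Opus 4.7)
The plan is to deduce this criterion directly from Theorem \ref{thmgeneral} as the degenerate case of a single repeated sequence. Concretely, I would set $Y=X$, $Y_0=X_0$, and, for every $i\in\N$, take $\mathcal{T}_i=(T_{i,n})_{n\in\N}$ with $T_{i,n}=T^n$ and $S_{i,n}=S^n$; the latter is well-defined as a mapping $X_0\to X$ (indeed, into $X_0$) precisely because $S(X_0)\subset X_0$. Any fixed constant $c>1$, say $c=2$, will do.

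The crucial observation is that, since $T^k\circ S^k=\mathrm{Id}_{X_0}$ for every $k\ge 0$ and $S$ stabilizes $X_0$, for every $y\in X_0$, every $0\le n\le m$ and all $i,j\in\N$ one has $T_{i,m}(S_{j,m-n}(y))=T^m(S^{m-n}(y))=T^n(y)$, while for every $n\ge 0$ one has $T_{i,m}(S_{j,m+n}(y))=T^m(S^{m+n}(y))=S^n(y)$. In particular, all of these expressions are independent of $m$, $i$ and $j$. Consequently, conditions (1) and (4) of Theorem \ref{thmgeneral} both reduce to the unconditional convergence of $\sum_{n\ge 0}T^n(y)$, conditions (2), (3) and (5) all reduce to that of $\sum_{n\ge 0}S^n(y)$, and the required uniformity in $m$, $i$ and $j$ holds automatically since none of these tail series involves those parameters. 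Finally, condition (6) is the exact equality $T^n(S^n(y))=y$. Each of these facts is a direct consequence of hypotheses (1) and (2) of the Frequent Hypercyclicity Criterion.

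Theorem \ref{thmgeneral} then provides a vector $x\in X$ that is frequently universal for every $\mathcal{T}_i$; since all the $\mathcal{T}_i$ coincide with the single sequence $(T^n)_{n\in\N}$, this means precisely that $x\in FHC(T)$, and hence $T$ is frequently hypercyclic. The only point that requires genuine attention is verifying the telescoping identities $T^m\circ S^{m-n}=T^n$ and $T^m\circ S^{m+n}=S^n$ on $X_0$; both rely on iterating $T\circ S=\mathrm{Id}$ inside $X_0$, which is exactly why the hypothesis $S:X_0\to X_0$ (rather than merely $S:X_0\to X$) is indispensable. Once these identities are in hand, the criterion becomes a tautological corollary of Theorem \ref{thmgeneral}, which is arguably the cleanest way to present the argument given what has just been established in this section.
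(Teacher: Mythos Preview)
Your proposal is correct and matches the paper's own viewpoint: the paper does not give an independent proof of this cited result, but remarks right after Corollary~\ref{CFHC} that the latter ``coincides with the Frequent Hypercyclicity Criterion when the family $(T_i)_{i\in\N}$ is reduced to a single operator,'' which is exactly the specialization you carry out. Your verification of the telescoping identities $T^m\circ S^{m-n}=T^n$ and $T^m\circ S^{m+n}=S^n$ on $X_0$, and the observation that the resulting series no longer depend on $m,i,j$ (so the uniformity is automatic), is precisely what is needed to make this reduction rigorous.
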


In this context, Theorem \ref{thmgeneral} reads as follows.

\begin{coro}\label{CFHC}Let $X$ be a separable $F$-space and let $(T_{i})_{i\in \N}$ be countably many continuous linear operators on $X$. We assume that there exist a dense subset $X_0$ of $X$, mappings $S_{i}:X_0\to X_0$, $i\in\N$, and a real number $c>1$ such that for every $x\in X_0$,
	
	\begin{enumerate}
		\item \label{m11} the series $\sum_{n\geq 0}T_i^n(x)$ and $\sum_{n\geq 0}S_i^n(x)$ converge unconditionally, uniformly for $i\in \N$;
		\item \label{m31} the series $\sum_{n\geq (c-1)m}T_i^m(S_j^{m+n}(x))$ converges unconditionally, uniformly for $m\in \N$ and $i\neq j\in \N$;
		\item \label{m41} the series $\sum_{\frac{c-1}{c}m\leq n\leq m}T_i^m(S_j^{m-n}(x))$ converges unconditionally, uniformly for $m\in \N$ and $i\neq j\in \N$;
		\item \label{m21} the sequence $T_i(S_i(x))=x$ for every $i\in \N$.
	\end{enumerate}
	Then there exists a common frequently hypercyclic vector for the family $(T_{i})_{i\in \N}$.
\end{coro}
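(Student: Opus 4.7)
My approach is to derive this corollary as a direct specialization of Theorem \ref{thmgeneral}: I would take $Y = X$, $Y_0 = X_0$, $\mathcal{T}_i := (T_i^n)_{n \in \N}$, and $S_{i,n} := S_i^n$, and then verify one by one the six convergence hypotheses of Theorem \ref{thmgeneral} using the four assumptions of the corollary.

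The algebraic driver for the translation is the identity $T_i S_i = \mathrm{id}$ on $X_0$ given by (\ref{m21}), which, iterated, yields $T_i^k S_i^k = \mathrm{id}$ for every $k$, and hence, for $0 \leq n \leq m$ and $n \geq 0$ respectively,
\[
T_i^m \circ S_i^{m-n} = T_i^n \quad \text{and} \quad T_i^m \circ S_i^{m+n} = S_i^n \quad \text{on } X_0.
\]
These two identities collapse the ``same-index'' ($i = j$) sums appearing in hypotheses (1) and (2) of Theorem \ref{thmgeneral} into partial sums of $\sum_n T_i^n(x)$ and $\sum_n S_i^n(x)$ respectively, and hypothesis (\ref{m11}) provides uniform unconditional convergence of both series in $i$; the resulting uniformity in $m$ is automatic because after the reduction the summands no longer involve $m$ beyond the range of summation.

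With this in hand, the remaining conditions of Theorem \ref{thmgeneral} fall into place: (3) and (4) are literally (\ref{m31}) and (\ref{m41}); (5) is exactly the $S$-half of (\ref{m11}); and (6) is trivial since $T_i^n S_i^n(x) = x$ for every $n$ and every $x \in X_0$, uniformly (indeed equally) in $i$. Applying Theorem \ref{thmgeneral} then produces a vector $x \in X$ which is frequently universal for each sequence $(T_i^n)_{n \in \N}$, that is, a common frequently hypercyclic vector for the family $(T_i)_{i \in \N}$.

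No real obstacle arises, as every step is a mechanical specialization; the only point worth spelling out carefully is the standard fact that uniform unconditional convergence of a series is preserved under restriction of the summation range, which is what guarantees that the tails produced by the two identities above still enjoy the uniform unconditional convergence demanded by the corresponding hypotheses of Theorem \ref{thmgeneral}.
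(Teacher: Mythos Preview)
Your proposal is correct and is exactly the approach the paper has in mind: the corollary is stated immediately after Theorem \ref{thmgeneral} with the words ``In this context, Theorem \ref{thmgeneral} reads as follows,'' and no separate proof is given. Your verification that the identities $T_i^m S_i^{m-n} = T_i^n$ and $T_i^m S_i^{m+n} = S_i^n$ reduce hypotheses (1), (2), (5), (6) of Theorem \ref{thmgeneral} to assumption (\ref{m11}) and the relation (\ref{m21}), while (3) and (4) are literally (\ref{m31}) and (\ref{m41}), is precisely the intended specialization.
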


Note that Corollary \ref{CFHC} coincides with the Frequent Hypercyclicity Criterion when the family $(T_{i})_{i\in \N}$ is reduced to a single operator. Moreover, observe that the second part of (1) is a consequence of (2) by taking $m=0$.

\medskip

These two results apply in many situations and are sometimes sharp. This is described in the next paragraphs.

\subsection{Common frequent hypercyclicity for multiples of a single operator}\label{subsec-multiple}

Let us first give necessary conditions for the existence of common frequently hypercyclic vectors for multiples of a given operator.
\subsubsection{Necessary conditions}\label{necessaryfirst}

In this paragraph, we assume that $X$ is a Banach space. We recall that if $T$ is a bounded linear operator on $X$ and $\Lambda \subset (0,+\infty)$, then for the family $(\lambda T)_{\lambda \in \Lambda}$ to have a common frequently hypercyclic vector, $\Lambda$ has to be countable \cite[Proposition 6.4]{BayIMRN}. The following proposition shows that $\Lambda$ must also satisfy two other non-trivial conditions. We will denote by $r(T)$ the spectral radius of $T$ and we recall the spectral radius formula \cite{Rudin}:
\[
r(T)=\underset{n \geq 1}{\inf}\Vert T^n\Vert ^{1/n}=\underset{n \to \infty}{\lim}\Vert T^n\Vert ^{1/n}.
\]

\begin{prop}\label{optthmmult}Let $X$ be a Banach space, $T$ a bounded linear operator on $X$ and $\Lambda \subset (0,+\infty)$ be a set with at least two elements. 
If $\Lambda$ is unbounded or $1/r(T) \geq \inf(\Lambda)$, then
	\[
	\bigcap_{\lambda \in \Lambda}FHC(\lambda T)=\emptyset.
	\]
\end{prop}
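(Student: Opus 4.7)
The plan is to argue by contradiction: suppose that some $x\in \bigcap_{\lambda\in\Lambda}FHC(\lambda T)$ exists. First, I fix a unit vector $y\in X$ and set $V=B(y,1/4)$, a fixed nonempty open set independent of $\lambda$. By the FHC property, for every $\lambda\in\Lambda$ the return-time set $B_\lambda:=\{n\in\N:(\lambda T)^n x\in V\}$ has positive lower density, and membership in $B_\lambda$ is equivalent to
\[
\frac{3}{4\lambda^n}\le \|T^n x\|\le \frac{5}{4\lambda^n},
\]
so that $B_\lambda$ pins $\|T^n x\|$ at the scale $\lambda^{-n}$.

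The condition $\inf(\Lambda)<1/r(T)$ is handled immediately by Gelfand's formula: any $\lambda\in\Lambda$ with $\lambda r(T)<1$ yields $\|(\lambda T)^n\|^{1/n}\to \lambda r(T)<1$, so every orbit of $\lambda T$ tends to $0$, $\lambda T$ is not hypercyclic, and $FHC(\lambda T)=\emptyset$. Hence I may restrict to $\inf(\Lambda)\geq 1/r(T)$, and the remaining task is to show that each of $\inf(\Lambda)=1/r(T)$ and $\sup(\Lambda)=+\infty$ leads to a contradiction.

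The technical heart is a comparison inequality between two return sets. For any $\lambda_S<\lambda_L$ in $\Lambda$ with $\lambda_S>1/r(T)$, I would apply the submultiplicativity $\|T^n x\|\leq \|T^{n-m}\|\,\|T^m x\|$ with $n\in B_{\lambda_S}$, $m\in B_{\lambda_L}$, $m\leq n$, combine with the two-sided bounds above, and use the Gelfand asymptotic $\log\|T^k\|=k\log r(T)+o(k)$. A few lines of logarithmic algebra yield
\[
\frac{m}{n}\leq \frac{\log(\lambda_S r(T))}{\log(\lambda_L r(T))}+o(1)\qquad\text{as }n\to\infty\text{ along }B_{\lambda_S},
\]
and passing to the liminf along the infinite subsequence $n\in B_{\lambda_S}$ gives the density bound
\[
\dlow(B_{\lambda_L})\leq \frac{\log(\lambda_S r(T))}{\log(\lambda_L r(T))}.
\]

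With this estimate in hand, the case $\inf(\Lambda)=1/r(T)$ concludes at once: fix any $\lambda_L\in\Lambda$ strictly greater than $1/r(T)$ (which exists since $|\Lambda|\geq 2$) and let $\lambda_S\in\Lambda$ approach $1/r(T)$ from above; the numerator tends to $0$ while the denominator stays bounded below, so $\dlow(B_{\lambda_L})=0$, the desired contradiction. The case $\sup(\Lambda)=+\infty$ is handled by a parallel symmetric argument: applying submultiplicativity in the other time direction ($\|T^m x\|\leq \|T^{m-n}\|\,\|T^n x\|$ for $m>n$) shows that each $n\in B_{\lambda_L}$ forces $B_{\lambda_S}$ to avoid a forbidden interval $(n,n(1+\gamma))$ with $\gamma=\log(\lambda_L/\lambda_S)/\log(\lambda_S r(T))$. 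Fixing $\lambda_S\in\Lambda$ and letting $\lambda_L$ run to $+\infty$ in $\Lambda$ makes $\gamma$ arbitrarily large, and combined with $\dlow(B_{\lambda_L})>0$ these forbidden intervals eventually cover a cofinite portion of $\N$, contradicting $\dlow(B_{\lambda_S})>0$. The main obstacle is the bookkeeping of the $o(k)$ error coming from Gelfand's formula through the logarithmic inequalities so that the quotient $m/n$ concentrates cleanly at $\log(\lambda_S r(T))/\log(\lambda_L r(T))$; once this is done, both cases reduce to straightforward geometric density arguments.
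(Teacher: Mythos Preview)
Your approach is essentially the paper's: both arguments pick two return sets for different multiples, compare them via $\|T^{n}x\|\le\|T^{n-m}\|\,\|T^{m}x\|$, feed in the spectral radius asymptotic for $\|T^{k}\|$, and extract the key ratio bound $m/n\lesssim \log(\lambda_S r(T))/\log(\lambda_L r(T))$ which kills the lower density of the return set for the larger multiple. The paper uses a ``near $0$'' set $\mathcal N_0$ for one multiple and a ``near $x_0$'' set $\mathcal N_k$ for the other (one-sided bounds on $\|T^nx\|$), whereas you use the same open set $V=B(y,1/4)$ for both (two-sided bounds); this is a cosmetic difference.

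Two points deserve tightening. First, in the case $\inf(\Lambda)=1/r(T)$ you write ``let $\lambda_S\in\Lambda$ approach $1/r(T)$ from above''. This covers only the situation where $1/r(T)$ is an accumulation point of $\Lambda$; if instead $1/r(T)\in\Lambda$ is isolated in $\Lambda$ (which can happen when $\inf\Lambda$ is attained), you must run the comparison with $\lambda_S=1/r(T)$ itself. The paper treats this sub-case separately; your bound in fact still gives $\dlow(B_{\lambda_L})\le 0$ when $\lambda_S r(T)=1$, so the fix is easy, but it should be said.

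Second, your unbounded-case conclusion---``combined with $\dlow(B_{\lambda_L})>0$ these forbidden intervals eventually cover a cofinite portion of $\N$''---has a hidden dependence: $\dlow(B_{\lambda_L})>0$ only gives a ratio bound $n_{k+1}/n_k\le R(\lambda_L)$ on consecutive elements of $B_{\lambda_L}$, and you need $1+\gamma(\lambda_L)>R(\lambda_L)$ for the intervals to coalesce, yet $R(\lambda_L)$ is not controlled as $\lambda_L\to\infty$. The clean fix is to drop the cofinite-cover claim entirely: for each large $\lambda_L$ pick a single $m\in B_{\lambda_L}$; the forbidden interval $(m,(1+\gamma)m)$ already gives
\[
\frac{|B_{\lambda_S}\cap[0,(1+\gamma)m]|}{(1+\gamma)m}\le \frac{m+1}{(1+\gamma)m}\to\frac{1}{1+\gamma},
\]
and letting $\lambda_L\to\infty$ (so $\gamma\to\infty$) yields $\dlow(B_{\lambda_S})=0$. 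This is exactly the density estimate the paper would obtain in its symmetric treatment of the unbounded case.
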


\begin{proof}First of all, let us observe that if $\lambda < 1/r(T)$, then $\lambda T$ is not hypercyclic. We only prove the case where $1/r(T)=\inf(\Lambda)$, the case $\Lambda$ unbounded being treated very similarly. Let us first assume that $1/r(T)$ is an accumulation point of $\Lambda$. Upon taking a subsequence, we can suppose that $\Lambda =(\lambda_k)_{k\in \N}$ is decreasing to $1/r(T)$. We may and shall also assume that there exists $x\in X$ which is hypercyclic for all $\lambda_kT$, $k\in \N$. We fix $x_0\in X \setminus \{0\}$ with $\Vert x_0 \Vert =1$ and denote by $\mathcal{N}_k$, $k\geq 0$, the sets respectively given by
	\[
	\mathcal{N}_0:=\bigg\{n\in \N:\, \Vert \lambda_0^nT^n( x)   \Vert < 1\bigg\}\quad 
	\text{and} \quad \mathcal{N}_k:=\bigg\{m\in \N:\, \Vert \lambda_k^mT^m (x)  - x_0\Vert < \frac{1}{2}\bigg\},\, k\geq 1.
	\]
	By assumption, each $\mathcal{N}_k$, $k\geq 0$, is infinite. So there exist two increasing sequences $(m_k)_{k\geq 1}$ and $(n_k)_{k \geq 1}$ such that for every $k \geq 1,\ m_k \in \mathcal{N}_k$ and
\[
 n_k =\max\{n < m_k:\, n \in \mathcal{N}_0\}.
\]
 Then, from the definition of $n_k$, $k\geq 1$, we get
	\begin{equation}\label{eqopt}
	\underline{d}(\mathcal{N}_0) 
	\leq \underset{k \to \infty}{\limsup} \frac{\text{card}(\mathcal{N}_0 \cap \{0,\ldots,m_k-1\})}{m_k}
	\leq \underset{k \to \infty}{\limsup}\frac{n_k}{m_k}.
	\end{equation}
	Now, by construction, we have for any $k\geq 1$,
	\[
	\Vert T^{n_k}(x) \Vert < \lambda_0^{-n_k}\quad 
	\text{and} 
	\quad \frac{\lambda_k^{-m_k}}{2} < \Vert T^{m_k}(x) \Vert \leq \Vert T^{m_k-n_k}\Vert \Vert T^{n_k}(x)\Vert.
	\]
	It follows for any $k \geq 1$,
	\[
	\frac{\lambda_0^{n_k}}{\lambda_k^{m_k}} < 2\Vert T^{m_k-n_k}\Vert,
	\]
	whence
	\begin{equation}\label{equtileopt}
	\left(\frac{\lambda_0}{\lambda_k}\right)^{n_k}\leq 2\lambda_k^{m_k-n_k}\Vert T^{m_k-n_k} \Vert \leq 2(\lambda_0 \Vert T \Vert )^{m_k-n_k}.
	\end{equation}
	Since $(\lambda_k)_{k\in \N}$ is decreasing and $n_k\to +\infty$, we first deduce from the last inequality that $m_k-n_k\to +\infty$. This gives $r(T)=\lim _{k \to \infty} \Vert T ^{m_k-n_k}\Vert ^{1/(m_k-n_k)}$. We also derive from \eqref{equtileopt} the following:
	\[
	\left(\frac{\lambda_0}{\lambda_k}\right)^{n_k/m_k}\leq 2^{1/m_k}\lambda_k^{1-n_k/m_k} \Vert T^{m_k-n_k} \Vert ^{1/m_k} \text{ for any } k \geq 1,
	\]
	which implies, using that $m_k\to + \infty$ and $m_k-n_k \to +\infty$,
	\begin{equation*}
	\limsup_{k \to \infty} \frac{n_k}{m_k}
	\leq \frac{1}{\ln(r(T)\lambda_0)} \limsup_{k \to \infty} \left(\ln(\lambda_k\Vert T^{m_k-n_k} \Vert ^{\frac{1}{m_k -n_k}})\right)
	= 0,
	\end{equation*}
	since, by assumption, $(\lambda_k)_{k\in \N}$ is decreasing to $1/r(T)$. This with \eqref{eqopt} shows that $x$ is not frequently hypercyclic for $\lambda_0 T$ when $1/r(T)$ is an accumulation point of $\Lambda$.
	
	Let us deal with the remaining case, \textit{i.e.}, $1/r(T)\in \Lambda$ but $1/r(T)$ is not an accumulation point of $\Lambda$. We will in fact prove the stronger fact that, if $1/r(T)$ and $\lambda$ are distinct and both in $\Lambda$, then $r(T)^{-1}T$ and $\lambda T$ share no frequently hypercyclic vectors. The proof goes along the same lines as above. Let us denote $\mu = 1/r(T)$. Let $\lambda \in \Lambda$ such that $\lambda \neq \mu$. By assumption $\lambda/\mu >1$. We may and shall assume that some $x\in X$ is hypercyclic for $\lambda T$ and $\mu T$ and we set, for some vector $x_0\in X$ with $\Vert x_0\Vert=1$,
	\[
	\mathcal{N}_\lambda:=\bigg\{n\in \N:\, \Vert \lambda^nT^n (x)\Vert < 1\bigg\}\quad 
	\text{and}
	\quad \mathcal{N}_\mu:=\bigg\{m\in \N:\, \Vert \mu^mT^m (x) - x_0\Vert < \frac{1}{2}\bigg\},\, k\geq 1.
	\]
	As above, since these sets are infinite, one can define an increasing sequence of integers $(m_k)_{k\in \N} \subset \mathcal{N}_\mu$, tending to $+\infty$, such that the sequence $(n_k)_{k\in \N}$ defined by
	\[
	n_k:=\max\{n<m_k:\, n\in \mathcal{N}_\lambda\}
	\]
	is increasing. We have $\underline{d}(\mathcal{N}_\lambda)\leq \limsup_{k \to \infty} \frac{n_k}{m_k}$ and, proceeding exactly as in the first part of the proof, $m_k-n_k \to +\infty$ and
	\[
	\left(\frac{\lambda}{\mu}\right)^{n_k} 
	\leq 2 \mu^{m_k-n_k} \Vert T^{m_k-n_k} \Vert ,\quad k\in \N.
	\]
	Therefore
	\[
	\underline{d}(\mathcal{N}_{\lambda})
	\leq \limsup_{k \to \infty} \frac{n_k}{m_k} 
	\leq \frac{1}{\ln(r(T)\lambda)}\limsup_{k \to \infty}\left(\ln(\mu\Vert T^{m_k-n_k} \Vert ^{\frac{1}{m_k -n_k}})\right) 
	= 0,
	\]
	so $x$ is not frequently hypercyclic for $\lambda T$.
\end{proof}

Let us make two remarks.

\begin{rem}\label{remarkpropopt}
	The proof of the previous proposition tells us a bit more than its statement. More precisely, we have shown that, if $\Lambda$ is unbounded or if $1/r(T)\in\overline{\Lambda}$, and if $x\in X$ is a common hypercyclic vector for all $\lambda T$, then it is not \emph{frequently} hypercyclic for any $\lambda T$, $\lambda\ne 1/r(T)$. If, e.g., $T$ is the backward shift $B$ on $\ell^2(\N)$, it is not difficult to see that the set $\bigcap_{\lambda >1} HC(\lambda B)$ is different from the set $HC(\mu B)$ for any $\mu >1$. In fact, by the previous, we have, for $\mu >1$,
\[
FHC(\mu B) \subset HC(\mu B) \setminus \bigcap _{\lambda >1}HC(\lambda B).
\]
		
Another interesting feature of Proposition \ref{optthmmult} (more precisely of the second part of its proof) is that it gives an idea of how to build two frequently hypercyclic operators having no common frequently hypercyclic vectors. This will be detailed later, see Corollary \ref{FHCmaisnoncommun}).
\end{rem}

\begin{rem}\label{remarkpropopt-separ} One can wonder whether non-trivial conditions for common frequent hypercyclicity of families of non-zero real multiples of a single operator remain true for more general families of operators. 

Recall that Bayart proved in \cite{BayIMRN} that a family of multiples of a single operator can admit a common frequently hypercyclic vector only if this family is countable. However, we already know that some uncountable families of operators may have common frequently hypercyclic vectors (e.g., translation operators or composition operators, see \cite{BayIMRN,Baygrifrequentlyhcop}). Moreover, the Le\'on-M\"uller theorem for frequent hypercyclicity (\cite[Theorem 6.28]{bm}), which asserts that $FHC(\lambda T)=FHC(T)$ for any $\lambda \in \C$, $|\lambda|=1$, shows that uncountable families of complex multiples of an operator on a complex separable Banach space may have common frequently hypercyclic vector.

Now, let us focus on the necessary conditions given by Proposition \ref{optthmmult}. Note that requiring the index set to be bounded is equivalent to imposing the family of real numbers $(\Vert \lambda T \Vert)_{\lambda \in \Lambda}$ to be bounded. Besides, the second condition is equivalent to asking to  the family $(r(\lambda T))_{\lambda \in \Lambda}$ to be bounded away from $1$. Therefore, our question can be rephrased as follow: for a given family $(T_i)_{i\in \N}$ of bounded linear operators on $X$ to share common frequently hypercyclic vectors, is it necessary for the families $(\Vert T_i \Vert)_{i\in \N}$ and $(r(T_i))_{i\in \N}$ to be respectively bounded and bounded away from $1$? The answer is no: consider the family $(T^p)_{p\geq 1}$ of the positive iterates of a frequently hypercyclic operator $T$ on $X$. By Ansari's theorem for frequent hypercyclicity, $FHC(T)=FHC(T^p)$ for any $p\geq 1$. However, since a hypercyclic operator cannot be power-bounded, the family $(\Vert T^p \Vert)_{p\geq 1}$ is not bounded. Moreover, if $r(T)=1$, then by the spectral radius formula, $r(T^p)=r(T)^p=1$ for any $p\geq 1$. Note that, if $X$ is complex, by Le\'on-M\"uller's theorem, the multiples $\lambda T$ with $|\lambda|=1$ also have spectral radii equal to $1$ and yet have common frequently hypercyclic vectors. An example of a frequently hypercyclic operator whose spectral radius is one is given in Corollary \ref{FHCmaisnoncommun}.
\end{rem}

Given any bounded linear operator $T$ on $X$ and any $\lambda > 1/r(T)$, there is no reason in general for $\lambda T$ to be frequently hypercyclic and, even if $\lambda T$ is frequently hypercyclic, it may not satisfy the Frequent Hypercyclicity Criterion. In the next paragraph we search for condition on a countable set $\Lambda \subset (0,+\infty)$ for multiples $\lambda T$ of some operator $T$ to have common frequently hypercyclic vectors.

\subsubsection{Sufficient conditions}\label{sufficientmultiple}
Let us fix a separable Fréchet space $X$ and a continuous linear operator $T$ on $X$. We introduce some quantities which will play an important role in the sequel. Given $X_0$ a dense subset of $X$ and a mapping $S:X_0\to X_0$ such that $T(S(x))=x$ for $x\in X_0$, we denote by
\begin{align*}
a_T(X_0,S) & =\inf\Big\{\lambda > 0:\, \sum_{n \geq 0} \frac{S^n}{\lambda^n}(x)\text{ converges unconditionally for all }x\in X_0\Big\}\\
& =\inf\{\lambda> 0:\, (\lambda ^{-n}S^n(x))_{n\in \N}\text{ is bounded for all }x\in X_0\}
\end{align*}
and
\begin{align*}
b_T(X_0,S) & =\sup\Big\{\lambda > 0:\, \sum_{n \geq 0} (\lambda T)^n (x)\text{ converges unconditionally for all }x\in X_0\Big\}\\
& =\sup\{\lambda > 0:\, ((\lambda T)^n(x))_{n\in \N}\text{ is bounded for all }x\in X_0\}.
\end{align*}
When $X$ is a Banach space, one may check that
\[
a_T(X_0,S)=\sup_{x\in X_0}\limsup_{n \to \infty} \Vert S^n(x) \Vert ^{1/n}\quad \text{and} \quad b_T(X_0,S)=\inf_{x\in X_0}\frac{1}{\limsup_{n \to \infty} \Vert T^n(x) \Vert ^{1/n}}.
\]
So, by the spectral radius formula, we have
\begin{equation}\label{defia}
a_T(X_0,S)\geq r(T)^{-1}\quad \text{and} \quad b_T(X_0,S)\geq r(T)^{-1}.
\end{equation}
Note that $b_T(X_0,S)$ may be infinite e.g., if $X_0=\bigcup_{n\geq 0}\ker(T^n)$ is dense in $X$. This is for example the case if $T$ is any weighted backward shift acting on a Fr\'echet space $X$ with an unconditional basis. More specifically, if $T$ is the backward shift $B$ on $\ell^2(\N)$, then $S$ can be taken as the forward shift $F$ and the first inequality in \eqref{defia} is in fact an equality. This yields $a_B(X_0,F)=1/\Vert B \Vert =1$ (see Paragraph \ref{subweightedshifts} for a focus on weighted shifts) and $b_B(X_0,F)=+\infty$.

\medskip

With these notations, a criterion of common hypercyclicity, due to Bayart and Matheron, can be rephrased as follows.

\begin{thm}[Proposition 4.2 in \cite{BayMathIndiana}]\label{BayMathcommon}Let $X$ be a separable Fréchet space and let $T$ be a continuous linear operator on $X$. We assume that there exist $X_0\subset \bigcup_{n\in \N}\ker (T^n)$ and a mapping $S:X_0\to X_0$ such that $X_0$ is dense in $X$ and $T(S(x))=x$ for all $x\in X_0$. Then $\bigcap_{\lambda > a_T(X_0,S)}HC(\lambda T)$ is a dense $G_{\delta}$ subset of $X$.
\end{thm}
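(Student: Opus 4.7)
The plan is a Costakis-Sambarino style Baire category argument, exploiting the $\sigma$-compactness of $(a_T(X_0,S),+\infty)$ and the strong algebraic rigidity provided by the hypothesis $X_0\subset \bigcup_{n\in \N}\ker(T^n)$.

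First, I would write $(a_T(X_0,S),+\infty)=\bigcup_{\ell \in \N}K_\ell$ with $K_\ell=[\alpha_\ell,\beta_\ell]$ an increasing sequence of compact intervals, and fix a countable basis $(U_k)_{k\in \N}$ of the topology of $X$. For $k,N,\ell\in\N$, set
\[
A(k,N,\ell):=\{x\in X:\,\forall \lambda \in K_\ell,\,\exists n\geq N,\,(\lambda T)^n(x)\in U_k\}.
\]
Because the $U_k$'s form a basis, $\bigcap_{k,N,\ell}A(k,N,\ell)=\bigcap_{\lambda >a_T(X_0,S)}HC(\lambda T)$, so it suffices to prove that each $A(k,N,\ell)$ is dense and open. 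Openness follows from a standard compactness argument: given $x_0\in A(k,N,\ell)$ and each $\lambda\in K_\ell$, pick $n_\lambda\geq N$ with $(\lambda T)^{n_\lambda}(x_0)\in U_k$; joint continuity of $(\mu,x)\mapsto (\mu T)^{n_\lambda}(x)$ provides open neighbourhoods $V_\lambda\ni \lambda$ and $W_\lambda \ni x_0$ on which the same membership holds, and a finite subcover of $K_\ell$ yields a neighbourhood of $x_0$ inside $A(k,N,\ell)$.

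For density, fix a non-empty open set $V\subset X$, $y\in X_0$ with a ball $B(y,\eta)\subset U_k$, and $x_0 \in V\cap X_0$. I would pick $\alpha'\in (a_T(X_0,S),\alpha_\ell)$, so that $\|S^n(y)\|\leq C(\alpha')^n$ for every $n$, and let $N_0$ be large enough that $T^{N_0}(y)=0$ and $T^{N_0}(x_0)=0$. Partition $K_\ell$ into $M$ subintervals $J_1,\ldots,J_M$ with representatives $\lambda_i\in J_i$, and integers $n_1<n_2<\cdots<n_M$, then set
\[
x=x_0+\sum_{i=1}^M \lambda_i^{-n_i}S^{n_i}(y).
\]
For each $\lambda\in K_\ell$, choosing $i$ with $\lambda \in J_i$ and using $T(S(z))=z$ on $X_0$, one expands
\[
(\lambda T)^{n_i}(x)=(\lambda T)^{n_i}(x_0)+\Big(\frac{\lambda}{\lambda_i}\Big)^{n_i}y+\sum_{j<i}\lambda^{n_i}\lambda_j^{-n_j}T^{n_i-n_j}(y)+\sum_{j>i}\lambda^{n_i}\lambda_j^{-n_j}S^{n_j-n_i}(y).
\]
The $x_0$ term vanishes once $n_i\geq N_0$; the leading term approximates $y$ whenever $n_i \cdot |J_i|/\lambda_i$ is small; each backward cross term vanishes as soon as $n_i-n_j\geq N_0$; and the forward sum is bounded by $\sum_{j>i}C(\beta_\ell/\alpha')^{n_i}(\alpha'/\alpha_\ell)^{n_j}$, which is arbitrarily small provided $n_j$ grows geometrically with ratio exceeding $\log(\beta_\ell/\alpha')/\log(\alpha_\ell/\alpha')$. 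Finally $\|x-x_0\|\leq \sum_i C(\alpha'/\alpha_\ell)^{n_i}$ keeps $x$ inside $V$ provided $n_1$ is large.

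The main obstacle is the simultaneous balancing of $M$ and $(n_i)$: the partition must be fine enough that $n_i|J_i|$ remains small (so $(\lambda/\lambda_i)^{n_i}\approx 1$ uniformly in $\lambda \in J_i$), while $(n_i)$ must grow quickly enough to annihilate the forward cross sums. The hypothesis $X_0\subset \bigcup_{n\in\N} \ker(T^n)$ is decisive here: it forces the backward sum to vanish \emph{exactly} rather than merely decay, and without it one would have to face terms $\lambda^{n_i}\lambda_j^{-n_j}T^{n_i-n_j}(y)$ for $j<i$ with an apparent $\lambda^{n_i}$ growth that cannot be compensated. Once these constraints are reconciled -- for instance by taking $J_i$ equi-spaced of length of order $1/n_i$ and $(n_i)$ geometric with a ratio dictated by $\alpha',\alpha_\ell,\beta_\ell$ and first term $n_1\geq N$ -- the vector $x$ lies in $V\cap A(k,N,\ell)$, proving density and concluding the proof.
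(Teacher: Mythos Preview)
The paper does not prove this statement: it is quoted verbatim as Proposition~4.2 of Bayart--Matheron \cite{BayMathIndiana} and used as a black box, so there is no in-paper proof to compare against. I can only assess your argument on its own merits.

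Your overall framework---a Costakis--Sambarino style Baire argument, the finite-sum perturbation $x=x_0+\sum_i \lambda_i^{-n_i}S^{n_i}(y)$, and the crucial use of $X_0\subset\bigcup_n\ker(T^n)$ to make the backward cross terms vanish exactly---is correct and is indeed how the result is proved. However, the final balancing you propose does not close. With the crude forward estimate $C(\beta_\ell/\alpha')^{n_i}(\alpha'/\alpha_\ell)^{n_j}$ you are forced, as you note, into geometric growth $n_{i+1}\gtrsim\rho\, n_i$ with $\rho>1$. But then the constraint $|J_i|\lesssim c/n_i$ (needed for $(\lambda/\lambda_i)^{n_i}\approx 1$) gives
\[
\sum_{i=1}^{M}|J_i|\;\le\; c\sum_{i\ge 1}\frac{1}{n_1\rho^{\,i-1}}\;=\;\frac{c}{n_1(1-\rho^{-1})},
\]
which tends to $0$ as $n_1\to\infty$ and hence cannot cover the fixed interval $K_\ell$ once $n_1\ge N$ is prescribed large. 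The requirement $n_1\ge N$ is forced by the definition of $A(k,N,\ell)$ and cannot be dropped, so as written the density step fails.

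The repair is to sharpen the forward bound by exploiting the \emph{ordering} of the partition. Arrange the $J_i$ from left to right and take each $\lambda_i$ to be, say, the right endpoint of $J_i$; then $j>i$ implies $\lambda_j\ge\lambda$ for every $\lambda\in J_i$, and the $j$-th forward term is controlled by
\[
\lambda^{n_i}\lambda_j^{-n_j}\,p\big(S^{n_j-n_i}(y)\big)\;\le\;C\Big(\frac{\lambda}{\lambda_j}\Big)^{n_i}\Big(\frac{\alpha'}{\lambda_j}\Big)^{n_j-n_i}\;\le\;C\Big(\frac{\alpha'}{\alpha_\ell}\Big)^{n_j-n_i}
\]
for each continuous semi-norm $p$. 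Now \emph{arithmetic} growth $n_i=n_1+(i-1)d$ with $d\ge N_0$ large suffices to control both backward and forward cross terms, and the decisive gain is that $\sum_{i=1}^{M}1/n_i\sim d^{-1}\log(n_M/n_1)$ diverges with $M$. Hence intervals of length $|J_i|\approx c/n_i$ do cover $K_\ell$ for some finite $M$, no matter how large $n_1$ is chosen. With this adjustment your scheme goes through.
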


Now let us observe that, by definition, for any $a_T(X_0,S)<\lambda < b_T(X_0,S)$ the family $(\lambda^n T^n)_{n\in \N}$ satisfies the \emph{Frequent Universality Criterion} \cite{BonGE}. So it is natural to wonder under which extra condition the family $\lambda T$, $a_T(X_0,S)<\lambda < b_T(X_0,S)$, has a common frequently hypercyclic vector. In virtue of the necessary conditions given in Paragraph \ref{necessaryfirst}, the following criterion is a quite natural extension of Bayart and Matheron's result.

\begin{thm}\label{thmmult}Let $X$ be a separable Fréchet space and $T$ a continuous linear operator on $X$. We assume that there exist a dense subset $X_0$ of $X$ and a mapping $S:X_0 \to X_0$ such that $T(S(x))=x$ for all $x\in X_0$. If $\Lambda$ is a countable relatively compact non-empty subset of $(a_T(X_0,S),b_T(X_0,S))$, then $\bigcap_{\lambda \in \Lambda}FHC(\lambda T)\neq \emptyset$.
\end{thm}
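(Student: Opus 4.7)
The plan is to apply Corollary \ref{CFHC} to the countable family $T_i := \lambda_i T$ with right inverses $S_i := \lambda_i^{-1} S$ on $X_0$, where $(\lambda_i)_{i \in \N}$ enumerates $\Lambda$ (countable by assumption); the commutation identity $T_i \circ S_i = \mathrm{id}|_{X_0}$ is then immediate. Since $\Lambda$ is relatively compact in $(a_T(X_0,S), b_T(X_0,S))$, its extremes $\alpha := \sup\Lambda$ and $\beta := \inf\Lambda$ also lie in this open interval, so I would fix auxiliary parameters $\mu \in (a_T(X_0,S), \beta)$ and $\nu \in (\alpha, b_T(X_0,S))$.

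By the very definition of $a_T$ and $b_T$, the series $\sum_n \mu^{-n} S^n(x)$ and $\sum_n (\nu T)^n(x)$ converge unconditionally; in particular their terms are bounded for every $x \in X_0$. Combining this with the monotonicity of the $F$-norm in the scalar factor produces, for every $\lambda \in \Lambda$ and every $n \in \N$, bounds of the form
\[
\|\lambda^{-n} S^n(x)\| \leq C_x \, r^n \quad \text{and} \quad \|(\lambda T)^n(x)\| \leq C_x' \, (r')^n,
\]
with $r := \mu/\beta < 1$, $r' := \alpha/\nu < 1$ and constants $C_x,C_x'$ independent of $\lambda$ and $n$. These estimates at once give hypothesis (1) of Corollary \ref{CFHC}, and hypothesis (4) is the commutation identity recorded above.

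For the two cross-hypotheses (2) and (3), I would use the algebraic identities (valid on $X_0$ since $T \circ S = \mathrm{id}|_{X_0}$)
\[
T_i^m(S_j^{m+n}(x)) = \left(\frac{\lambda_i}{\lambda_j}\right)^{\!m} \lambda_j^{-n} S^n(x), \qquad T_i^m(S_j^{m-n}(x)) = \left(\frac{\lambda_i}{\lambda_j}\right)^{\!m-n} (\lambda_i T)^n(x),
\]
combined with $\lambda_i/\lambda_j \leq \alpha/\beta$ and the geometric bounds above. Summing over $n \geq (c-1)m$ in the first case and over $\tfrac{c-1}{c}m \leq n \leq m$ in the second (so that $m-n \leq m/c$) controls the tails by constants times
\[
\left(\tfrac{\alpha}{\beta}\, r^{\,c-1}\right)^{\!m} \quad \text{and} \quad \left(\tfrac{\alpha}{\beta}\,(r')^{c-1}\right)^{\!m/c},
\]
respectively. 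Since $r,r'<1$, each quantity tends to $0$ uniformly in $m$ (and trivially in $i\neq j$) as soon as $c>1$ is chosen large enough so that both $\tfrac{\alpha}{\beta} r^{c-1}<1$ and $\tfrac{\alpha}{\beta}(r')^{c-1}<1$; such a $c$ exists. With it, hypotheses (2) and (3) of Corollary \ref{CFHC} are satisfied, and a common frequently hypercyclic vector is produced in $\bigcap_{\lambda\in\Lambda}FHC(\lambda T)$.

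The only delicate step is spotting the correct factorization of the cross-terms $T_i^m S_j^{m\pm n}$ as a product of a moderate ratio $(\lambda_i/\lambda_j)^{\pm m}$ or $(\lambda_i/\lambda_j)^{\pm(m-n)}$ and a geometrically decaying piece; once this factorization is in place, the strict inclusions $\alpha<b_T(X_0,S)$ and $a_T(X_0,S)<\beta$ are precisely what is needed to make the decay beat the moderate factor for some $c>1$, which is where the assumption that $\Lambda$ is \emph{relatively compactly} contained in $(a_T(X_0,S),b_T(X_0,S))$ is crucially used.
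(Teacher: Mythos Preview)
Your argument is correct and is essentially the paper's own proof: you enumerate $\Lambda$, set $T_i=\lambda_iT$, $S_i=\lambda_i^{-1}S$, and verify the hypotheses of Corollary~\ref{CFHC} via the two factorizations $T_i^mS_j^{m+n}(x)=(\lambda_i/\lambda_j)^m(\lambda_j^{-1}S)^n(x)$ and $T_i^mS_j^{m-n}(x)=(\lambda_i/\lambda_j)^{m-n}(\lambda_iT)^n(x)$; the paper does exactly this (it packages the verification as Lemma~\ref{lemmeutilecounta} and then invokes Theorem~\ref{thmgeneral}, of which Corollary~\ref{CFHC} is the relevant specialization, but the computations and the choice of $c$ are the same).

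One wording issue worth fixing: your appeal to ``monotonicity of the $F$-norm in the scalar factor'' does not by itself give the geometric bounds $\|\lambda^{-n}S^n(x)\|\le C_x r^n$, since an $F$-norm is only subhomogeneous ($\|tx\|\le\|x\|$ for $|t|\le1$) and not positively homogeneous. What you actually need---and what the paper uses in the proof of Lemma~\ref{lemmeutilecounta}---is the homogeneity of the continuous \emph{seminorms} on $X$; this is available because $X$ is a Fr\'echet space and hence locally convex. With that adjustment your estimates go through verbatim.
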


The proof is based on the following lemma, where it is assumed that $E\subset (a,b)$ with 
$a<b$ means $E=\emptyset$.

\begin{lemme}\label{lemmeutilecounta}With the assumptions of Theorem \ref{thmmult}, let $\Lambda$ be a relatively compact subset of $(a_T(X_0,S),b_T(X_0,S))$. Then there exists $c>1$ such that for any $x\in X_0$,
\begin{enumerate}[(i)]
		\item \label{i} the series $\sum_{n\geq 0}(\lambda T)^n(x)$ converges unconditionally, uniformly for $\lambda \in \Lambda$;
		\item \label{ii} the series $\sum _{n\geq 0}\left(\frac{S}{\lambda}\right)^n(x)$ converges unconditionally, uniformly for $\lambda \in \Lambda$;
		\item \label{iii} the series $\sum _{n\geq (c-1)m}(\frac{\lambda}{\mu})^{m}(\frac{S}{\mu})^n(x)$ converges unconditionally, uniformly for $m\in \N$ and $\lambda,\mu \in \Lambda$;
		\item \label{iiii} the series $\sum _{m\geq n\geq \frac{c-1}{c}m}(\frac{\lambda}{\mu})^{m-n}(\lambda T)^n(x)$ converges unconditionally, uniformly for $m\in \N$ and $\lambda,\mu \in \Lambda$.
\end{enumerate}
\end{lemme}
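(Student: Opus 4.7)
The plan is to use the relative compactness of $\Lambda$ inside the open interval $(a_T(X_0,S),b_T(X_0,S))$ to sandwich it between two reference series, and then to recognize every summand appearing in the four series as a bounded scalar multiple of a term of one of those reference series. Concretely, since $\overline{\Lambda}$ is a compact subset of the open interval, I would fix real numbers $\alpha',\alpha,\beta,\beta'$ with
\[
a_T(X_0,S)<\alpha'<\alpha\leq \inf(\Lambda)\leq\sup(\Lambda)\leq\beta<\beta'<b_T(X_0,S).
\]
By the very definitions of $a_T(X_0,S)$ and $b_T(X_0,S)$, the two reference series $\sum_{n\geq 0}(S/\alpha')^n(x)$ and $\sum_{n\geq 0}(\beta' T)^n(x)$ then converge unconditionally for every $x\in X_0$.

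The key general tool is the following elementary fact in $F$-spaces: if $\sum_n y_n$ converges unconditionally and $(a_n^{(\gamma)})_{n\in\N,\,\gamma\in\Gamma}$ is a family of scalars with $\sup_{n,\gamma}|a_n^{(\gamma)}|\leq 1$, then $\sum_n a_n^{(\gamma)} y_n$ converges unconditionally, uniformly in $\gamma\in\Gamma$. This follows from the Cauchy characterization of unconditional convergence in $F$-spaces together with the $F$-norm inequality $\|\beta z\|\leq\|z\|$ for $|\beta|\leq 1$, after splitting each coefficient into its real and imaginary, then positive and negative, parts.

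With these two ingredients in hand, items (i) and (ii) are immediate: one rewrites $(\lambda T)^n(x)=(\lambda/\beta')^n(\beta' T)^n(x)$ and $(S/\lambda)^n(x)=(\alpha'/\lambda)^n(S/\alpha')^n(x)$, and observes that for $\lambda\in\Lambda$ the weights are bounded respectively by $\beta/\beta'<1$ and $\alpha'/\alpha<1$.

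The crux is to select $c>1$ so that items (iii) and (iv) reduce to the same mechanism. For (iii), rewrite the summand as $(\lambda/\mu)^m(\alpha'/\mu)^n(S/\alpha')^n(x)$; its scalar coefficient has modulus at most $(\beta/\alpha)^m(\alpha'/\alpha)^n$, which is $\leq 1$ whenever $n\geq(c-1)m$, provided $c-1\geq \log(\beta/\alpha)/\log(\alpha/\alpha')$. For (iv), rewrite the summand as $(\lambda/\mu)^{m-n}(\lambda/\beta')^n(\beta' T)^n(x)$; in the range $\frac{c-1}{c}m\leq n\leq m$ one has $m-n\leq n/(c-1)$, so the coefficient is bounded by $\bigl[(\beta/\alpha)^{1/(c-1)}(\beta/\beta')\bigr]^n$, which is $\leq 1$ as soon as $c-1\geq \log(\beta/\alpha)/\log(\beta'/\beta)$. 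Choosing $c$ strictly larger than both thresholds produces a single admissible $c>1$, and the general tool above delivers the four required uniform unconditional convergences. The only real obstacle here is balancing the competing geometric growth and decay in (iii) and (iv); everything else is bookkeeping.
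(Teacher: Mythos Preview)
Your proof is correct and follows essentially the same approach as the paper's: rewrite each summand as a scalar of modulus at most $1$ times a term of a fixed unconditionally convergent reference series, then invoke the bounded-multiplier principle. The only cosmetic difference is that you introduce two reference parameters $\alpha'$ and $\beta'$ (one for the $S$-side, one for the $T$-side) and treat all four items explicitly, whereas the paper introduces a single $d\in(a_T(X_0,S),\inf\Lambda)$, details only (ii) and (iii), and leaves (i) and (iv) to the reader; your choice of $c$ for (iii) coincides exactly with the paper's condition $\frac{\sup(\Lambda)}{\inf(\Lambda)}\bigl(\frac{d}{\inf(\Lambda)}\bigr)^{c-1}\leq 1$.
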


\begin{proof}Let us denote by $\Vert \cdot \Vert$ any continuous semi-norm on $X$. For notational simplicity, we shall denote $a=a_T(X_0,S)$ and $b=b_T(X_0,S)$. We only prove \eqref{ii} and \eqref{iii}.
The conditions \eqref{i} and \eqref{iiii} are respectively proved in a similar way.
Let $a<d<\inf(\Lambda)$. To get \eqref{ii}, it is enough to write, for $\lambda \in \Lambda$ and $m \in \N$, $(\frac{S}{\lambda})^n(x)=(\frac{d}{\lambda})^n(\frac{S}{d})^n(x)$, and use that $\frac{d}{\lambda}\leq \frac{d}{\inf(\Lambda)}<1$ and that $(\frac{S}{d})^n(x)$ is bounded for any $x\in X_0$ by some constant independent of $\lambda \in \Lambda$ and $n\in\N$.

To prove \eqref{iii}, let us fix $x\in X_0$. By assumption, the series $\sum_{n\geq 0}\left(\frac{S}{d}\right)^n(x)$ is unconditionally convergent. We also let $c>1$ be such that
\[
\frac{\sup(\Lambda)}{\inf(\Lambda)}\left(\frac{d}{\inf(\Lambda)}\right)^{c-1}\leq 1,
\]
and we write
\[
\sum _{n\geq (c-1)m}\left(\frac{\lambda}{\mu}\right)^{m}\left(\frac{S}{\mu}\right)^n(x)=\sum_{n\geq (c-1)m}\left(\frac{\lambda}{\mu}\left(\frac{d}{\mu}\right)^{c-1}\right)^{m}\left(\frac{d}{\mu}\right)^{n-(c-1)m}\left(\frac{S}{d}\right)^n(x).
\]
Since the quantity
\[
\left(\frac{\lambda}{\mu}\left(\frac{d}{\mu}\right)^{c-1}\right)^{m}\left(\frac{d}{\mu}\right)^{n-(c-1)m}
\]
is bounded by $1$ uniformly for $\lambda,\mu\in \Lambda$, $m\in \N$ and $n\geq (c-1)m$, we get \eqref{iii} by unconditional convergence of the series $\sum_{n\geq 0}\left(\frac{S}{d}\right)^n(x)$.
\end{proof}

Let us now prove Theorem \ref{thmmult}.

\begin{proof}[Proof of Theorem \ref{thmmult}]
It is enough to check that the sequences $((\lambda T)^n)_{n\in\N}$ and $((S/\lambda)^n)_{n\in\N}$, $\lambda \in \Lambda$, satisfy the assumptions \eqref{m1}--\eqref{m6} of Theorem \ref{thmgeneral}. \eqref{m6} is trivial, while \eqref{m1}, \eqref{m2} and \eqref{m5} are direct consequences of \eqref{i} and \eqref{ii} of Lemma \ref{lemmeutilecounta}. Now, \eqref{m3} and \eqref{m4} follow from \eqref{iii} and \eqref{iiii} of Lemma \ref{lemmeutilecounta}, after observing that for any $\lambda\neq \mu \in \Lambda$, $x\in X_0$,
\[
\sum_{n\geq (c-1)m}(\lambda T)^m\left(\frac{S}{\mu}\right)^{m+n}(x)=\sum _{n\geq (c-1)m}\left(\frac{\lambda}{\mu}\right)^{m}\left(\frac{S}{\mu}\right)^n(x)
\]
and
\[
\sum_{\frac{c-1}{c}m \leq n\leq m}(\lambda T)^m\left(\frac{S}{\mu}\right)^{m-n}(x)=\sum _{\frac{c-1}{c}m \leq n\leq m}\left(\frac{\lambda}{\mu}\right)^{m-n}(\lambda T)^n(x).
\]
\end{proof}

A slight modification of the proof of Theorem \ref{thmmult} yields to the following \emph{universal} version.
\begin{prop} Let $X$ be a separable Fréchet space, $T$ a bounded linear operator on $X$, $X_0$ a dense subset of $X$ and a mapping $S:X_0 \to X_0$ such that $T(S(x))=x$ for all $x\in X_0$. Let also $(\lambda_{i,n})_{n\geq 1}$, $i\in \N$, be a countable family of sequences in $(0,+\infty)$. We assume that
\begin{enumerate}
\item there exist $c,d \in (a_T(X_0,S),b_T(X_0,S))$ such that $\lambda_{i,n} \in (c^n,d^n)$ for any $i\in \N$, $n\geq 1$;
\item there exists $C>0$ such that $C^{-1}\lambda_{i,{n+m}} \leq \lambda_{i,n}\lambda_{i,m}\leq C\lambda_{i,{n+m}}$ for any $n,m,i\in \N$.
\end{enumerate}
Then
\[
\bigcap_{i\in \N} \mathcal{F}\mathcal{U}((\lambda_{i,n}T^n)_n)\neq \emptyset.
\]
\end{prop}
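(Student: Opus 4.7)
The plan is to apply Theorem~\ref{thmgeneral} with $Y=X$, $Y_0=X_0$, and the natural choices
\[
T_{i,n}:=\lambda_{i,n}T^n,\qquad S_{i,n}(y):=\lambda_{i,n}^{-1}S^n(y),\qquad i,n\in\N.
\]
Since $S$ maps $X_0$ into itself and $T(S(y))=y$ on $X_0$, one has $T_{i,n}(S_{i,n}(y))=y$ identically, so condition (6) of Theorem~\ref{thmgeneral} holds trivially and only (1)--(5) need be checked. I would first fix auxiliary numbers $e\in(a_T(X_0,S),c)$ and $f\in(d,b_T(X_0,S))$: by definition of $a_T$ and $b_T$, the series $\sum_n (S/e)^n(y)$ and $\sum_n (fT)^n(y)$ converge unconditionally for every $y\in X_0$, and they will serve as dominating majorants throughout.

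Conditions (1), (2) and (5) are then verified using the doubling-type hypothesis (2) of the proposition together with the bound $\lambda_{i,n}\in(c^n,d^n)$. The identity $\lambda_{i,n}^{-1}S^n(y)=\lambda_{i,n}^{-1}e^n(S/e)^n(y)$ has scalar factor $\leq(e/c)^n\leq 1$, which gives (5) at once. For (1) and (2), one writes
\[
T_{i,m}(S_{i,m-n}(y)) = \tfrac{\lambda_{i,m}}{\lambda_{i,m-n}}T^n(y), \qquad T_{i,m}(S_{i,m+n}(y)) = \tfrac{\lambda_{i,m}}{\lambda_{i,m+n}}S^n(y),
\]
and bounds the ratios by $C\lambda_{i,n}\leq Cd^n$ and $C\lambda_{i,n}^{-1}\leq Cc^{-n}$ respectively. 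Rewriting $Cd^nT^n(y)=C(d/f)^n(fT)^n(y)$ and $Cc^{-n}S^n(y)=C(e/c)^n(S/e)^n(y)$ then yields dominating unconditionally convergent series independent of $i$ and $m$.

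The crux of the proof lies in the cross-conditions (3) and (4), where no doubling is available and only the bounds $\lambda_{i,m}\leq d^m$ and $\lambda_{j,\ell}\geq c^\ell$ may be used, giving the weak estimates $\lambda_{i,m}/\lambda_{j,m+n}\leq(d/c)^mc^{-n}$ and $\lambda_{i,m}/\lambda_{j,m-n}\leq(d/c)^mc^n$. To defeat the potentially unbounded prefactor $(d/c)^m$, I would choose the constant $\kappa>1$ of Theorem~\ref{thmgeneral} large enough that
\[
(d/c)(e/c)^{\kappa-1}\leq 1\quad\text{and}\quad (d/c)(c/f)^{(\kappa-1)/\kappa}\leq 1,
\]
which is possible precisely because $e<c<d<f$. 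A direct factorization then shows that, for $n\geq(\kappa-1)m$,
\[
\tfrac{\lambda_{i,m}}{\lambda_{j,m+n}}S^n(y) = \bigl[(d/c)(e/c)^{\kappa-1}\bigr]^m (e/c)^{n-(\kappa-1)m}(S/e)^n(y)
\]
has scalar coefficient in $[0,1]$, so the unconditional convergence of $\sum(S/e)^n(y)$ transfers to (3) uniformly in $i\neq j$ and $m$. The twin factorization
\[
\tfrac{\lambda_{i,m}}{\lambda_{j,m-n}}T^n(y)=\bigl[(d/c)(c/f)^{(\kappa-1)/\kappa}\bigr]^m(c/f)^{n-(\kappa-1)m/\kappa}(fT)^n(y)
\]
handles (4) for $(\kappa-1)m/\kappa\leq n\leq m$.

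The main obstacle is exactly this geometric tradeoff in (3) and (4): the prefactor $(d/c)^m$ is not controlled by the doubling hypothesis and is tamed only by the gap $n\gtrsim m$ supplied by the combinatorial Lemma~\ref{lemme-refi}, combined with the strict inclusion $(c,d)\subset(a_T(X_0,S),b_T(X_0,S))$ which grants the slack $e<c$ and $d<f$. Once (1)--(6) are verified, Theorem~\ref{thmgeneral} produces a vector $x\in X$ that is frequently universal for each sequence $(\lambda_{i,n}T^n)_n$, $i\in\N$, as required.
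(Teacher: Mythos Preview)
Your proof is correct and follows exactly the route the paper intends: the paper remarks that this proposition is ``a slight modification of the proof of Theorem~\ref{thmmult}'', and you have spelled out precisely that modification, verifying conditions (1)--(6) of Theorem~\ref{thmgeneral} via the same auxiliary comparison to the unconditionally convergent series $\sum(S/e)^n(y)$ and $\sum(fT)^n(y)$ used in Lemma~\ref{lemmeutilecounta}. One cosmetic point: in your displayed ``factorizations'' for (3) and (4) the equality sign should really be an inequality on the scalar coefficient (you are bounding $\lambda_{i,m}/\lambda_{j,m+n}$ by $(d/c)^m c^{-n}$, not computing it), but the reasoning that follows is sound.
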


Together with the result of Paragraph \ref{necessaryfirst}, Theorem \ref{thmmult} gives a necessary and sufficient condition on a set $\Lambda \subset (0,+\infty)$ for common frequent hypercyclicity of the family $\lambda T$, $\lambda \in \Lambda$, for any $T$ in a certain subclass of operator acting on a Banach space. Recall that if $\lambda < 1/r(T)$, then $\lambda T$ is not hypercyclic.

\begin{coro}\label{corothm29opt}Let $X$ be a separable Banach space, $T$ be a bounded linear operator on $X$ and 
$\Lambda \subset (0,+\infty)$ with at least two elements. We assume that there exist a dense subset $X_0$ of $X$ and a mapping $S:X_0 \to X_0$ such that $T(S(x))=x$ for all $x\in X_0$. We also suppose that $a_T(S,X_0)=1/r(T)$ and $b_T(S,X_0)=+\infty$. Then, 
	\[
	\bigcap_{\lambda \in \Lambda}FHC(\lambda T)\neq \emptyset
	\]
if and only if $\Lambda$ is countable and relatively compact in $(1/r(T),+\infty)$.
\end{coro}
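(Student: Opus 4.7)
The proof will split into necessity and sufficiency, both of which are essentially direct applications of the tools developed earlier in this subsection.

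For sufficiency, the strategy is to invoke Theorem \ref{thmmult} directly. Under the standing hypotheses $a_T(X_0,S)=1/r(T)$ and $b_T(X_0,S)=+\infty$, the open interval $(a_T(X_0,S),b_T(X_0,S))$ coincides with $(1/r(T),+\infty)$. Hence any countable, relatively compact, non-empty subset $\Lambda$ of $(1/r(T),+\infty)$ satisfies verbatim the hypotheses of Theorem \ref{thmmult}, which immediately gives $\bigcap_{\lambda\in\Lambda}FHC(\lambda T)\neq\emptyset$.

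For necessity, assume that $\bigcap_{\lambda\in\Lambda}FHC(\lambda T)\neq\emptyset$. The first thing I would record is that $\Lambda$ must be countable: this is \cite[Proposition 6.4]{BayIMRN}, quoted at the beginning of Paragraph \ref{necessaryfirst}. It then remains to show that $\Lambda$ is relatively compact in $(1/r(T),+\infty)$. A standard topological observation is that, in the open interval $(1/r(T),+\infty)$, relative compactness is equivalent to the conjunction of $\sup(\Lambda)<+\infty$ and $\inf(\Lambda)>1/r(T)$: indeed, the infimum of $\Lambda$ always belongs to the closure of $\Lambda$ in $\R$, so for this closure to be a compact subset of the open interval one needs $\inf(\Lambda)>1/r(T)$ together with boundedness above. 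Both of these conditions are then forced by Proposition \ref{optthmmult}: in contrapositive form, if $\Lambda$ is unbounded or $\inf(\Lambda)\leq 1/r(T)$, then $\bigcap_{\lambda\in\Lambda}FHC(\lambda T)=\emptyset$, contradicting our assumption.

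The whole argument is really a matter of packaging: the sufficient criterion of Theorem \ref{thmmult} matches the necessary conditions of Proposition \ref{optthmmult} exactly when the equalities $a_T(X_0,S)=1/r(T)$ and $b_T(X_0,S)=+\infty$ hold. There is no genuine obstacle here; the only point worth writing out cleanly is the equivalence between relative compactness in the open interval $(1/r(T),+\infty)$ and the pair of $\inf$--$\sup$ conditions used in Proposition \ref{optthmmult}, which is what bridges the two criteria.
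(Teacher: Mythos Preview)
Your proposal is correct and matches the paper's own (implicit) argument: the corollary is stated without proof precisely because it follows by combining Theorem \ref{thmmult} for sufficiency with Proposition \ref{optthmmult} and \cite[Proposition 6.4]{BayIMRN} for necessity, exactly as you describe. Your explicit unpacking of why relative compactness in $(1/r(T),+\infty)$ amounts to the $\inf$--$\sup$ conditions is a helpful clarification that the paper leaves tacit.
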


The following question arises. It will be investigated later for the class of weighted shifts, see Paragraph \ref{CFHCweightedshifts}.

\begin{quest}\label{questopt}For those operators $T$ such that $a_T(S,X_0)>1/r(T)$ for some $S$ and $X_0$ as in Corollary \ref{corothm29opt}, can one improve the necessary condition on $\Lambda \subset (0,+\infty)$, given in Proposition \ref{optthmmult}, so that the multiples $\lambda T$ share common frequently hypercyclic vectors?
\end{quest}
 
\medskip

To conclude the paragraph, let us combine the previous results with Le\'on-M\"uller's theorem and Ansari's theorem for frequent hypercyclicity, see respectively \cite[Theorem 6.28]{bm} and \cite[Theorem 4.7]{Baygrifrequentlyhcop}. We recall that they tell us that $FHC(\lambda T)=FHC(T)=FHC(T^p)$ for any $\lambda \in \C$, $|\lambda|=1$, and any positive integer $p$. These with Theorem \ref{thmmult} thus imply:

\begin{coro}\label{corosuffLMAnsari}Let $X$ be a separable complex Fréchet space, $T$ a bounded linear operator on $X$ and $\Lambda$ a non-empty subset of $\C$. We assume that there exist a dense subset $X_0$ of $X$ and a mapping $S:X_0 \to X_0$ such that $T(S(x))=x$ for all $x\in X_0$.  If the set $\{|\lambda|:\,\lambda\in\Lambda\}$ is a countable relatively compact subset of $(a_T(X_0,S),b_T(X_0,S))$, then \[\bigcap_{\lambda\in\Lambda}FHC(\lambda T)\neq \emptyset.\]
	\noindent Moreover, if the set $\{|\lambda|^{1/p}:\,p\in \N^*,\,\lambda \in \Lambda\}$ is a countable relatively compact subset of $(a_T(X_0,S),b_T(X_0,S))$, then
	\[
	\bigcap_{\lambda\in\Lambda,\,p\in \N^*}FHC(\lambda T^p)\neq \emptyset.
	\]
\end{coro}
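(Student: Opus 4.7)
The plan is to reduce both statements to Theorem~\ref{thmmult} by using the frequent hypercyclicity versions of the León–Müller and Ansari theorems as ``symmetry'' tools that collapse complex multiples down to positive real multiples. For the first part, I would start from the León–Müller theorem for frequent hypercyclicity (\cite[Theorem 6.28]{bm}): for any nonzero $\lambda\in\C$, since $\lambda/|\lambda|$ lies on the unit circle, one has
\[
FHC(\lambda T)=FHC\bigl((\lambda/|\lambda|)\,|\lambda|T\bigr)=FHC(|\lambda|T).
\]
Intersecting over $\lambda\in\Lambda$ gives
\[
\bigcap_{\lambda\in\Lambda}FHC(\lambda T)=\bigcap_{r\in\{|\lambda|:\,\lambda\in\Lambda\}}FHC(rT),
\]
and by hypothesis the indexing set on the right is a countable relatively compact subset of $(a_T(X_0,S),b_T(X_0,S))$. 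Theorem~\ref{thmmult} applied directly to these positive real multiples yields a common frequently hypercyclic vector.

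For the second part, I would combine the above with Ansari's theorem for frequent hypercyclicity (\cite[Theorem 4.7]{Baygrifrequentlyhcop}). Given $\lambda\in\Lambda$ and $p\in\N^*$, pick a $p$-th root $\mu\in\C$ of $\lambda$, so that $\lambda T^p=(\mu T)^p$. Ansari then gives $FHC(\lambda T^p)=FHC((\mu T)^p)=FHC(\mu T)$, and León–Müller converts this into $FHC(|\mu|T)=FHC(|\lambda|^{1/p}T)$. Consequently,
\[
\bigcap_{\lambda\in\Lambda,\,p\in\N^*}FHC(\lambda T^p)=\bigcap_{r\in\{|\lambda|^{1/p}:\,p\in\N^*,\,\lambda\in\Lambda\}}FHC(rT),
\]
and the hypothesis on the set of $p$-th roots of moduli is tailored exactly so that Theorem~\ref{thmmult} applies again.

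The argument is purely a formal reduction, so there is no serious obstacle. The only things to verify carefully are that the passage from $\lambda T$ (or $\lambda T^p$) to a positive real multiple of $T$ preserves the frequently hypercyclic vectors, which is precisely the content of the invoked theorems, and that both index sets remain non-empty, countable, and relatively compact inside the interval $(a_T(X_0,S),b_T(X_0,S))$ — which is granted by the hypotheses.
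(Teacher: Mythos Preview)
Your proposal is correct and follows exactly the approach indicated in the paper: reduce to positive real multiples via the frequent hypercyclicity versions of the Le\'on--M\"uller and Ansari theorems, and then apply Theorem~\ref{thmmult}. The paper states this reduction in one sentence just before the corollary, and your write-up simply spells out the details of that sentence.
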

Note that the second condition is a consequence of the first one if $a_T(X_0,S)<1$ and $b_T(X_0,S)=+\infty$ (e.g., for a large class of weighted shifts, see the next paragraph).

\begin{rem} When $X$ is a separable Banach space, if $a_T(X_0,S)=1/r(T)$ and $b_T(X_0,S)=+\infty$ and $\Lambda$ has at least two elements, then the sufficient conditions on $\Lambda \subset \C$ given in Corollary \ref{corosuffLMAnsari} are also necessary.
\end{rem}

\medskip

We shall make another remark.
	
\begin{rem}It should be noticed that the definitions of $a_T(X_0,S)$ and $b_T(X_0,S)$ depend \textit{a priori} on $X_0$ and $S$. In particular, it could happen that for some $T\in \LL(X)$, there exist couples $(X_0,S_0)$ and $(X_1,S_1)$ such that $a_T(X_1,S_1)< a_T(X_0,S_0)$. Thus it is tempting to introduce the quantities
\[
a_T:=\inf_{(X_0,S)}a_T(X_0,S)\quad \text{and} \quad b_T:=\sup_{(X_0,S)}b_T(X_0,S),
\]
where the infimum and the supremum are taken over all couples $(X_0,S)$ such that $X_0$ is dense in $X$ and $S:X_0\to X_0$ is such that $T(S(x))=x$ for every $x\in X_0$. But the conclusions of the previous results may not hold true replacing $a_T(X_0,S)$ by $a_T$ and $b_T(X_0,S)$ by $b_T$. Indeed, it might happen that for some $(X_0,S)$, $a_T(X_0,S)$ is very close to $a_T$ but $b_T(X_0,S)$ is very small compared to $b_T$.

However, if $T:=B_w$ is a frequently hypercyclic weighted shift acting on $\ell^p(\N)$, $1\leq p< +\infty$, it turns out that $a_T=a_T(c_{00}(\N),F_w)$ and $b_T=b_T(c_{00}(\N),F_w)=+\infty$, see the next paragraph for the formal definitions of $c_{00}(\N)$ and $F_w$. This is a consequence of Bayart and Ruzsa's theorem \cite[Theorem~4]{BayRuz}.
\end{rem}

\medskip

In the next section we concentrate our attention on common frequent hypercyclicity for the important class of weighted shifts.

\subsection{Common frequent hypercyclicity for weighted shifts}\label{subweightedshifts}

In this whole section, we assume that $X$ is a Fréchet space with an unconditional basis $(e_n)_{n\in \N}$. We call \textit{weight} a sequence of nonzero real numbers. Given a weight $w=(w_n)_{n\in \N}$, the \textit{weighted shift} $B_w$ is defined, for $x=\sum_{n\geq 0}x_ne_n \in X$, by
\[
B_w(x)=\sum_{n\geq 0}w_{n+1}x_{n+1}e_n.
\]
The series $\sum_{n\geq 0}w_{n+1}x_{n+1}e_n$ may not be convergent in $X$ for all $x\in X$ yet, by the Closed Graph Theorem, $B_w$ maps $X$ into itself if and only if it is continuous on $X$. In this case, it is equivalently defined by $B_w(e_n)=w_{n}e_{n-1}$, $n \geq 0$, with the convention $e_{-1}=0$.

For any weight $w$, $B_w$ admits a (formal) right inverse, that we denote $F_w$, given by
\[
F_w(x)=\sum_{n\geq 1}\frac{x_{n-1}}{w_{n}}e_n
\]
for $x=\sum_{n\geq 0}x_ne_n\in X$. The series $\sum_{n\geq 1}\frac{x_{n-1}}{w_{n}}e_n$ may not belong to $X$, but $F_w$ is well-defined from $c_{00}(\N):=\text{span}(e_n:\,n\geq 0)$ into itself and $F_w(e_n)=e_{n+1}/w_{n+1}$, $n \geq 0$. Note that the map $F_w$ is referred to as the \textit{forward shift} associated to the weight $w^{-1}:=(w_{n}^{-1})_{n\geq 0}$.

\medskip

We recall that a continuous weighted shift $B_w$ on $X$ is frequently hypercyclic whenever the series
\[
\sum_{n\geq 1}(w_1\ldots w_n)^{-1}e_n
\]
is convergent in $X$, see \cite[Corollary 9.14]{gp}.

\subsubsection{General criteria}

We first state a criterion of common frequent hypercyclicity for general families of weighted shifts, derived from Corollary \ref{CFHC}.

\begin{thm}\label{weightedshiftscounta}Let $X$ be a separable Fr\'echet space with an unconditional basis $(e_n)_{n \in \N}$ and $w(i)=(w_n(i))_{n\in\N}$, $i\in \N$, be countably many weights for which every $B_{w(i)}$, $i\in \N$, is a continuous operator on $X$. We assume that there exist a weight $\omega=(\omega_n)_{n\in\N}$, constants $M\geq 1$ and $0<\eta \leq 1$ with either $M=\eta=1$ or $M\neq 1$ and $\eta\neq 1$, and a constant $C>0$, such that for any $i\in \N$ and any $n\geq 0$, $m\geq 1$,
\begin{enumerate}[(i)]
	\item \label{lablab1}the series $\sum_{k \geq 1}(\omega_1\ldots\omega_{k})^{-1}e_{k}$ is unconditionally convergent in $X$;
	\item \label{lablab2}$\vert \omega_n\ldots \omega_{n+m} \vert \leq C\eta^m \vert w_n(i)\ldots w_{n+m}(i) \vert$;
	\item \label{lablab3}$C^{-1}M ^{-m}\leq \vert w_n(i)\ldots w_{n+m}(i) \vert \leq CM^m$.
\end{enumerate}
Then there exists a common frequently hypercyclic vector for the family $(B_{w(i)})_{i\in \N}$.
\end{thm}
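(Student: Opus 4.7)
The strategy is to apply Corollary \ref{CFHC} with $X_0 = c_{00}(\N)$ (the linear span of the basis vectors), $T_i = B_{w(i)}$ and $S_i = F_{w(i)}$. Condition (4) of the corollary is automatic since $F_{w(i)}$ is, by construction, a right inverse of $B_{w(i)}$ on $c_{00}(\N)$. By linearity it suffices to verify (1)--(3) on each basis vector $x = e_k$. The parameter $c > 1$ must be chosen large enough so that $M^{2}\eta^{c-1} \le 1$; this is why the hypotheses impose the dichotomy either $M = \eta = 1$ (so that any $c > 1$ works) or $M > 1$ and $\eta < 1$ (so that this condition is equivalent to $c \ge 1 + 2\log M/\log(1/\eta)$).

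For condition (1), the series $\sum_n B_{w(i)}^n(e_k)$ is a finite sum, and for $\sum_n F_{w(i)}^n(e_k) = \sum_n (w_{k+1}(i)\cdots w_{k+n}(i))^{-1} e_{k+n}$ the hypothesis (ii) gives a bound $C\eta^{n-1}/|\omega_{k+1}\cdots\omega_{k+n}|$ on the coefficient, uniformly in $i$. Since (i) easily implies that $\sum_n (\omega_{k+1}\cdots\omega_{k+n})^{-1} e_{k+n}$ converges unconditionally (this series differs from a tail of the one in (i) by the multiplicative constant $\omega_1\cdots \omega_k$), uniform unconditional convergence follows from the unconditionality of the basis.

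The heart of the argument is condition (2). A direct computation gives
\[
T_i^m\bigl(S_j^{m+n}(e_k)\bigr) \;=\; \frac{w_{k+n+1}(i) \cdots w_{k+m+n}(i)}{w_{k+1}(j) \cdots w_{k+m+n}(j)}\, e_{k+n}.
\]
I would split the denominator as a product of $n$ then $m$ factors, apply (ii) to $(w_{k+1}(j)\cdots w_{k+n}(j))^{-1}$ and (iii) twice to the ratio of the two remaining $m$-factor products, to obtain a bound on the coefficient of the form $C^{3} M^{2(m-1)}\eta^{n-1}/|\omega_{k+1}\cdots \omega_{k+n}|$. The decisive step is the identity $M^{2m}\eta^{n} = (M^{2}\eta^{c-1})^{m}\eta^{n-(c-1)m}$, so that for $n \ge (c-1)m$ and the above choice of $c$ this factor is bounded by $1$. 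The coefficient is then uniformly controlled by a constant multiple of $1/|\omega_{k+1}\cdots\omega_{k+n}|$, and uniform (in $m$, $i$, $j$) unconditional convergence follows once more from (i) and unconditionality of the basis.

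Condition (3) is essentially free: since $B_{w(i)}^m(e_\ell) = 0$ whenever $\ell < m$, the term $T_i^m(S_j^{m-n}(e_k))$ vanishes as soon as $n > k$, and the constraint $n \ge \frac{c-1}{c}m$ then forces the whole series to vanish for $m > ck/(c-1)$ and to reduce to finitely many terms otherwise; so uniform unconditional convergence is trivial (take $N = k+1$). The only genuine obstacle of the proof is thus the careful calibration of $c$ against the two multiplicative parameters $M$ and $\eta$ in (2); everything else reduces, via unconditionality of the basis, to the single unconditional convergence encoded in (i).
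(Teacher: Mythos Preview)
Your proof is correct and follows the same route as the paper: apply Corollary~\ref{CFHC} with $X_0=c_{00}(\N)$, $T_i=B_{w(i)}$, $S_i=F_{w(i)}$, and verify the hypotheses on each $e_k$. The only noticeable difference is in the treatment of condition~(2): where you bound the coefficient of $e_{k+n}$ by a constant times $|\omega_{k+1}\cdots\omega_{k+n}|^{-1}$ (using $M^2\eta^{c-1}\le 1$ directly) and conclude via unconditionality of the basis together with~(i), the paper instead introduces an auxiliary $\varrho\in(\eta,1)$, chooses $c$ so that $M^2\varrho^{c-1}\le 1$, and bounds the \emph{norm} of each term by $K(\eta/\varrho)^n$ using only boundedness of $((\omega_{k+1}\cdots\omega_{k+n})^{-1}e_{k+n})_n$; your version is marginally cleaner. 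Your observation that condition~(3) is trivially satisfied because all terms with $n>k$ vanish is correct (the paper simply leaves this to the reader).
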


\begin{proof}We consider $X_0=\text{span}(e_k:\,k\geq 0)$. Since $X_0$ is dense in $X$, up to taking $S_i:=F_{w(i)}$, $i \in \N$, we need only check that the assumptions \eqref{m11}--\eqref{m21} of Corollary \ref{CFHC} are satified for any $x=e_k$, $k\in \N$. Let us then fix $k\in \N$. Observe that \eqref{m21} is trivially satisfied. From now on, for $l<0$, we use the notations $e_l=0$ and $w_l(i)=0$, $i\in \N$. For any $i,j,l,m\in \N$, let us write
\[
B_{w(i)}^m(F_{w(j)}^l(e_k))=\frac{w_{k+l-m+1}(i)\ldots w_{k+l}(i)}{w_{k+1}(j)\ldots w_{k+l}(j)}e_{k+l-m}.
\]
Note that $B_{w(i)}^n(e_k)=0$ whenever $n > k$. This gives the first part of \eqref{m11} in Corollary \ref{CFHC}. Moreover, for every $i \in \N$,
\begin{equation}\label{1eq}
\sum_{n\geq 0}F_{w(i)}^{n}(e_k)=\sum_{n\geq 0}\frac{1}{w_{k+1}(i)\ldots w_{k+n}(i)}e_{k+n}.
\end{equation}
By assumption \eqref{lablab2}, we have $\vert w_{k+1}(i)\ldots w_{k+n}(i) \vert > \vert \omega_{k+1}\ldots \omega_{k+n} \vert$ for every $n\geq 1$ and $i \in \N$. So, by condition \eqref{lablab1} and using that $(e_n)_{n\in\N}$ is an unconditional basis, we get that the left-hand side term in \eqref{1eq} is unconditionally convergent in $X$, uniformly for $i$, hence the second part of \eqref{m11} in Corollary \ref{CFHC}.

Let us now turn to proving that \eqref{m31} in Corollary \ref{CFHC} holds. We write
\begin{align*}
 B_{w(i)}^m(F_{w(j)}^{m+n}(e_k))  & =  \frac{w_{k+n+1}(i)\ldots w_{k+n+m}(i)}{w_{k+1}(j)\ldots w_{k+n+m}(j)}e_{k+n}  \\
& =  \frac{w_{k +n+1}(i)\ldots w_{k+n+m}(i)}{w_{k+n+1}(j)\ldots w_{k+n+m}(j)} \frac{\omega_{k+1}\ldots \omega_{k+n}}{w_{k+1}(j)\ldots w_{k+n}(j)} (\omega_{k+1}\ldots \omega_{k+n})^{-1}e_{k+n}.
\end{align*}
Let us first assume that $M=\eta=1$. By \eqref{lablab1}, \eqref{lablab2} and \eqref{lablab3}, the series $\sum_{n\geq 0}B_{w(i)}^m(F_{w(j)}^{m+n}(e_k))$ is unconditionally convergent uniformly with respect to $m\geq 1$ and $i,j\in\N$. Therefore, the assumption \eqref{m31} is satisfied for every $c > 1$.

Let us now suppose that $M>1$ so that $\eta <1$. Let $\varrho\in (\eta ,1)$ and $c>1$ be such that $M^2\varrho^{c-1} \leq 1$. By the condition \eqref{lablab1} and unconditionality of $(e_n)_{n \in \N}$, the sequence $((\omega_{k+1}\ldots \omega_{k+n})^{-1}e_{k+n})_{n \in \N}$ is bounded. We denote by $\Vert \cdot \Vert $ any continuous semi-norm on $X$. Then, for some constant $K$ (depending only on $\eta$, $C$, $k$ and the constant of unconditionality of $(e_n)_{n \in \N}$) and thanks to the assumptions \eqref{lablab2} and \eqref{lablab3}, we have for any $n,i, j \in \N$ and $m \geq 1$,
\[
\Big\Vert B_{w(i)}^m(F_{w(j)}^{m+n}(e_k)) \Big\Vert \leq K M^{2m} \eta^n.
\]
Let us now write
\[
M^{2m}\eta ^n=(M^2\varrho^{c-1})^m\varrho^{n-(c-1)m}\left(\frac{\eta}{\varrho}\right)^n\leq \left(\frac{\eta}{\varrho}\right)^n
\]
for any $n\geq (c-1)m$ and $m \geq 1$. As $\varrho\in (\eta ,1)$, the series $\sum _{n\geq (c-1)m}M^{2m} \eta^n$ is absolutely convergent, uniformly for $m\geq 0$, and so the series $\sum _{n\geq (c-1)m}B_{w(i)}^m(F_{w(j)}^{m+n}(e_k))$ converges unconditionally, uniformly for $m \geq 0$ and $i,j\in\N$. This implies \eqref{m31} from Corollary \ref{CFHC}.

That \eqref{m41} in  Corollary \ref{CFHC} holds in this setting  is left to the reader.
\end{proof}

\begin{rem}\label{Rem_Th2.18}
	As a corollary of the proof, one may check that Theorem \ref{weightedshiftscounta} remains true if we suppose that there exist a weight $\omega=(\omega_n)_{n\in\N}$ and a constant $C>0$, such that for any $i,j\in \N$ and any $n\geq 0$, $m\geq 1$,
	\begin{enumerate}[(i)]
	\item \label{lablab11}the series $\sum_{k \geq 1}(\omega_1\ldots\omega_{k})^{-1}e_{k}$ is unconditionally convergent in $X$;
	\item \label{lablab22}$ \vert \omega_n\ldots \omega_{n+m}\vert \leq C \vert w_n(i)\ldots w_{n+m}(i) \vert$;
	\item \label{lablab33}$C^{-1}\leq \big\vert \frac{w_0(i)\ldots w_n(i)}{w_0(j)\ldots w_n(j)} \big\vert \leq C$.
	\end{enumerate}
In particular, if the family is composed of a finite (non-zero) number of frequently hypercyclic operators, then it suffices to check (\ref{lablab33}). Moreover, if two such operators satisfy that the product of their weights are equivalent then they share frequent hypercyclic operators.
\end{rem}

Let us give an example.

\begin{ex} Let $1 \leq p < + \infty$. For $\lambda \in (0,+\infty)$, let $B_{w(\lambda)}$ be the weighted shift on $\ell^p(\N)$, defined by $w_n(\lambda)=1+\lambda /n$, $n\geq 1$. In \cite{CosSam}, it is proven that $\bigcap_{\lambda >1}HC(B_{w(\lambda)})$ is residual. Now, one may check that the series
\[
\sum_{n\geq 1}\frac{1}{w_1(\lambda)\ldots w_n(\lambda)}e_n
\]
is unconditionally convergent in $\ell^p(\N)$ if and only if $\lambda > 1/p$ (where $(e_n)_{n\in \N}$ is the unit sequence in $\ell^p(\N)$). We can thus deduce from Theorem \ref{weightedshiftscounta}, applied with $M=\eta =1$, that for any countable relatively compact subset $\Lambda$ of $(\frac{1}{p},+\infty)$, one has
		\[
		\bigcap_{\lambda \in \Lambda} FHC(B_{w(\lambda)})\neq \emptyset.
		\]
Observe that, by Bayart and Ruzsa's theorem (see Remark \ref{remBayRuz} below), $B_{w(\lambda)}$ is not frequently hypercyclic on $\ell^p(\N)$ if $\lambda \leq 1/p$.
\end{ex}

\medskip

The main result of Paragraph \ref{subsec-multiple} can be also applied to weighted shifts. Let $X$ be a separable Fréchet space with an unconditional basis $(e_n)_{n\in \N}$ and set $c_{00}(\N)=\text{span}(e_n:\,n\geq 0)$. With the notations of Paragraph \ref{subsec-multiple}, a slight generalization of Abakumov and Gordon's theorem states that the set of common hypercyclic vectors for the multiples $\lambda B_w$ of a continuous weighted shift $B_w$ on $X$, $\lambda > a_{B_w}(c_{00}(\N),F_w)$, is $G_{\delta}$ and dense in $X$ see \cite[p. 178]{bm} or \cite{BayMathIndiana}. Note that this result can also be deduced from Theorem \ref{BayMathcommon}.

In this context, Theorem \ref{thmmult} reads as follows.

\begin{coro}\label{coromultipleshifts}Let $X$ be a separable Fréchet space with an unconditional basis $(e_n)_{n\in \N}$ and let $B_w$ be a continuous weighted shift on $X$. Then the set $\bigcap_{\lambda \in \Lambda}FHC(\lambda B_w)$ is non-empty whenever $\Lambda$ is any countable relatively compact non-empty subset of $(a_{B_w}(c_{00}(\N),F_w),+\infty)$.
\end{coro}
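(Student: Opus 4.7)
The plan is to recognize this corollary as a direct specialization of Theorem \ref{thmmult} to the choice $T := B_w$, $X_0 := c_{00}(\N)$ and $S := F_w$. The only thing that really needs to be checked, beyond the trivial compatibility of $X_0$ and $S$, is that the upper bound $b_{B_w}(c_{00}(\N),F_w)$ of the admissible interval is $+\infty$, so that the statement of the corollary matches the hypotheses of Theorem \ref{thmmult}.

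First I would verify the structural hypotheses of Theorem \ref{thmmult}. Since $(e_n)_{n\in\N}$ is an unconditional basis of $X$, the subspace $c_{00}(\N) = \operatorname{span}(e_n:\,n\geq 0)$ is dense in $X$. By the very definition of $F_w$, one has $F_w(e_n) = w_{n+1}^{-1} e_{n+1}$, so $F_w$ maps $c_{00}(\N)$ into itself, and the identity $B_w(F_w(x)) = x$ holds for every $x\in c_{00}(\N)$.

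The key point is then the computation of $b_{B_w}(c_{00}(\N),F_w)$. For any $x = \sum_{k=0}^N x_k e_k \in c_{00}(\N)$, one has $B_w^n(x) = 0$ for every $n \geq N+1$, that is, $c_{00}(\N)\subset \bigcup_{n\geq 0}\ker(B_w^n)$. Consequently, for every $\lambda>0$ and every $x\in c_{00}(\N)$, the series $\sum_{n\geq 0}(\lambda B_w)^n(x)$ collapses to a finite sum and is therefore unconditionally convergent. By the very definition of $b_{B_w}(X_0,S)$ this yields
\[
b_{B_w}(c_{00}(\N),F_w) = +\infty.
\]

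With this in hand, any countable relatively compact non-empty subset $\Lambda$ of $(a_{B_w}(c_{00}(\N),F_w),+\infty)$ is a countable relatively compact non-empty subset of the interval $(a_{B_w}(c_{00}(\N),F_w),b_{B_w}(c_{00}(\N),F_w))$. Theorem \ref{thmmult} then applies verbatim and produces a vector $x\in X$ lying in $\bigcap_{\lambda\in\Lambda}FHC(\lambda B_w)$, which is therefore non-empty. There is no genuine obstacle here: the corollary is essentially a reformulation of Theorem \ref{thmmult} after observing that the nilpotency of $B_w$ on the dense set $c_{00}(\N)$ forces $b_{B_w}(c_{00}(\N),F_w)$ to be infinite.
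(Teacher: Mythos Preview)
Your proposal is correct and follows exactly the paper's approach: the corollary is presented there as the direct specialization of Theorem \ref{thmmult} to $T=B_w$, $X_0=c_{00}(\N)$, $S=F_w$, using the observation (made earlier in the paper) that $c_{00}(\N)\subset\bigcup_{n\geq 0}\ker(B_w^n)$ forces $b_{B_w}(c_{00}(\N),F_w)=+\infty$.
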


Similarly, Corollary \ref{corothm29opt} tells us that if in Corollary \ref{coromultipleshifts} we assume in addition that $a_{B_w}(c_{00}(\N),F_w)=1/r(B_w)$ and $\Lambda$ has at least two elements, then the condition becomes also necessary . Moreover Question \ref{questopt} makes sense, and seems to be a bit more accessible in this setting, especially when $X=\ell^p(\N)$, $1\leq p < +\infty$. This particular case will be investigated in the next paragraph.

\medskip

To finish, we shall make a remark.

\begin{rem}\label{remBayRuz}
Remark that in a lot of cases, the condition $\Lambda \subseteq \left(a_{B_w}(c_{00}(\N),F_w),+\infty\right)$ in Corollary \ref{coromultipleshifts} can be replaced by asking that $\lambda B_w$ is frequently hypercyclic for every $\lambda\in\Lambda$ (which is clearly the weakest thing to ask to get common frequent hypercyclicity). Indeed  for $\ell^p(\N)$ spaces with $1\leq p <\infty$, this follows from a result of Bayart and Ruzsa \cite{BayRuz} in 2015 stating that weighted shifts are frequently hypercyclic if and only if they satisfy the Frequent Hypercyclicity Criterion. This result was then extended to more general classes of spaces in \cite{CGEM}, including for example the space $H(\D)$.
\end{rem}

\subsubsection{Common frequent hypercyclicity for multiples of weighted shifts on $\ell^p(\N)$}\label{CFHCweightedshifts}
In this paragraph we specify the study led in Section \ref{subsec-multiple} and in the previous paragraph to multiples of a single weighted shift acting on $\ell^p(\N)$, $1\leq p<+\infty$.

Let us fix $1\leq p<+\infty$. We recall that $\ell^p(\N)$ stands for the space of all sequences $x=(x_n)_{n\in \N}$ of scalars for which $\Vert x \Vert:=(\sum_{n\in \N}|x_n|^p)^{1/p} < +\infty$. Endowed with the norm $\Vert \cdot \Vert$, it is a Banach space. The unit sequence $(e_n)_{n\in \N}$ is a boundedly complete unconditional basis of $\ell^p(\N)$ and the subspace $X_0:=c_{00}(\N)=\text{span}(e_n:\, n\geq 0)$ is dense in $\ell^p(\N)$. A weighted shift $B_w$ is bounded on $\ell^p(\N)$ if and only if the sequence $w$ is bounded, \textit{i.e.} $\sup_{n \geq 1} \vert w_n \vert<+\infty$, in which case $\Vert B_w \Vert = \sup_{n \geq 1} \vert w_n \vert$.

Most of the important quantities introduced in Section \ref{subsec-multiple} can be explicitly computed when working with weighted shifts on  $\ell^p(\N)$. We keep the notations of Paragraph \ref{sufficientmultiple} except for the spectral radius $r(B_w)$ of a weighted shift $B_w$ that we will simply denote by $r_w$. We also set
\[
r_{p,w}:=\sup\{|\lambda|:\,\lambda \in \sigma_p(B_w)\},
\]
where $\sigma_p(B_w)$ denotes the point spectrum of $B_w$ (\textit{i.e.}, the eigenvalues of $B_w$). Then some calculations give:
\begin{itemize}
	\item $X_0 = \cup_{n\geq 1}\ker(B_w^n)$, hence $b_{B_w}(X_0,F_w)=+\infty$;
	\item $a_{B_w}(X_0,F_w)=r_{p,w}^{-1}=\limsup_{n \to \infty} \vert w_1\ldots w_n \vert^{-1/n}$, see e.g., \cite[Theorem 8, p. 70]{Shields};
	\item $r_w=\lim_{n \to \infty}\left(\sup_{k \geq 1} \vert w_k\ldots w_{k+n} \vert \right)^{1/n}$.
\end{itemize}
Let us also introduce the quantity:
\begin{itemize}\item $\lambda_{w}:=\limsup_{n \to \infty} \vert w_1\ldots w_n \vert^{1/n}$.
\end{itemize}
We thus have
\[
\Vert B_w \Vert ^{-1}\leq r_w^{-1}\leq \lambda_{w}^{-1}\leq r_{p,w}^{-1}.
\]

\medskip

On the one hand, if $w$ is a monotonic sequence (hence a convergent sequence to some real number $w_{\infty}$), then $r_w= \lambda_{w}= r_{p,w}=w_{\infty}$. Note that if $w$ is increasing, then these quantities are also equal to $\Vert B_w \Vert$. On the other hand, as shown by the next example, it is not difficult to provide with weights $w$ which allow to distinguish all or some of the quantities $\Vert B_w \Vert^{-1}$, $r_w^{-1}$, $\lambda_w^{-1}$ and $r_{p,w}^{-1}$.

\begin{ex} Let $a\leq b\leq c\leq d$ be four positive real numbers, and let us define, for any $n \geq 1$,
	\begin{equation*}
	w_n:=
	\begin{cases}
	a & { if }\ n\in \{1,\ldots,4\}\cup\{k2^{(k-1)^2} + 1,\ldots,2^{k^2}-1\}\\
	d & { if }\ n=2^{k^2}\\
	c & { if }\ n\in \{2^{k^2}+1,\ldots, 2^{k^2}+k+1\}\\
	b & { if }\ n\in \{2^{k^2}+k+2,\ldots, (k+1)2^{k^2}\}
	\end{cases}
	,\ k\geq 2.
	\end{equation*}
Then, one may check that
	\[
	\Vert B_w\Vert ^{-1}=1/d \leq r_w ^{-1}=1/c \leq \lambda_w^{-1} =1/b \leq r_{p,w}^{-1} =1/a.
	\]
\end{ex}

\medskip

We recall that by Bayart and Ruzsa's theorem \cite{BayRuz}, a weighted shift is frequently hypercyclic on $\ell^p(\N)$, $1\leq p< +\infty$, if and only if it satisfies the Frequent Hypercyclicity Criterion. Then, for any $0\leq \lambda < r_{p,w}^{-1}$, $\lambda B_w$ is not frequently hypercyclic. Together with Proposition \ref{optthmmult} and Corollary \ref{corothm29opt}, we thus have the following so far:

\begin{coro}\label{CorB_w}
	Let $B_w$ be a bounded weighted shift on $\ell^p(\N)$, $1\leq p < +\infty$, and let $\Lambda \subset (0,+\infty)$ be a non-empty set. Then
	\begin{enumerate}
		\item \label{eqrefrappel} the set $\bigcap_{\lambda \in \Lambda}FHC(\lambda B_w)$ is non-empty whenever $\Lambda$ is a countable relatively compact subset of $(r_{p,w}^{-1},+\infty)$;
		\item \label{eqrefrappel2} the set $\bigcap_{\lambda \in \Lambda}FHC(\lambda B_w)$ is empty whenever $\Lambda$ is unbounded, or $\Lambda$ has at least two elements and $r_w^{-1} \geq \inf(\Lambda)$.
	\end{enumerate}
In particular, if $r_{p,w}=r_w$ and $\Lambda$ has at least two elements, then the sufficient condition in \eqref{eqrefrappel} is also necessary.
\end{coro}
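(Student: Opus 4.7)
The plan is to assemble this corollary directly from the machinery already developed in the paragraph, rather than re-deriving anything from scratch. The two halves of the statement correspond to a sufficient condition (applying Theorem \ref{thmmult} via Corollary \ref{coromultipleshifts}) and a necessary condition (applying Proposition \ref{optthmmult}), and the crucial bridge between the two is the explicit computation of $a_{B_w}(c_{00}(\N),F_w)$ and $b_{B_w}(c_{00}(\N),F_w)$ for weighted shifts on $\ell^p(\N)$.

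For part \eqref{eqrefrappel}, I would invoke Corollary \ref{coromultipleshifts} with the dense subspace $X_0=c_{00}(\N)$ and the right inverse $S=F_w$. The key computations already recorded in this paragraph give
\[
a_{B_w}(c_{00}(\N),F_w)=r_{p,w}^{-1}\quad\text{and}\quad b_{B_w}(c_{00}(\N),F_w)=+\infty,
\]
the second equality following from $c_{00}(\N)=\bigcup_{n\ge 1}\ker(B_w^n)$. Hence the interval $(a_{B_w}(c_{00}(\N),F_w),b_{B_w}(c_{00}(\N),F_w))$ is exactly $(r_{p,w}^{-1},+\infty)$, and any countable relatively compact subset $\Lambda$ of this interval satisfies the hypotheses of Corollary \ref{coromultipleshifts}, yielding $\bigcap_{\lambda\in\Lambda}FHC(\lambda B_w)\neq\emptyset$.

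For part \eqref{eqrefrappel2}, I would apply Proposition \ref{optthmmult} with $T=B_w$, noting that the spectral radius $r(B_w)$ is precisely $r_w$. The two conditions in Proposition \ref{optthmmult}, namely ``$\Lambda$ unbounded'' and ``$1/r(T)\geq\inf(\Lambda)$'', translate literally to ``$\Lambda$ unbounded'' and ``$r_w^{-1}\geq\inf(\Lambda)$'' in our setting, and the conclusion $\bigcap_{\lambda\in\Lambda}FHC(\lambda B_w)=\emptyset$ is then immediate from that proposition provided $\Lambda$ has at least two elements (as required by Proposition \ref{optthmmult}).

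For the final ``in particular'' clause, assume $r_{p,w}=r_w$ and $\Lambda$ has at least two elements. Since $FHC(\lambda B_w)$ is empty as soon as $\lambda<r_{p,w}^{-1}$ (by Bayart and Ruzsa's theorem recalled in Remark \ref{remBayRuz}, combined with the fact that $\lambda B_w$ is then not even hypercyclic once $\lambda<r_w^{-1}$), we may assume $\Lambda\subset[r_{p,w}^{-1},+\infty)$. If $\Lambda$ is not a countable relatively compact subset of $(r_{p,w}^{-1},+\infty)$, then either $\Lambda$ is uncountable (in which case emptiness follows from the Bayart result \cite[Proposition 6.4]{BayIMRN} recalled at the beginning of Paragraph \ref{necessaryfirst}), or $\Lambda$ is unbounded, or $\inf(\Lambda)=r_{p,w}^{-1}=r_w^{-1}$; in the last two cases part \eqref{eqrefrappel2} applies and gives emptiness. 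I do not anticipate any serious obstacle: the entire difficulty has been absorbed into Theorem \ref{thmmult} and Proposition \ref{optthmmult}, and the only thing that has to be checked here is that the quantities $a_{B_w}(c_{00}(\N),F_w)$ and $r(B_w)$ take the stated explicit values, which has already been done earlier in the paragraph.
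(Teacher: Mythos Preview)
Your proposal is correct and follows essentially the same route as the paper: part~\eqref{eqrefrappel} via Corollary~\ref{coromultipleshifts} together with the computation $a_{B_w}(c_{00}(\N),F_w)=r_{p,w}^{-1}$, part~\eqref{eqrefrappel2} via Proposition~\ref{optthmmult} with $r(B_w)=r_w$, and the final clause by the case analysis (uncountable, unbounded, or $\inf(\Lambda)=r_w^{-1}$), which is exactly what underlies the paper's invocation of Corollary~\ref{corothm29opt} and the Bayart--Ruzsa remark. The only cosmetic point is that your parenthetical about $\lambda<r_w^{-1}$ implying non-hypercyclicity is superfluous once you have Bayart--Ruzsa for $\lambda<r_{p,w}^{-1}$.
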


The next proposition is a slight improvement of \eqref{eqrefrappel2} in the previous corollary, and a partial answer to Question \ref{questopt} in the present context.

\begin{prop}\label{corooptmult}Let $B_w$ be a weighted shift acting on $\ell^p(\N)$ and $\Lambda$ a subset of $(0,+\infty)$ with at least two elements. If $\lambda_w^{-1} \geq \inf(\Lambda)$, then
	\[
	\bigcap_{\lambda \in \Lambda}FHC(\lambda B_w)=\emptyset.
	\]
\end{prop}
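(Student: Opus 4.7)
My plan is to adapt the scheme of Proposition~\ref{optthmmult}, refining its crude operator-norm bound by the weighted-shift identity $(B_w^m x)_0 = (B_w^n x)_{m-n} \cdot w_1\cdots w_{m-n}$, so that the spectral radius is replaced by the finer quantity $\lambda_w$. The easy case $\inf(\Lambda) < \lambda_w^{-1}$ is handled directly: picking $\lambda \in \Lambda$ with $\lambda < \lambda_w^{-1}$ yields $\lambda\lambda_w<1$, hence $\lambda^n|w_1\cdots w_n| = (\lambda|w_1\cdots w_n|^{1/n})^n \to 0$; in particular this sequence stays bounded and, by Salas' characterization of hypercyclic weighted shifts, $\lambda B_w$ is not hypercyclic, so $FHC(\lambda B_w)=\emptyset$.

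For the remaining case $\inf(\Lambda) = \lambda_w^{-1}$, since $\Lambda$ has at least two elements I can choose $\lambda \in \Lambda$ with $\lambda > \lambda_w^{-1}$ and a sequence $(\mu_k)\subset \Lambda$ with $\mu_k \to \lambda_w^{-1}$ (taking $\mu_k = \lambda_w^{-1}$ constantly if $\lambda_w^{-1}\in\Lambda$, otherwise exploiting the accumulation). Assuming for contradiction that some $x$ lies in $\bigcap_{\nu\in\Lambda}FHC(\nu B_w)$, I introduce
\[
\mathcal{N}_\lambda = \{n\in\N:\,\Vert\lambda^n B_w^n x\Vert < 1\},\qquad \mathcal{N}_k = \{m\in\N:\,\Vert\mu_k^m B_w^m x - e_0\Vert<1/2\},
\]
each of positive lower density. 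For $n \in \mathcal{N}_\lambda$, every coordinate satisfies $|x_{n+j} w_{j+1}\cdots w_{j+n}| < \lambda^{-n}$; choosing $j=m-n$ for $m > n$ with $m \in \mathcal{N}_k$, and combining with $|x_m w_1\cdots w_m|> (2\mu_k^m)^{-1}$ via the above factorization, yields the key inequality
\[
|w_1\cdots w_{m-n}| > \frac{\lambda^n}{2\mu_k^m}.
\]

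To conclude, I select inductively $m_k \in \mathcal{N}_k$ with $m_k \to +\infty$ and set $n_k = \max\{n < m_k : n\in \mathcal{N}_\lambda\}$, so that $\underline{d}(\mathcal{N}_\lambda) \leq \liminf_k (n_k+1)/m_k$, which forces $n_k\to+\infty$ (otherwise $\underline{d}(\mathcal{N}_\lambda)=0$, contradicting positivity). If $m_k - n_k$ were bounded along a subsequence, the key inequality would force $(\lambda/\mu_k)^{n_k}$ to remain bounded, in conflict with $\lambda/\mu_k \to \lambda\lambda_w > 1$ and $n_k \to +\infty$. Otherwise $m_k - n_k \to +\infty$, so $|w_1\cdots w_{m_k-n_k}|^{1/(m_k-n_k)} \leq \lambda_w+\varepsilon$ eventually; taking $m_k$-th roots of the key inequality, letting $s_k=n_k/m_k$, and extracting a convergent subsequence $s_k\to s$, one gets in the limit $\lambda^s \leq \lambda_w^{-1}(\lambda_w+\varepsilon)^{1-s}$. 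Letting $\varepsilon\to 0$ yields $(\lambda\lambda_w)^s \leq 1$, whence $s=0$ since $\lambda\lambda_w>1$, so $\underline{d}(\mathcal{N}_\lambda)=0$, the desired contradiction. The main obstacle is the uniform treatment of the sub-cases $\lambda_w^{-1}\in\Lambda$ versus $\lambda_w^{-1}$ only an accumulation point of $\Lambda$, which the varying sequence $(\mu_k)$ handles simultaneously.
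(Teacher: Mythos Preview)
Your proof is correct and follows essentially the same route as the paper's: both arguments derive the key inequality $|w_1\cdots w_{m-n}| > \lambda^n/(2\mu_k^m)$ by reading off the $(m-n)$-th coordinate of $\lambda^n B_w^n x$ and the $0$-th coordinate of $\mu_k^m B_w^m x$, then conclude that $n_k/m_k\to 0$ via $\limsup_N|w_1\cdots w_N|^{1/N}=\lambda_w$. Your treatment is somewhat more explicit (separating the easy case $\inf(\Lambda)<\lambda_w^{-1}$ via Salas' criterion, and isolating the bounded-$m_k-n_k$ subcase), but the strategy and the key estimate are identical to the paper's.
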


\begin{proof}It is very similar to that of Proposition \ref{optthmmult}, so we only give the outline in the case where $\Lambda$ is a sequence $(\lambda_k)_{k\in \N}$ decreasing to some $\lambda_{\infty} \leq \lambda_w^{-1}$, and where there exists $x=(x_n)_{n\in \N}\in \ell^p(\N)$ which is a hypercyclic vector for each $\lambda_kB_w$, $k\in \N$. As in the proof of Proposition \ref{optthmmult}, we introduce the sets
	\[
	\mathcal{N}_0:=\bigg\{n\in \N:\, \Vert \lambda_0^nB_w^n (x) \Vert < 1\bigg\}\quad 
	\text{and}
	\quad \mathcal{N}_k:=\bigg\{m\in \N:\, \Vert \lambda_k^mB_w^m(x)  - e_0\Vert < \frac{1}{2}\bigg\},\, k\geq 1.
	\]
Then we similarly define increasing sequences $(n_k)_{k\geq 1}\subset \mathcal{N}_0$ and $(m_k)_{k\geq 1}$, with $m_k \in \mathcal{N}_k$ for $k \geq 1$, and such that $\underline{d}(\mathcal{N}_0)\leq \limsup_{k \to \infty} n_k/m_k$. Thus for any $k\geq 1$,
	\[
	\lambda_0^{n_k} \vert w_{m_k-n_k+1}\ldots w_{m_k} \vert \vert x_{m_k}\vert < 1\quad {\rm and} \quad \lambda_k^{m_k} \vert w_1\ldots w_{m_k}\vert \vert x_{m_k}\vert > \frac{1}{2}.
	\]
It follows, for any $k \geq 1$,
	\[
	\frac{\lambda_0^{n_k}}{\lambda_k^{m_k}} < 2 \vert w_1 \ldots  w_{m_k-n_k} \vert.
	\]
In particular $m_k-n_k \to +\infty$ and for any $k \geq 1$,
	\[
	(\lambda_0/\lambda_k)^{n_k/m_k} < 2^{1/m_k} \lambda_k^{1-n_k/m_k} \vert w_1 \ldots w_{m_k-n_k}\vert^{1/m_k},
	\]
whence
	\[
	\underline{d}(\mathcal{N}_0)\leq \limsup_{k \to \infty}\frac{n_k}{m_k}\leq C(\limsup_{k \to \infty}\ln (\lambda_k) - \ln (\lambda_w^{-1}))\leq 0,
	\]
for some constant $C\geq 0$. Thus $x$ is not frequently hypercyclic for $\lambda_0 B_w$.
\end{proof}

\medskip

We then deduce the following.

\begin{coro}Let $B_w$ be a weighted shift on $\ell ^p(\N)$ and let $\Lambda$ be a subset of $(0,+\infty)$ with at least two elements. We assume that $\lambda_w=r_{p,w}$. Then
	\[
	\bigcap_{\lambda \in \Lambda}FHC(\lambda B_w)\neq \emptyset
	\]
if and only if $\Lambda$ is relatively compact in $(r_{p,w}^{-1},+\infty)$.
\end{coro}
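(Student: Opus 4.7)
The plan is to obtain the corollary as a direct combination of Corollary~\ref{CorB_w} and Proposition~\ref{corooptmult}, exploiting the equality $\lambda_w = r_{p,w}$ to sharpen the necessary condition so that it matches the sufficient one.

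For the sufficient direction, I would argue as follows. Assume $\Lambda$ is (countable and) relatively compact in $(r_{p,w}^{-1},+\infty)$. Then $\Lambda$ is in particular a countable relatively compact subset of $(r_{p,w}^{-1},+\infty)$, so Corollary~\ref{CorB_w}\eqref{eqrefrappel} directly yields $\bigcap_{\lambda \in \Lambda} FHC(\lambda B_w) \neq \emptyset$. There is nothing extra to prove here, and the equality $\lambda_w = r_{p,w}$ is not even needed for this implication.

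For the necessary direction, assume $\bigcap_{\lambda \in \Lambda} FHC(\lambda B_w) \neq \emptyset$. First, by \cite[Proposition 6.4]{BayIMRN} recalled at the beginning of Paragraph~\ref{necessaryfirst}, the set $\Lambda$ must be countable. Next, Proposition~\ref{optthmmult} forces $\Lambda$ to be bounded, for if $\Lambda$ were unbounded the intersection would be empty. Finally, Proposition~\ref{corooptmult} applies (since $\Lambda$ has at least two elements) and gives $\lambda_w^{-1} < \inf(\Lambda)$, because otherwise $\lambda_w^{-1} \geq \inf(\Lambda)$ would again imply $\bigcap_{\lambda\in\Lambda} FHC(\lambda B_w) = \emptyset$. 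Invoking the standing assumption $\lambda_w = r_{p,w}$, this reads $r_{p,w}^{-1} < \inf(\Lambda)$. Combined with the boundedness of $\Lambda$, we conclude that $\Lambda$ is a bounded subset of $(r_{p,w}^{-1},+\infty)$ whose infimum lies strictly above $r_{p,w}^{-1}$, hence it is relatively compact in $(r_{p,w}^{-1},+\infty)$.

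There is no real obstacle to this proof: the statement is essentially an assembly of previously proven facts. The only point worth noting is that the assumption $\lambda_w = r_{p,w}$ is precisely what is needed to close the gap between the sufficient condition of Corollary~\ref{CorB_w}\eqref{eqrefrappel}, phrased in terms of $r_{p,w}^{-1}$, and the necessary condition of Proposition~\ref{corooptmult}, phrased in terms of $\lambda_w^{-1}$, so that the two bounds coincide and yield an equivalence.
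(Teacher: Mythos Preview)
Your proposal is correct and follows exactly the route the paper intends: the corollary appears immediately after Proposition~\ref{corooptmult} with only the words ``We then deduce the following'', so the argument is precisely the combination of Corollary~\ref{CorB_w}\eqref{eqrefrappel}, Proposition~\ref{optthmmult}, Proposition~\ref{corooptmult}, and the countability necessary condition from \cite{BayIMRN}, glued together via the hypothesis $\lambda_w=r_{p,w}$. Your parenthetical remark about countability is apt, as the stated equivalence tacitly requires it for the sufficient direction.
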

	
The question whether the last corollary holds true for any weighted shift remains open. More precisely,

\begin{quest}Does the conclusion of Proposition \ref{corooptmult} hold true if $\lambda _w^{-1}$ is replaced by $r_{p,w}^{-1}$?
\end{quest}

\medskip

We conclude by applying the results of this paragraph in order to exhibit explicit frequently hypercyclic weighted shifts which share no frequently hypercyclic vector.

\begin{coro}\label{FHCmaisnoncommun}There exist two frequently hypercyclic weighted shifts on $\ell^p(\N)$, $1\leq p< +\infty$, with no common frequently hypercyclic vector.
\end{coro}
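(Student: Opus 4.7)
The plan is to produce a single frequently hypercyclic weighted shift $B_w$ on $\ell^p(\mathbb{N})$ whose spectral radius satisfies $r(B_w)=1$, and then to invoke Proposition~\ref{optthmmult} for the pair $\Lambda=\{1,2\}$: since then $\inf\Lambda=1=1/r(B_w)$ and $|\Lambda|=2$, the proposition immediately yields $FHC(B_w)\cap FHC(2B_w)=\emptyset$. Because $2B_w$ is itself a weighted shift (with weight sequence $(2w_n)_{n\geq 1}$), this exhibits the required pair.

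For the construction of $w$, I would take the telescoping weight $w_n=((n+1)/n)^{2/p}$ for $n\geq 1$, so that $w_1\cdots w_n=(n+1)^{2/p}$. Then
\[
\sum_{n\geq 1}\frac{1}{(w_1\cdots w_n)^p}=\sum_{n\geq 1}\frac{1}{(n+1)^2}<\infty,
\]
which, by \cite[Corollary~9.14]{gp}, means that $B_w$ satisfies the Frequent Hypercyclicity Criterion on $\ell^p(\mathbb{N})$ and is therefore frequently hypercyclic. The same inequality with $(2w_n)_n$ in place of $(w_n)_n$ is even smaller, so $2B_w$ is frequently hypercyclic as well. To compute the spectral radius, observe that $\|B_w^n\|=\sup_{k\geq 0}w_{k+1}\cdots w_{k+n}$; as the weight sequence $(w_n)$ is decreasing, the supremum is attained at $k=0$ and equals $(n+1)^{2/p}$. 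The spectral radius formula then gives $r(B_w)=\lim_n (n+1)^{2/(pn)}=1$.

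With this weight in hand, Proposition~\ref{optthmmult} applied to $T=B_w$ and $\Lambda=\{1,2\}\subset(0,+\infty)$ (which has two elements and satisfies $1/r(B_w)=\inf\Lambda$) directly yields $FHC(B_w)\cap FHC(2B_w)=\emptyset$. Hence $B_w$ and $2B_w$ are two frequently hypercyclic weighted shifts on $\ell^p(\mathbb{N})$ sharing no common frequently hypercyclic vector.

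There is essentially no obstacle: the deep content---the fact that a common hypercyclic vector of $B_w$ and $2B_w$ cannot be frequently hypercyclic for both when $1/r(B_w)$ belongs to $\Lambda$---is already contained in the second half of the proof of Proposition~\ref{optthmmult}, exactly as anticipated in Remark~\ref{remarkpropopt}. The only real task is to exhibit an FHC weighted shift whose spectral radius coincides with $1/\inf\Lambda$, which the elementary telescoping weight above accomplishes.
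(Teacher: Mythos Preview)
Your proof is correct and follows essentially the same approach as the paper: construct a frequently hypercyclic weighted shift with spectral radius~$1$ via a telescoping weight, then apply the obstruction for $\Lambda=\{1,\lambda\}$ with $\lambda>1$. The only cosmetic differences are that the paper uses the weight $w_n=((n+1)/n)^2$ (which also works for all $p\geq 1$) and invokes Proposition~\ref{corooptmult} rather than Proposition~\ref{optthmmult}; since $\lambda_w=r_w=1$ for this monotone weight, the two propositions are interchangeable here.
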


\begin{proof}Let $(w_n)_{n \geq 1} = \big((\frac{n+1}{n})^2\big)_{n \geq 1}$. Since $(w_n)_{n \geq 1}$ is decreasing to $1$, one has $r_w ^{-1}=r_{p,w}^{-1}=\lambda_w^{-1}=1$. Moreover, $B_w$ is frequently hypercyclic, since $\sum _{n\geq 1}(w_1\ldots w_n)^{-1}<\infty$. Thus, applying Proposition \ref{corooptmult} with $\Lambda =\{1,\lambda\}$, $\lambda >1$, we get $FHC(B_w)\cap FHC(\lambda B_w)=\emptyset$.
\end{proof}

\begin{rem} The proof of Corollary \ref{FHCmaisnoncommun} shows a bit more striking fact: for any monotonic weight $w$ converging to $w_{\infty}>0$, $w_{\infty}^{-1} B_w$ shares a common frequently hypercyclic vector with none of its multiple (different from itself).
\end{rem}

\subsection{Other examples}

In this paragraph, we apply our general common frequent hypercyclicity criterion (Theorem \ref{thmgeneral}) to classical frequently universal sequences of operators which are not weighted shifts.

Since almost all the classical examples of frequently hypercyclic operators satisfy the Frequent Hypercyclicity Criterion, the range of applications of Theorem \ref{thmmult} is quite large.

\begin{ex}[Differential operators on $H(\C)$] Let $D$ be the differentiation operator on $H(\C)$, $D(f)=f'$. Costakis and Mavroudis showed \cite{CosMav} that for any non-constant polynomial $P$, $P(D)$ satisfies Bayart and Matheron's criterion (Theorem \ref{BayMathcommon}) with $a_{P(D)}(X_0,S)=0$ and $b_{P(D)}(X_0,S)=+\infty$ for some dense subset $X_0$ of $H(\mathbb{C})$ and some right inverse $S$ of $P(D)$ on $X_0$. Thus, with the frequently hypercyclic version of the Le\'on-M\"uller Theorem and Theorem \ref{thmmult}, we can deduce that
\[
\bigcap_{\lambda \in \Lambda}FHC(\lambda P(D))\neq \emptyset,
\]
for any countable relatively compact non-empty subset $\Lambda$ of $\C^*$.
\end{ex}

We shall now focus on applications of Theorem \ref{thmgeneral} to families of operators which are not multiples of a single one. 

\begin{ex}[Adjoint of a multiplication operator on the Hardy space] We denote by $\D:=\{z\in \C:\, |z|<1\}$ the unit disc, by $H^{\infty}$ the space of bounded analytic functions in $\D$, and by $H^2$ the classical Hardy space,
\[
H^2:=\left\{f(z)=\sum _{k\geq 0}a_kz^k\in H(\D):\, \Vert f \Vert_2 := \bigg(\sum _{k\geq 0}|a_k|^2\bigg)^{1/2}<\infty\right\}.
\]
We recall that $H^2$ and $H^{\infty}$ are Banach spaces, endowed respectively with $\Vert \cdot \Vert _2$ and the $\sup$-norm $\Vert \cdot \Vert _{\infty}$. Let $\Phi \in H^{\infty}$ 
be such that $\Phi$ is not outer and $1/\Phi \in H^{\infty}$. We denote by $M_{\Phi}:H^2\to H^2$ the multiplication operator with symbol $\Phi$, defined by, $M_{\Phi}(f)=\Phi f$ for $f \in H^2$, and by $M_{\Phi}^*$ its adjoint. It is known \cite{bm} that $\lambda M_{\Phi}^*$ is frequently hypercyclic on $H^2$ for any $\lambda>\Vert 1/\Phi\Vert _{\infty}$ and that
\[
\bigcap_{\lambda >\Vert 1/\Phi\Vert _{\infty}}HC(\lambda M_{\Phi}^*)
\]
is a dense $G_{\delta}$-subset of $H^2$ \cite{GalPar}.

Now, let us write the inner-outer decomposition $\Phi=u\theta$, with $u$ outer and $\theta$ the non-constant inner part of $\Phi$. Let us define $X_0:=\cup_{n\geq 1}K_n$ with $K_n:=H^2\ominus \theta^nH^2$. Then $X_0$ is the generalized kernel of $M_{\Phi}^*$ and is dense in $H^2$. Moreover, if we define $S:=M_{1/u}^*M_{\theta}$, then $M_{\Phi}^*S={\rm Id}$ and $\Vert S \Vert = \Vert 1/\Phi\Vert _{\infty}$. We refer e.g., to the proof of \cite[Theorem 3.1]{GalPar} for the details concerning the previous claims. It is also known that $r(M_{\Phi}^*)=\Vert \Phi \Vert _{\infty}$. Thus we have
\[
\Vert \Phi \Vert _{\infty}^{-1} = r(M_{\Phi}^*)^{-1}\leq a_{M_{\Phi}^*}(X_0,S)\leq \Vert S \Vert = \Vert \Phi^{-1}\Vert _{\infty}
\]
and $b_{M_{\Phi}^*}(X_0,S)=+\infty$. Therefore, Theorem \ref{thmmult} directly implies that
\[
\bigcap_{\lambda \in \Lambda}FHC(\lambda M_{\Phi}^*)\neq \emptyset,
\]
whenever $\Lambda$ is a countable relatively compact non-empty subset of $(\Vert 1/\Phi \Vert _{\infty},+\infty)$.

\medskip

In fact, we can deduce from Corollary \ref{CFHC} the following more general result.
\begin{prop}Let $\{\Phi_{\lambda}:\,\lambda \in \Lambda\}$ be a countable family of bounded analytic functions in $\D$ with the same non-constant inner factor $\theta$. We assume that
\[
\sup\{\Vert 1/\Phi_{\lambda} \Vert _{\infty}: \lambda \in \Lambda\}<1\quad {\rm and} \quad \sup\{\Vert \Phi_{\lambda}/\Phi_{\mu} \Vert _{\infty}: \lambda,\mu \in \Lambda\}<\infty.
\]
Then
\[
\bigcap _{\lambda \in \Lambda}FHC(M^*_{\phi_{\lambda}})\neq \emptyset.
\]
\end{prop}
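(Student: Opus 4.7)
The plan is to apply Corollary~\ref{CFHC} to the operators $T_\lambda:=M_{\Phi_\lambda}^*$, $\lambda\in\Lambda$, acting on $H^2$. Writing the inner-outer factorisation $\Phi_\lambda=u_\lambda\theta$ (the inner factor being the same $\theta$ for all $\lambda$ by hypothesis), I would take $X_0:=\bigcup_{n\geq 1}K_n$ with $K_n=H^2\ominus\theta^n H^2$ (dense in $H^2$) and set $S_\lambda:=M_{1/u_\lambda}^*M_\theta$. Since $|\theta|=1$ a.e.\ on $\partial\D$, the assumptions translate into the uniform bounds $\rho:=\sup_\lambda\Vert 1/u_\lambda\Vert_\infty=\sup_\lambda\Vert 1/\Phi_\lambda\Vert_\infty<1$ and $M:=\sup_{\lambda,\mu}\Vert u_\lambda/u_\mu\Vert_\infty=\sup_{\lambda,\mu}\Vert\Phi_\lambda/\Phi_\mu\Vert_\infty<+\infty$.

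Conditions (1) and (4) of Corollary~\ref{CFHC} follow the lines of the single-operator computation recalled in the text: $T_\lambda S_\lambda=M_\theta^*M_\theta=I$ by the isometry of $M_\theta$, the inclusion $S_\lambda(K_n)\subset K_{n+1}$ is a direct inner-product check, the first series in (1) is in fact finite uniformly in $\lambda$ because $T_\lambda^n x=(M_{u_\lambda}^*)^n(M_\theta^*)^n x=0$ for $x\in K_N$ and $n\geq N$, and $\Vert S_\lambda\Vert\leq\Vert M_{1/u_\lambda}^*\Vert\cdot\Vert M_\theta\Vert\leq\rho$ dominates the second series by the uniform geometric series $\sum\rho^n$.

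The crux of the argument is the identity
\[
T_i^m S_j^m=M_{(u_i/u_j)^m}^*,\qquad i,j\in\Lambda,\ m\in\N,
\]
which I would establish by induction on $m$, the step relying on
\[
T_i\,M_{(u_i/u_j)^k}^*\,S_j=M_{\theta(u_i/u_j)^{k+1}}^*M_\theta=M_{(u_i/u_j)^{k+1}}^*M_\theta^*M_\theta=M_{(u_i/u_j)^{k+1}}^*,
\]
the decisive collapse being $M_\theta^*M_\theta=I$. Granting this, $T_i^m S_j^{m+n}(x)=M_{(u_i/u_j)^m}^*S_j^n(x)$ satisfies $\Vert T_i^m S_j^{m+n}(x)\Vert\leq M^m\rho^n\Vert x\Vert$; choosing $c>1$ with $M\rho^{c-1}<1$ (possible since $\rho<1$), the tail over $F\subset\{n\geq\max(N_0,(c-1)m)\}$ is dominated by a geometric sum bounded by $(M^{1/(c-1)}\rho)^{N_0}/(1-\rho)$ uniformly in $m$ and $i\neq j$, which can be made arbitrarily small by enlarging $N_0$; this verifies condition~(2). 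For condition~(3) I would factor $T_i^m S_j^{m-n}=T_i^n\,M_{(u_i/u_j)^{m-n}}^*$: the co-analytic Toeplitz factor preserves each $K_N$, while $T_i^n$ annihilates $K_N$ as soon as $n\geq N$, so for $x\in K_N$ every term with $n\geq N$ vanishes and the tails are trivially zero.

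The main obstacle is precisely the displayed identity: the analytic and co-analytic Toeplitz pieces forming $T_i^m$ and $S_j^{m+n}$ do not commute, so the factors cannot simply be rearranged; only the inductive collapse of consecutive $M_\theta^*M_\theta$ pairs produces the clean bound in $M^m$ rather than a bound involving the individual sup-norms $\Vert u_\lambda\Vert_\infty$, which would be fatal because the hypotheses do not provide any uniform control on $\Vert u_\lambda\Vert_\infty$.
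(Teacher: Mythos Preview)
Your proof is correct and follows essentially the same route as the paper's. The setup (the common model spaces $K_n$, the right inverses $S_\lambda=M_{1/u_\lambda}^*M_\theta$, the reduction to Corollary~\ref{CFHC}) is identical, and the bound $\Vert T_i^m S_j^{m+n}(x)\Vert\leq M^m\rho^n\Vert x\Vert$ you obtain is exactly the paper's, only reached slightly differently: the paper gets it by a one-line duality computation $\langle T_\lambda^m S_\mu^{m+n}f,g\rangle=\langle f,(u_\lambda/u_\mu)^m(\bar\theta/u_\mu)^n g\rangle$, while you establish the operator identity $T_i^mS_j^m=M_{(u_i/u_j)^m}^*$ by induction. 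For condition~(3) the paper writes $T_\lambda^m S_\mu^{m-n}=(T_\lambda S_\mu)^{m-n}T_\lambda^n$ (using that co-analytic Toeplitz operators commute) so that $T_\lambda^n$ hits $x$ directly, whereas you keep the factors in the order $T_i^n\,M_{(u_i/u_j)^{m-n}}^*$ and invoke that co-analytic Toeplitz operators preserve the model spaces $K_N$; both observations are immediate and lead to the same vanishing. A tiny slip: the denominator in your geometric tail bound should be $1-M^{1/(c-1)}\rho$, not $1-\rho$.
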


\begin{proof}We aim to apply Corollary \ref{CFHC}. By the comment after its statement, we need only check items (2)--(4) and the first part of (1). Since the functions $\Phi_{\lambda}$ share the same non-constant inner factor, the set $X_0:=\cup_{n\geq 1}K_n$ with $K_n:=H^2\ominus \theta^nH^2$ is the generalized kernel of each $M^*_{\phi_{\lambda}}$. More precisely, for any $f\in X_0$, there exists $n\geq 1$ such that $(M^*_{\phi_{\lambda}})^n(f)=0$ for any $\lambda \in \Lambda$. Let $u_{\lambda}$ denote the outer factor of $\Phi_{\lambda}$, $\lambda \in \Lambda$. As recalled above, setting $S_{\lambda}:=M^*_{1/u_\lambda}M_{\theta}$, we have $M^*_{\phi_{\lambda}}S_{\lambda}={\rm Id}$, $\lambda \in \Lambda$. So the first part of (1) and (4) of Corollary \ref{CFHC} are satisfied. Similarly, for any positive integers $n\leq m$ we have $(M^*_{\phi_{\lambda}})^mS_{\mu}^{m-n}=(M^*_{\phi_{\lambda}}S_{\mu})^{m-n}(M^*_{\phi_{\lambda}})^n$, $\lambda ,\mu \in \Lambda$, which gives (3) of Corollary \ref{CFHC}.
	
Let us prove (2) of Corollary \ref{CFHC}. We set
\[
a:=\sup\{\Vert 1/\Phi_{\lambda}^{-1} \Vert _{\infty}: \lambda \in \Lambda\}<1\quad {\rm and} \quad M:=\sup\{\Vert \Phi_{\lambda}/\Phi_{\mu} \Vert _{\infty}: \lambda,\mu \in \Lambda\}<\infty.
\]	
Let $\lambda \neq \mu \in \Lambda$ and $f\in X_0$.
By assumption, there exists $b\in (a,1)$ such that for any $m\in \N$ and $n\geq(c-1)m$,
\begin{align*}
\Vert (M^*_{\phi_{\lambda}})^m(S_{\mu}^{m+n}(f)) \Vert _2 & 
= \sup_{\Vert g \Vert _2=1}\left<(M^*_{\phi_{\lambda}})^m(S_{\mu}^{m+n}(f)),g\right>\\
& = \sup_{\Vert g \Vert _2=1}\left<f,\left(\frac{u _{\lambda}}{u_{\mu}}\right)^m \left(\frac{\bar{\theta}}{u_{\mu}}\right)^n g\right>\\
& \leq \Vert f \Vert _2 \left\Vert \frac{u _{\lambda}}{u_{\mu}}\right\Vert _{\infty}^m \left\Vert \frac{1}{u_{\mu}}\right\Vert _{\infty} ^n\\
& \leq \Vert f \Vert _2 \left(Mb^{(c-1)}\right)^{m}b^{n-(c-1)m}\left(\frac{a}{b}\right)^{n}.
\end{align*}
Since $b\in (a,1)$, (2) of Corollary \ref{CFHC} then follows by taking $c>1$ so that $Mb^{(c-1)}\leq 1$.
\end{proof}
\end{ex}

\section{Periodic points at the service of common frequent hypercyclicity}\label{sec-per}

Despite its apparent unpleasant formulation, the classical Frequent Hypercyclicity Criterion turns out to be very useful for checking that natural operators are frequently hypercyclic (and chaotic). We saw in the previous section that it fits well to formulating easy-to-use sufficient conditions for common frequent hypercyclicity. In \cite{GMM}, the authors provided a quite appealing new criterion for frequent hypercyclicity and chaos involving the periodic points of the operator \cite[Theorem 5.31]{GMM}. It is shown there that all the operators which satisfy the Frequent Hypercyclicity Criterion do also satisfy the assumptions of this new one. However, it quickly appears from its statement that it is not so simple to use when dealing with natural operators (e.g., weighted shifts). In fact, it is very well adapted to certain type of operators - the so called operators of $C$-type - which were introduced in \cite{MenetTransactions} and extensively developed \cite[Section 6]{GMM} in order to build several counter-examples.

In this section, we provide with a sufficient condition for common frequent hypercyclicity derived from \cite[Theorem 5.31]{GMM}. In the whole section, $X$ is a separable Banach space. We recall that a vector $x\in X$ is a periodic point for $T\in \LL(X)$ if there exists $p\in \N$ such $T^p(x)=x$. Let us denote by $\text{Per}(T)$ the set of all periodic points for $T$. For $x\in \text{Per}(T)$ we denote by $p_T(x)$ the period of $x$ for $T$ (\textit{i.e.}, the smallest positive integer $p$ such that $T^p(x)=x$).

\begin{thm}\label{thmFHCGMM}Let $X$ be a separable Banach space and $(T_s)_{s\geq 1}$ a countable family of bounded linear operators on $X$. We assume that there exist a dense linear subspace $X_0$ of $X$ with $T_s(X_0)\subset X_0$ and $X_0\subset \text{Per}(T_s)$ for any $s\geq 1$, and a constant $\alpha \in (0,1)$ such that the following property holds true: for every $s,q\geq 1$, every $\varepsilon>0$ and every $x,y\in X_0$,
there exist $z\in X_0$ and integers $n,d\geq 1$ such that, for every $1\leq t \leq q$,
	\begin{enumerate}
		\item $d$ is a multiple of $p_{T_{t}}(y)$ and of $p_{T_{t}}(z)$;\label{Cond1}
		\item $\Vert T_t^k(z)\Vert<\varepsilon$ for every $0\leq k\leq \alpha d$;\label{Cond2}
		\item $\Vert T_s^{n+k}(z)-T_s^k(x)\Vert < \varepsilon$ for every $0\leq k\leq \alpha d$.\label{Cond3}
	\end{enumerate}
Then there exists a common frequently hypercyclic vector for the family  $(T_s)_{s \geq 1}$.
\end{thm}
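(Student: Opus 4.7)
The plan is to construct a common frequently hypercyclic vector as an unconditionally convergent series $x=\sum_{i\ge 1}z_i$, where the summands $z_i\in X_0$ are obtained by inductively invoking the assumption. This follows the scheme of the original single-operator criterion of Grivaux--Matheron--Menet (\cite[Theorem 5.31]{GMM}), adapted to handle countably many operators simultaneously.

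\medskip

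\noindent\textbf{Setup and construction.} Fix a dense sequence $(y_l)_{l\ge 1}$ in $X_0$, a bijection $i\mapsto(s_i,l_i)$ of $\N^{*}$ onto $\N^{*}\times\N^{*}$ with $s_i$ growing slowly with $i$ (e.g.\ $s_i\le\lfloor\log_2 i\rfloor$, via a standard diagonal enumeration), an increasing sequence $q_i\to+\infty$ (say $q_i=i$), and a fast-decreasing summable sequence $(\varepsilon_i)_{i\ge 1}$. Proceed by induction on $i$, maintaining the partial sum $w_{i-1}:=\sum_{j<i}z_j\in X_0$. At step $i$, invoke the assumption with $s:=s_i$, $q:=q_i$, $y:=w_{i-1}$, $x:=y_{l_i}$ and $\varepsilon:=\varepsilon_i$, obtaining $z_i\in X_0$ and $n_i,d_i\ge 1$ satisfying $(1)$--$(3)$. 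Using the $d_i$-periodicity of $z_i$ under $T_{s_i}$, we may assume $0\le n_i<d_i$.

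\medskip

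\noindent\textbf{Return-set analysis.} Condition $(2)$ at $k=0$ gives $\Vert z_i\Vert<\varepsilon_i$, so the series converges to some $x\in X$. Fix $s\ge 1$ and $y_l$, and pick $i_0$ with $(s_{i_0},l_{i_0})=(s,l)$ and $s\le q_{i_0}$. For $m$ in the arithmetic progression $E:=\{n_{i_0}+jd_{i_0}:j\ge 0\}$ of lower density $1/d_{i_0}>0$, split
\[
T_s^m(x)=T_s^m(z_{i_0})+T_s^m(w_{i_0-1})+\sum_{j>i_0}T_s^m(z_j).
\]
By $(1)$, $z_{i_0}$ and $w_{i_0-1}$ are both $d_{i_0}$-periodic under $T_s$, so $T_s^m(z_{i_0})=T_s^{n_{i_0}}(z_{i_0})$, which is within $\varepsilon_{i_0}$ of $y_l$ by $(3)$. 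For the tail, condition $(2)$ at step $j>i_0$ (valid since $s\le q_j=j$) provides $\Vert T_s^k(z_j)\Vert<\varepsilon_j$ on $[0,\alpha d_j]$, while periodicity gives the uniform bound $\Vert T_s^k(z_j)\Vert\le\Vert T_s\Vert^{p_s(z_j)-1}\varepsilon_j$ on the remaining $k$; by a Cantor-diagonal-style choice of $(\varepsilon_j)$ (anticipating the finitely many $T_s$ that will matter at each later step), the tail is made $\ll\varepsilon_{i_0}$.

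\medskip

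\noindent\textbf{Main obstacle.} The crux is the middle contribution $T_s^{n_{i_0}}(w_{i_0-1})=\sum_{j<i_0}T_s^{n_{i_0}}(z_j)$, which need not be small because the hypothesis does not a priori constrain $n_{i_0}$. The idea is to replace $E$ by a sub-progression along which this sum is small. For each $j<i_0$ with $q_j\ge s$ (which holds for all but finitely many $j$ since $q_j\to+\infty$), condition $(2)$ at step $j$ yields $\Vert T_s^{n_{i_0}+k}(z_j)\Vert<\varepsilon_j$ whenever $(n_{i_0}+k)\bmod d_j\in[0,\alpha d_j]$; since $(3)$ permits replacing $n_{i_0}$ by $n_{i_0}+k$ for any $k\in[0,\alpha d_{i_0}]$, and each residue constraint excludes a $(1-\alpha)$-fraction of admissible $k$'s, the set of simultaneously good $k$'s has relative density at least $\alpha^{i_0-1}$. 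For the finitely many $j<i_0$ with $q_j<s$, the bound $\Vert T_s^k(z_j)\Vert\le\Vert T_s\Vert^{p_s(z_j)-1}\varepsilon_j$ is absorbed by preemptively choosing $\varepsilon_j$ small enough at step $j$. The resulting sub-progression has lower density at least $\alpha^{i_0}/d_{i_0}>0$, and along it $\Vert T_s^m(x)-y_l\Vert$ is bounded by a fixed multiple of $\varepsilon_{i_0}$; as $y_l$ ranges over a dense sequence this yields common frequent hypercyclicity. The careful coordination of the enumeration, the $q_j$, the $\varepsilon_j$ and the residue conditions on $n_{i_0}$ modulo $(d_j)_{j<i_0}$ constitutes the main technical bookkeeping of the proof.
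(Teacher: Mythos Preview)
Your proposal has a genuine gap in the tail estimate, and the workaround you suggest for the ``middle contribution'' does not survive contact with that gap.

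\medskip

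\textbf{The tail.} For $j>i_0$ you claim the uniform bound $\Vert T_s^k(z_j)\Vert\le\Vert T_s\Vert^{p_s(z_j)-1}\varepsilon_j$ for $k$ outside $[0,\alpha d_j]$, and propose to absorb it by choosing $\varepsilon_j$ small enough in advance. But $p_s(z_j)$ (which can be as large as $d_j$) is produced \emph{by} the hypothesis after $\varepsilon_j$ has already been fixed; nothing prevents $\Vert T_s\Vert^{d_j-1}\varepsilon_j$ from being enormous. Since your return set is an arithmetic progression $\{n_{i_0}+k+jd_{i_0}:j\ge0\}$, its elements eventually hit every residue class modulo each later $d_{j'}$, so you will inevitably need the bad bound. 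No diagonal choice of $(\varepsilon_j)$ repairs this.

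\medskip

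\textbf{The middle term.} Your density-of-good-$k$ argument is also not justified: the residue constraints modulo the various $d_j$ are not independent, and you have no relation among the $d_j$'s that would force a nonempty intersection. Even granting existence of a good $k$, note that condition~(3) then gives $T_s^{n_{i_0}+k}(z_{i_0})\approx T_s^k(y_l)$, not $y_l$, so you would also need $k$ to be a multiple of $p_{T_s}(y_l)$---yet another constraint you have not accounted for.

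\medskip

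\textbf{What the paper does differently.} Two ideas eliminate both obstacles at once. First, at step $j$ the hypothesis is applied with the target $x=y_j-\sum_{i<j}z_i$ (not $x=y_j$) and with $y=\sum_{i<j}z_i$; one then arranges $n_j$ to be a multiple of $p_{T_s}\bigl(\sum_{i<j}z_i\bigr)$, so that
\[
T_s^{n_j+k}\Bigl(\sum_{i\le j}z_i\Bigr)=T_s^{n_j+k}(z_j)+T_s^{k}\Bigl(\sum_{i<j}z_i\Bigr)\approx T_s^k(y_j)
\]
directly from condition~(3). There is no ``middle contribution'' to fight. Second, a growth condition $\alpha d_j>4d_{j-1}$ is imposed, and the return sets are \emph{not} arithmetic progressions but carefully nested finite unions $A_{m,j}$ built so that every element $n$ satisfies $n\le\alpha d_i$ for all $i$ beyond the relevant index; the tail $\sum_{i>\cdot}\Vert T_s^n(z_i)\Vert$ is then bounded by $\sum 2^{-i}$ straight from condition~(2), with no appeal to $\Vert T_s\Vert^{p_s(z_i)}$. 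The positive lower density of these sets requires a separate (nontrivial) counting argument, imported from \cite[Fact~5.35]{GMM}.
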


If the family $(T_s)_{s\geq 1}$ is reduced to a single operator, then Theorem \ref{thmFHCGMM} is exactly \cite[Theorem 5.31]{GMM}. Observe that Theorem \ref{thmFHCGMM} does not apply to families of multiples of a single operator since $\text{Per}(T)\cap \text{Per}(\lambda T)=\emptyset$ in general.

It is natural to wonder whether two operators satisfying \cite[Theorem 5.31]{GMM} have a common frequently hypercyclic vectors. Corollary \ref{FHCmaisnoncommun} tells us that this is not the case: there exists two multiples of the same weighted shift, which satisfy the classical Frequent Hypercyclicity Criterion, and so \cite[Theorem 5.31]{GMM} as well, and which do not have common frequently hypercyclic vectors. These two operators do not have the same periodic vectors. Now we do not know whether two operators satisfying \cite[Theorem 5.31]{GMM} with the same $X_0$ and $\alpha$ automatically share a frequently hypercyclic vectors. Note that applying Theorem \ref{thmFHCGMM} demand more than ``each $T_s$ satisfies the assumptions of \cite[Theorem 5.31]{GMM} with the same $X_0$ and $\alpha$''.

\begin{proof}[Proof of Theorem \ref{thmFHCGMM}]The proof is greatly inspired by that of \cite[Theorem 5.31]{GMM}. Let $(x_l)_{l\geq 1}$ be a sequence of vectors in $X_0$, dense in $X$, and let $(I_{p}(s))_{p,s\geq 1}$ be a partition of $\N\setminus\lbrace 0 \rbrace$ such that each set $I_p(s)$ is infinite and has bounded gaps. Let us denote by $r_{p}(s)$ the maximal size of a gap for $I_p(s)$. We set $I_p(s):=\{j_m(p,s):\,m\geq 1\}$, where $(j_m(p,s))_{m\geq 1}$ is increasing. Remark that, by definition, $j_{m+1}(p,s)-j_m(p,s) \leq r_p(s)$ for every $m\geq1$. We also let $(y_j)_{j\in\N}$ be given by $y_j=x_p$ if $j\in I_p(s)$. Now we use the assumptions of the theorem to build, by induction on $j \geq 1$, a sequence $(z_{j})_{j \geq 1}$ of vectors in $X_0$ and increasing sequences of positive integers $(d_{j})_{j\geq 1}$ and $(n_{j})_{j\geq 1}$ such that the following properties hold, for any $p, s \geq 1$ and $j\in I_p(s)$:
	\begin{enumerate}[(i)]
		\item \label{11} $d_{j}$ is a multiple of $p_{T_t}(\sum_{i=1}^{j-1}z_{i})$ and $ p_{T_{t}}(z_{j})$ for every $t\geq1$ so that there exists $q\geq1$ such that $I_q(t) \cap [1, j] \neq \emptyset$;
		\item \label{22}$\Vert T_{t}^k(z_{j}) \Vert < 2 ^{-j}$ for every $0\leq k \leq \alpha d_{j}$ and every $t\geq1$ so that there exists $q\geq1$ such that $I_q(t) \cap [1, j] \neq \emptyset$;
		\item \label{33}$\Vert T_s^{n_{j}+k}(z_{j}) - T_s^k(y_j-\sum_{i=1}^{j-1}z_{i})\Vert < 2^{-j}$ for every $0\leq k \leq \alpha d_{j}$;
		\item \label{44}$n_{j}$ is a multiple of $p_{T_s}(\sum_{i=1}^{j-1}z_{i})$ and $\alpha d_{j} < n_{j} \leq d_{j}$;
		\item \label{55} $\alpha d_{j} > 4 d_{j-1}$, where $d_0 := 0$.
	\end{enumerate}
Conditions \eqref{11} and \eqref{22} are made possible by the assumptions of the theorem and the fact that the set
\[
\bigcup_{q\geq 1}\bigcup_{1\leq i\leq j}\{t\geq 1:\,i\in I_q(t)\}
\]
is finite for any $j\geq 1$. The choice of $n_j$ is possible thanks to several elementary facts which are explained in the first lines of the proof of Theorem 5.31 in \cite{GMM}.

By \eqref{22}, the sum $z:= \sum_{i\geq 1} z_{i}$ defines a vector in $X$. Let us check that $z$ is frequently hypercyclic for every $T_s$, $s\geq 1$. To do so, we will show that every $x_p$, $p\geq 1$, can be approximated as close as desired by iterates $T_s^n(z)$ where the exponents $n$ form a set with positive lower density.

Let thus $p,s\geq 1$ be fixed. For notational simplicity, we will denote $j_m(p,s)$ by $j_m$, for any $m \geq 1$. Then, for every $m\geq1$ we define by induction on $j \geq 0$ a family of sets $(A_{m,j})_{0\leq j < j_{m+1}-j_m}$ as follows:
\[
A_{m,0}:=\bigg\{n_{j_m}+kd_{j_m}+k'p_{T_s}(x_p):\,0\leq k' \leq \frac{\alpha d_{j_m}}{p_{T_s}(x_p)},\,0\leq k\leq \frac{\alpha d_{j_m+1}}{d_{j_m}}-2\bigg\},
\]
and, for $1\leq j< j_{m+1}-j_m$,
\[
A_{m,j}:=\bigcup_{1\leq k\leq \frac{\alpha d_{j_m+j+1}}{d_{j_m+j}}-1}(A_{m,j-1}+kd_{j_m+j}).
\]
As in the proof of \cite[Theorem 5.31, Equation (16)]{GMM}, one easily checks by induction that $\max (A_{m,j})\leq \alpha d_{j_m+j+1}$, for any $0 \leq j < j_{m + 1} - j_m$ and $m \geq 1$. Moreover, by \cite[Fact 5.35]{GMM} (in fact exactly reproducing its proof), we have $\dlow(A)>0$ where
\[
A:= \bigcup _{m\geq1}\bigcup_{0\leq j < j_{m+1}-j_m}A_{m,j}.
\]
Note that $A$ depends on the fixed parameters $s$ and $p$. Thus to finish the proof of the theorem, we need only prove that for every $m\geq 1$ and every $0\leq j < j_{m+1}-j_m$, we have
\[
\Vert T_s^n (z) -x_p\Vert \leq 2^{-(j_m-1)},\quad n\in A_{m,j}.
\]
This shall be proven as in \cite[Fact 5.34]{GMM} up to some modifications. Let $m\geq 1$ and $0\leq j < j_{m+1}-j_m$, we first observe that for any $n\in A_{m,j}$ we have
\[
\Vert T_s^n (z) -x_p\Vert \leq \bigg\Vert T_s^n \bigg(\sum _{i=1}^{j_m+j}z_{i}\bigg) - x_p \bigg\Vert + \sum _{i>j_m+j} \bigg\Vert T_s^n (z_{i}) \bigg\Vert.
\]
Since $\max (A_{m,j})\leq \alpha d_{j_m+j+1}$, we have $n\leq \alpha d_{j_m+j+1}\leq \alpha d_{i}$ for every $i>j_m+j$ and $n \in A_{m, j}$, and it follows from \eqref{22} that
\[
\sum_{i>j_m+j} \Vert T_s^n (z_{i}) \Vert < \sum_{i>j_m+j} 2^{-i}\leq \frac{1}{2^{j_m+j}}.
\]
To conclude we now turn to proving that for every $m\geq 1$, $0\leq j < j_{m+1}-j_m$ and $n\in A_{m,j}$,
\begin{equation}\label{eqGMM}
\bigg\Vert T_s^n \bigg(\sum _{i=1}^{j_m+j}z_{i}\bigg) - x_p \bigg\Vert \leq \sum_{i=0}^j2^{-(j_m+i)}.
\end{equation}
Let $m \geq 1$. We proceed by induction on $0\leq j < j_{m+1}-j_m$. If $n\in A_{m,0}$ (\textit{i.e.}, $j=0$) then $n=n_{j_m}+kd_{j_m}+k'p_{T_s}(x_p)$ with $0\leq k\leq \frac{\alpha d_{j_m+1}}{d_{j_m}}-2$ and $0\leq k'\leq \frac{\alpha d_{j_m}}{p_{T_s}(x_p)}$, and by \eqref{11} and \eqref{44}
\begin{align*}
T_s^n \bigg(\sum _{i=1}^{j_m}z_{i}\bigg) - x_p & = T_s^{n_{j_m}+kd_{j_m}+k'p_{T_s}(x_p)}\bigg(\sum _{i=1}^{j_m}z_{i}\bigg)-x_p\\
& = T_s^{n_{j_m}+k'p_{T_s}(x_p)}(z_{j_m})-T_s^{k'p_{T_s}(x_p)}\bigg(x_p - \sum_{i=1}^{j_m-1}z_{i}\bigg).
\end{align*}
By \eqref{33} we get
\[
\bigg\Vert T_s^n \bigg(\sum _{i=1}^{j_m}z_{i}\bigg) - x_p \bigg\Vert \leq 2^{-j_m}.
\]

Assume now that \eqref{eqGMM} has been proven up to $j-1$ for some $1\leq j< j_{m+1}-j_m$. For $n\in A_{m,j}$, we write $n=kd_{j_m+j}+l$ with $l\in A_{m,j-1}$ and
\[
0\leq k \leq \frac{\alpha d_{j_m+j+1}}{d_{j_m+j}}-1.
\]
Then, by \eqref{11} we have
\begin{align*}T_s^n \bigg(\sum _{i=1}^{j_m+j}z_{i}\bigg) - x_p & = T_s^{kd_{j_m+j}+l}\bigg(\sum _{i=1}^{j_m+j}z_{i}\bigg)-x_p\\
& = T_s^{l}\bigg(\sum _{i=1}^{j_m+j-1}z_{i}\bigg)-x_p + T_s^{l}(z_{j_m+j}).
\end{align*}
Since $l\in A_{m,j-1}$, we deduce from the induction hypothesis and \eqref{22} that
\[
\bigg\Vert T_s^n \bigg(\sum _{i=1}^{j_m+j}z_{i}\bigg) - x_p \bigg\Vert \leq \sum_{i=0}^{j-1}2^{-(j_m+i)} + 2^{-(j_m+j)},
\]
and \eqref{eqGMM} as desired.
\end{proof}

\subsection*{Application to operators of $C$-type}

We will apply Theorem \ref{thmFHCGMM} to operators of $C$-type on $\ell^p(\N)$, $1\leq p<\infty$. First we shall recall their definition, following the formalism of \cite[Section 6]{GMM}. As usual, we denote by $(e_k)_{k\in \N}$ the unit sequence of $\ell^p(\N)$. An operator of $C$-type is associated with a data of four parameters $v,w,\varphi$ and $b$:

\begin{itemize}
	\item $v=(v_n)_{n\geq 1}$ is a sequence of non-zero complex numbers with $\sum_{n\geq 1}|v_n|<\infty$;
	\item $w=(w_n)_{n\geq 1}$ is a sequence of complex numbers such that
	\[
	0<\inf_{n\geq 1}|w_n|\leq \sup_{n\geq 1}|w_n|<\infty;
	\]
	\item $\varphi : \N \to \N$ is such that $\varphi(0)=0$, $\varphi(n)<n$ for every $n\geq 1$, and the set $\varphi^{-1}(\lbrace l \rbrace)$ is infinite for every $l\geq 0$;
	\item $b=(b_n)_{n\geq 0}$ is an increasing sequence of integers with $b_0=0$ and $b_{n+1}-b_n$ is a multiple of $2(b_{\varphi(n)+1}-b_{\varphi(n)})$ for every $n\geq 1$.
\end{itemize}

Now, for a data $v,w,\varphi$ and $b$ as above, the operator of $C$-type $T_{v,w,\varphi,b}$ is defined by
\[
T_{v,w,\varphi,b}(e_k)=\left\{\begin{array}{ll}
w_{k+1}e_{k+1} & \text{if } k\in[b_n,b_{n+1}-1),\,n\geq 0\\
v_ne_{b_{\varphi(n)}}-\left(\prod _{j=b_n+1}^{b_{n+1}-1}w_j\right)^{-1}e_{b_n} & \text{if } k=b_{n+1}-1,\, n\geq 1\\
-\left(\prod _{j=b_0+1}^{b_1-1}w_j\right)^{-1}e_0 & \text{if } k=b_1-1.
\end{array}\right.
\]
Here, by convention, an empty product is equal to $0$. From now on, we assume that the condition
\[
\inf_{n\geq 0}\prod_{b_n<j<b_{n+1}}|w_j|>0
\]
is satisfied. As shown by \cite[Fact 6.2]{GMM}, this assumption ensures that $T_{v,w,\varphi,b}$ is a bounded operator from $\ell^p(\N)$ into itself. It can also be checked that each element of $c_{00}$ is a periodic point for $T_{v,w,\varphi,b}$, more precisely 
\[T_{v,w,\varphi,b}^{2(b_{n+1}-b_n)}e_k=e_k\text{ if }k\in[b_n,b_{n+1}), n\geq0\]
see \cite[Fact 6.4]{GMM}.

In order to deal with frequent hypercyclicity, the authors of \cite{GMM} introduce a subclass of operators of $C$-type. As we are interested in common frequent hypercyclicity, we will work within this subclass. It consists in those operators of $C$-type for which the data $v,w,\varphi,b$ has the following special structure: for every $k\geq 1$,
\begin{itemize}
	\item $\varphi(n)=n-2^{k-1}$ for every $n\in[2^{k-1},2^k)$;
	\item there exists $\Delta ^{(k)} \in \N$ such that the size of the block $[b_n,b_{n+1})$, \textit{i.e.} the quantity $b_{n+1}-b_n$, is equal to $\Delta^{(k)}$ for every $n\in[2^{k-1},2^k)$;
	\item there exists $v^{(k)} \in \C\setminus \{0\}$ such that $v_n=v^{(k)}$ for every $n\in[2^{k-1},2^k)$;
	\item there exists a sequence $(w_i^{(k)})_{1\leq i< \Delta^{(k)}}$ such that $w_{b_n+i}=w_i^{(k)}$ for every $1\leq i< \Delta^{(k)}$ and every $n\in[2^{k-1},2^k)$.
\end{itemize}

An operator of $C$-type which satisfies the previous conditions is called an operator of $C_+$-type. The next result is a criterion for a countable family of operators of $C_+$-type to share a common frequently hypercyclic vector.

\begin{thm}\label{TH_C_+}
	Let $(T_{v(s),w(s),\varphi,b})_{s\geq 1}$ be a countable family of operators of $C_+$-type on $\ell^p(\N)$ where $b$ does not depend on $s$. We assume that there exists a constant $\alpha>0$ such that for every $s\geq 1$, every $C\geq1$ and every $k_0\geq1$, there exists an integer $k\geq k_0$ such that, for every $0\leq n\leq \alpha \Delta^{(k)}$,
	\begin{equation}\label{Ceq2}
	\vert v^{(k)}(s)\vert \prod_{i=n+1}^{\Delta^{(k)}-1}\vert w_{i}^{(k)}(s)\vert>C.
	\end{equation}
	If, for any $s,t\geq 1$, there exists a constant $K_{s,t}>0$ such that for any $r\geq \rho \geq 1$,
	\begin{equation}\label{Ceq3}
	\left\vert\frac{w_\rho(t)w_{\rho+1}(t)\ldots w_r(t)}{w_\rho(s)w_{\rho+1}(s)\ldots w_r(s)}\right\vert\leq K_{s,t},
	\end{equation}
	then $\bigcap_{s\geq 1}FHC(T_{v(s),w(s),\varphi,b})$ is non-empty.
\end{thm}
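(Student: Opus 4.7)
The plan is to apply Theorem \ref{thmFHCGMM} with $X_0 := c_{00}(\N)$ and some constant $\alpha' \in (0, \alpha/2)$, where $\alpha$ is that of hypothesis \eqref{Ceq2}. By \cite[Fact 6.4]{GMM}, $c_{00}(\N)$ is dense in $\ell^p(\N)$, invariant under each $T_s$, and included in $\mathrm{Per}(T_s)$, with $p_{T_s}(e_j) = 2(b_{n+1}-b_n)$ for $j \in [b_n, b_{n+1})$; since $b$ and $\varphi$ are common to all the $T_s$, so are these periods.

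Fix $s, q \geq 1$, $\varepsilon > 0$, and $x, y \in c_{00}(\N)$ supported in $[0, b_M)$. Using \eqref{Ceq2}, pick $k$ so large that $2^{k-1} \geq M$, that $\alpha \Delta^{(k)} \geq 2\alpha' \Delta^{(k)} + \max_{l<M}(b_{l+1}-b_l)$, and that $|v^{(k)}(s)\prod_{j=n+1}^{\Delta^{(k)}-1}w_j^{(k)}(s)| > C$ for every $0 \leq n \leq \alpha \Delta^{(k)}$, where $C > 0$ will be chosen at the end. Set $N_l := 2^{k-1} + l$ for $0 \leq l < M$, so that $\varphi(N_l) = l$, and define
\[
z := \sum_{l=0}^{M-1}\sum_{i=0}^{(b_{l+1}-b_l)-1} \frac{x_{b_l+i}}{v^{(k)}(s)\prod_{j=i+1}^{\Delta^{(k)}-1}w_j^{(k)}(s)\prod_{j=1}^{i}w_j^{(k'_l)}(s)}\,e_{b_{N_l}+i},
\]
where $k'_l$ is the level of $l$ (with the convention $w_j^{(k'_0)}(s) := w_j(s)$ for $l = 0$). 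Take $n := \Delta^{(k)}$ and $d := 2\Delta^{(k)}$. Tracking each basis vector through the single jump of $T_s^{\Delta^{(k)}}$ (which occurs at step $\Delta^{(k)}-i$ when starting from $e_{b_{N_l}+i}$), one checks that
\[
T_s^{\Delta^{(k)}} e_{b_{N_l}+i} = v^{(k)}(s)\prod_{j=i+1}^{\Delta^{(k)}-1}w_j^{(k)}(s)\prod_{j=1}^{i}w_j^{(k'_l)}(s)\, e_{b_l+i} - e_{b_{N_l}+i},
\]
whence $T_s^{n} z = x - z$. Condition \eqref{Cond1} holds because the divisibility hypothesis on $b$ guarantees that $\Delta^{(k)}$ is a multiple of $2\Delta^{(k')}$ for every $k' < k$, so $d$ is a common $T_t$-period of $y$ and $z$; condition \eqref{Cond3} reduces to \eqref{Cond2} via the identity $T_s^{n+j} z - T_s^j x = -T_s^j z$.

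The main obstacle is \eqref{Cond2}, namely $\|T_t^j z\| < \varepsilon$ for all $0 \leq j \leq \alpha' d$ and all $t \in \{s\} \cup \{1,\ldots,q\}$. Since $\alpha' d < \Delta^{(k)} - \max_{l<M}(b_{l+1}-b_l)$ for our choice of $k$, the iterate $T_t^j$ applied to any $e_{b_{N_l}+i}$ in the support of $z$ propagates forward within its block, giving $T_t^j e_{b_{N_l}+i} = \prod_{r=i+1}^{i+j} w_r^{(k)}(t)\, e_{b_{N_l}+i+j}$. By disjointness of the supports, hypothesis \eqref{Ceq3} applied to the range $[b_{N_l}+i+1, b_{N_l}+i+j]$ (with $K_{s,s} := 1$), and telescoping of the resulting $s$-products, we obtain
\[
\|T_t^j z\|^p \leq K_{s,t}^p \sum_{l=0}^{M-1}\sum_{i=0}^{(b_{l+1}-b_l)-1} \frac{|x_{b_l+i}|^p}{\bigl|v^{(k)}(s)\prod_{r=i+j+1}^{\Delta^{(k)}-1}w_r^{(k)}(s)\bigr|^p \bigl|\prod_{r=1}^{i}w_r^{(k'_l)}(s)\bigr|^p}.
\]
By \eqref{Ceq2} applied with $n = i+j \leq \alpha \Delta^{(k)}$, the first factor in the denominator exceeds $C$, while the second is bounded below by $(\inf_r |w_r(s)|)^{p\max_{l<M}(b_{l+1}-b_l)} > 0$, a constant depending only on $s$ and $M$. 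Choosing $C$ sufficiently large (allowed by \eqref{Ceq2}) makes $\|T_t^j z\| < \varepsilon$ uniformly in $j$ and $t$, completing the verification of the hypotheses of Theorem \ref{thmFHCGMM}.
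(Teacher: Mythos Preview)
Your argument is correct, and it takes a genuinely different route from the paper's. Both proofs apply Theorem~\ref{thmFHCGMM} with $X_0=c_{00}$ and build $z$ inside the level-$k$ blocks $[b_{2^{k-1}+l},b_{2^{k-1}+l+1})$, but the choices differ in a way that matters:

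The paper places the support of $z$ near the \emph{end} of each block (at positions $b_{2^{k-1}+l+1}-n+j-b_l$) and takes $n=\Delta^{(k)}-1$. It then computes $T_t^m(z)$ and $T_s^{n+m}(z)-T_s^m(x)$ separately, invoking Fact~6.8 of \cite{GMM} for the second estimate. You instead place $z$ at the \emph{beginning} of each block (at $b_{N_l}+i$) and take $n=\Delta^{(k)}$. Because $T_s^{2\Delta^{(k)}}$ fixes each such basis vector, $T_s^{\Delta^{(k)}}$ acts as an involution on them, and your explicit formula for $T_s^{\Delta^{(k)}}e_{b_{N_l}+i}$ yields the exact identity $T_s^n z = x - z$. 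This collapses condition~\eqref{Cond3} of Theorem~\ref{thmFHCGMM} to condition~\eqref{Cond2} for $t=s$, so only a single estimate needs to be carried out. That is a real simplification over the paper's argument.

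Two small points worth making explicit: you are silently using that one may assume $\alpha<1$ (otherwise the ``stays within the block'' argument for $i+j\leq\Delta^{(k)}-1$ does not follow from $i+j\leq\alpha\Delta^{(k)}$); the paper states this WLOG reduction at the outset. Also, your divisibility claim that $\Delta^{(k)}$ is a multiple of $2(b_{m+1}-b_m)$ for every $m<2^{k-1}$ is indeed a direct consequence of the $C_+$-type definition (since $\varphi$ maps $[2^{k-1},2^k)$ onto $[0,2^{k-1})$), not a chain of divisibilities---your phrasing is fine but it is worth saying why this holds.
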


Note that since $b$ does not depend on $s$, by definition, the integers $\Delta^{(k)}$, $k\geq 1$, do not depend on $s$ either. It is plainly checked that condition \eqref{Ceq2} is equivalent to saying that each $T_s$ satisfies the assumption of \cite[Theorem 6.9]{GMM}. In particular, if $\{T_{v(s),w(s),\varphi,b}:\,s\in \N\}$ is reduced to a single operator (\textit{i.e.}, $v(s)$ and $w(s)$ do not depend on $s$), then the previous criterion is exactly \cite[Theorem 6.9]{GMM}.

For the proof of Theorem \ref{TH_C_+}, we recall \cite[Fact 6.8]{GMM} below.

\begin{fact}\label{Fact6.8}
	Let $T$ be an operator of $C_+$-type on $\ell^{p}(\N)$ and $k\geq1$. For any $l<2^{k-1}$ and $1\leq m\leq \Delta^{(k)}$, we have 
	\[
	T^{m}(e_{b_{2^{k-1}+l+1}-m})
	=v^{(k)}\left(\prod_{i=\Delta^{(k)}-m+1}^{\Delta^{(k)}-1}w_{i}^{(k)}\right)e_{b_l}-\left(\prod_{i=1}^{\Delta^{(k)}-m}w_{i}^{(k)}\right)^{-1}e_{b_{2^{k-1}+l}}.\]
\end{fact}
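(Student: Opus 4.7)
The plan is a direct bookkeeping computation based on iterating the piecewise definition of $T = T_{v,w,\varphi,b}$. First I would set $n := 2^{k-1}+l$, which by hypothesis $l<2^{k-1}$ lies in $[2^{k-1},2^k)$, so the $C_+$-type parameters on the block $[b_n,b_{n+1})$ are precisely $\Delta^{(k)}$, $v^{(k)}$, and $(w_i^{(k)})_{1\leq i<\Delta^{(k)}}$, and $\varphi(n)=l$. In particular the starting vector $e_{b_{n+1}-m}$ lives in this block, since $1\leq m\leq\Delta^{(k)}$ forces $b_n\leq b_{n+1}-m\leq b_{n+1}-1$.

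Next I would split according to whether $m=1$ or $m\geq 2$. For $m=1$ the index $b_{n+1}-1$ is the endpoint of the block, so the second case of the definition applies directly and, using $v_n=v^{(k)}$ together with the change of variable $j=b_n+i$ in $\prod_{j=b_n+1}^{b_{n+1}-1} w_j=\prod_{i=1}^{\Delta^{(k)}-1}w_i^{(k)}$, the claimed formula with its empty product convention falls out immediately. For $m\geq 2$, I would iterate the first case of the definition $m-1$ times: at each intermediate index $b_{n+1}-m+j$ ($0\leq j<m-1$) we are still strictly below $b_{n+1}-1$, so $T$ acts as the forward weighted shift. Writing $w_{b_n+i}=w_i^{(k)}$ and telescoping yields
\[
T^{m-1}\bigl(e_{b_{n+1}-m}\bigr) = \left(\prod_{i=\Delta^{(k)}-m+1}^{\Delta^{(k)}-1} w_i^{(k)}\right) e_{b_{n+1}-1}.
\]

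Finally I would apply one more step of $T$ to $e_{b_{n+1}-1}$ using the second case of the definition, substitute $\varphi(n)=l$, $v_n=v^{(k)}$, and $\prod_{j=b_n+1}^{b_{n+1}-1}w_j=\prod_{i=1}^{\Delta^{(k)}-1}w_i^{(k)}$, and then simplify the coefficient of $e_{b_n}=e_{b_{2^{k-1}+l}}$ by cancelling the accumulated product against the denominator, leaving $\bigl(\prod_{i=1}^{\Delta^{(k)}-m} w_i^{(k)}\bigr)^{-1}$. This matches the stated identity exactly. The whole argument is a one-line induction on $m$ once the indexing is unpacked; the only real obstacle is the clerical one of keeping the three index conventions (absolute position $b_n+i$, intra-block position $i$, and distance-from-end $\Delta^{(k)}-m+i$) consistent so that the empty-product edge cases $m=1$ and $m=\Delta^{(k)}$ automatically reduce to the correct formula.
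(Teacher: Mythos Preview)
Your proof is correct: the computation is exactly the natural one, and the index bookkeeping (in particular that $n:=2^{k-1}+l\in[2^{k-1},2^k)$ so that $\varphi(n)=l$, $v_n=v^{(k)}$, $b_{n+1}-b_n=\Delta^{(k)}$, and $w_{b_n+i}=w_i^{(k)}$) is handled carefully, including the edge cases $m=1$ and $m=\Delta^{(k)}$. Note that the paper does not actually supply a proof of this fact; it merely quotes it from \cite[Fact~6.8]{GMM}, so there is nothing to compare against beyond observing that your argument is the standard direct verification.
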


\begin{proof}[Proof of Theorem \ref{TH_C_+}]
Without loss of generality, we can assume that $0<\alpha <1$. It suffices to check that the assumptions of Theorem \ref{thmFHCGMM} are satisfied. Since every finitely supported vector is periodic for $C_+$-type operators, we define $X_0:=c_{00}$ and fix $x,y\in X_0$, $\varepsilon>0$ and $s,q\geq 1$. There exists $k_0\geq1$ such that
\[x=\sum_{l<2^{k_0}}\sum_{j=b_l}^{b_{l+1}-1}x_je_j.\]
By condition \eqref{Ceq2}, for any $C>0$, there exists $k\geq k_0$ such that
\[
\vert v^{(k)}(s)\vert \prod_{i=n+1}^{\Delta^{(k)}-1}\vert w_{i}^{(k)}(s)\vert>C,\quad 0\leq n \leq \alpha\Delta^{(k)}.
\]
Since $v(s)$ and $w(s)$ are bounded, upon choosing $C$ large enough, we may assume that $k$ is so large that the following holds true:
\begin{enumerate}[(a)]
	\item  $\Delta^{(k_0)}< \min((1-\frac{\alpha}{2})\Delta^{(k)},\frac{\alpha}{2} \Delta^{(k)}-1)$\label{firsteq2};
	\item $\Delta^{(k)}$ is a multiple of $p_{T_{t}}(y)$ for any $t\geq 1$\label{firsteq1}.
\end{enumerate}
Indeed, as noted after the definition of $C$-type operators, since the period of any vector in $X_0$ can be explicitely expressed in terms of the sequence $b$, \eqref{firsteq1} is satisfied whenever $y$ is supported in $[0,b_{2^{k-1}}[$. Let us now set $n:=\Delta^{(k)}-1$, $d:=2\Delta^{(k)}$ and
\[
z:=\sum_{l<2^{k_0}}\sum_{j=b_l}^{b_{l+1}-1}x_j\left(v^{(k)}(s)\prod_{i=j-b_l+2}^{\Delta^{(k)}-1}w_{i}^{(k)}(s)\right)^{-1}\left(\prod_{i=1}^{j-b_l}w_{b_l+i}(s)\right)^{-1}e_{b_{2^{k-1}+l+1}-n+j-b_l}.
\]
Like for \eqref{firsteq1} above, $d$ is a multiple of $p_{T_s}(z)$ for any $s\geq 1$. Thus condition \eqref{Cond1} of Theorem \ref{thmFHCGMM} is satisfied.

Let us now fix $0\leq m\leq \frac{\alpha d}{4}$ and $1\leq t\leq q$. We observe that for every $l<2^{k_0}$ and $b_l\leq j\leq b_{l+1}-1$, we have
\[
b_{2^{k-1}+l+1}-n+j-b_l+m\in [b_{2^{k-1}+l},b_{2^{k-1}+l+1}).
\]
Indeed, by definition $b_{2^{k-1}+l+1}-b_{2^{k-1}+l}=\Delta^{(k)}$ and by \eqref{firsteq2}, $-\Delta^{(k)}\leq -n+j-b_l+m < 0$. So for every $t\geq1$, we have
\[
T_{t}^{m}(e_{b_{2^{k-1}+l+1}-n+j-b_l})
=\left(\prod_{i=\Delta^{(k)}-n+j-b_l+1}^{\Delta^{(k)}-n+j-b_l+m}w_{i}^{(k)}(t)\right)e_{b_{2^{k-1}+l+1}-n+j-b_l+m},\]
hence the expression
\begin{multline}
T_{t}^{m}(z)=\sum_{l<2^{k_0}}\sum_{j=b_l}^{b_{l+1}-1}x_j\left(v^{(k)}(s)\prod_{i=j-b_l+m+2}^{\Delta^{(k)}-1}w_{i}^{(k)}(s)\right)^{-1}\left(\prod_{i=1}^{j-b_l}w_{b_l+i}(s)\right)^{-1}\\\left(\prod_{i=j-b_l+2}^{j-b_l+m+1}\frac{w_{i}^{(k)}(t)}{w_{i}^{(k)}(s)}\right)e_{b_{2^{k-1}+l+1}-n+j-b_l+m}.
\end{multline}
Using \eqref{firsteq2}, we know that $0\leq j-b_l+m+1\leq \alpha \Delta ^{(k)}$ which, by \eqref{Ceq2}, \eqref{Ceq3} and the definition of $C_+$-type operators, implies that for some constant $A>0$ (independent of $k$),
\[
\Vert T_{t}^{m}(z)\Vert \leq \Vert x\Vert C^{-1}\max_{1\leq t'\leq q}(K_{s,t'})A^{\Delta^{(k_0)}}.
\]
Up to choosing $C$ large enough, we get \eqref{Cond2} in Theorem \ref{thmFHCGMM}.

Let us now estimate the norm of $T_{s}^{n+m}(z)-T_{s}^{m}(x)$ for $0\leq m\leq \frac{\alpha d}{4}$. By Fact \ref{Fact6.8}, we obtain
\begin{multline*}
T_{s}^{n-(j-b_l)}(e_{b_{2^{k-1}+l+1}-n+j-b_l})
=v^{(k)}(s)\left(\prod_{i=\Delta^{(k)}-n+j-b_l+1}^{\Delta^{(k)}-1}w_{i}^{(k)}(s)\right)e_{b_l}\\
-\left(\prod_{i=1}^{\Delta^{(k)}-n+j-b_l}w_{i}^{(k)}(s)\right)^{-1}e_{b_{2^{k-1}+l}}.
\end{multline*}
Applying $T_{s}^{j-b_l}$ yields
\begin{multline*}
T_{s}^{n} (e_{b_{2^{k-1}+l+1}-n+j-b_l})
=\left(v^{(k)}(s)\prod_{i=j-b_l+2}^{\Delta^{(k)}-1}w_{i}^{(k)}(s)\right)\left(\prod_{i=1}^{j-b_l}w_{b_l+i}(s)\right)e_{j}\\-\left(w_{j-b_l+1}^{(k)}(s)\right)^{-1}e_{b_{2^{k-1}+l}+j-b_l}.
\end{multline*}
Moreover, since $m+j-b_l<\Delta^{(k)}$, we have
\[
T_{s}^{m} (e_{b_{2^{k-1}+l}+j-b_l})
=\left(\prod_{i=j-b_l+1}^{j-b_l+m}w_{i}^{(k)}(s)\right)e_{b_{2^{k-1}+l}+j-b_l+m},
\]
hence
\begin{multline*}
T_{s}^{n+m}(e_{b_{2^{k-1}+l+1}-n+j-b_l})
=\left(v^{(k)}(s)\prod_{i=j-b_l+2}^{\Delta^{(k)}-1}w_{i}^{(k)}(s)\right)\left(\prod_{i=1}^{j-b_l}w_{b_l+i}(s)\right)T_{s}^{m}(e_{j})\\
-\left(w_{j-b_l+1}^{(k)}(s)\right)^{-1}\left(\prod_{i=j-b_l+1}^{j-b_l+m}w_{i}^{(k)}(s)\right)e_{b_{2^{k-1}+l}+j-b_l+m}.
\end{multline*}
By definition of $z$, it follows that
\begin{multline*}
T_{s}^{n+m}(z)\\=T_{s}^{m}(x)-\sum_{l<2^{k_0}}\sum_{j=b_l}^{b_{l+1}-1}x_j\left(v^{(k)}(s)\prod_{i=j-b_l+m+1}^{\Delta^{(k)}-1}w_{i}^{(k)}(s)\right)^{-1}\left(\prod_{i=1}^{j-b_l}w_{b_l+i}(s)\right)^{-1}e_{b_{2^{k-1}+l}+j-b_l+m}.
\end{multline*}
By assumption, we thus get
\[
\Vert T_{s}^{n+m}(z)-T_{s}^{m}(x) \Vert
\leq \Vert x\Vert C^{-1}A^{\Delta^{(k_0)}}
\]
and condition \eqref{Cond3} of Theorem \ref{thmFHCGMM} with $\alpha'=\frac{\alpha}{4}$, as desired.
\end{proof}

\begin{rem}\label{Rembdd} It is clear from the proof that the conclusion of Theorem \ref{TH_C_+} remains true under the following weaker (but less nice) assumption: we assume that there exists a constant $0<\alpha<1$ such that for every integers $s,q\geq 1$, every $C\geq1$ and every $k_0\geq1$, there exists $k\geq k_0$ such that for every $0\leq n\leq \alpha \Delta^{(k)}$,
	\begin{equation}\label{Ceq2bis}
	\vert v^{(k)}(s)\vert \prod_{i=n+1}^{\Delta^{(k)}-1}\vert w_{i}^{(k)}(s)\vert>C;
	\end{equation}
	\begin{equation}\label{Ceq1}
	\displaystyle\sup_{\substack{1\leq t\leq q\\ 0\leq j< \Delta^{(k_0)}\\ 0\leq m\leq \alpha\Delta^{(k)}}} \left(\vert v^{(k)}(s)\vert\prod_{i=j+2}^{\Delta^{(k)}-1}\vert w_{i}^{(k)}(s)\vert\right)^{-1}\left(\prod_{i=j+2}^{j+m+1}\vert w_{i}^{(k)}(t)\vert\right)<\frac{1}{C}.
	\end{equation}

Moreover, \eqref{Ceq1} is satisfied whenever there exists a constant $0<\alpha<1$ such that for every integers $s,q\geq 1$, every $C\geq1$ and every $k_0\geq1$, there exists $k\geq k_0$ and $A>1$ such that
\begin{equation}\label{eqrem1} 
\sup_{i,t\geq 1}\max\left(\vert w_i(t)\vert,\frac{1}{\vert w_i(t)\vert}\right)\leq A;
\end{equation}
and 
\begin{equation}\label{eqrem2}
\displaystyle\sup_{\substack{1\leq t\leq q\\ 0\leq m\leq \alpha\Delta^{(k)}}} \left(\vert v^{(k)}(s)\vert\prod_{i=1}^{\Delta^{(k)}-1}\vert w_{i}^{(k)}(s)\vert\right)^{-1}\left(\prod_{i=1}^{m+1}\vert w_{i}^{(k)}(t)\vert\right)<\frac{1}{C}.
\end{equation}
Thus the conclusion of Theorem \ref{TH_C_+} remains true under the assumptions of Remark \ref{Rembdd} with \eqref{Ceq1} replaced by  \eqref{eqrem1} and \eqref{eqrem2}.
\end{rem}

It turns out that for a certain subclass of operators of $C_+$-type, for which \eqref{eqrem1} automatically holds, some rather simple condition for frequent hypercyclicity is given in \cite{GMM}. We shall now see that a similar condition for a family of operators in this subclass implies \eqref{eqrem2} and thus common frequent hypercyclicity.

\subsection*{Application to operators of $C_{+,1}$-type}

Operators of $C_{+,1}$-type are introduced in \cite[Section 6.5]{GMM} as those operators of $C_+$-type for which the parameters $v$ and $w$ satisfy the following extra condition: for every $k\geq 1$,
\[
v^{(k)}=2^{-\tau^{(k)}}\quad \text{and} \quad w_i^{(k)}=\left\{\begin{array}{ll}2 & \text{ if }1\leq i\leq \delta^{(k)}\\1 & \text{ if }\delta^{(k)}<i<\Delta^{(k)}\end{array}\right.,
\]
where $\tau:=(\tau^{(k)})_{k\geq 1}$ and $\delta:=(\delta^{(k)})_{k\geq 1}$ are two strictly increasing sequences of integers such that $\delta^{(k)}<\Delta^{(k)}$, $k\geq 1$. Within this class of operators of $C_{+,1}$-type, that we simply denote by $T_{\tau,\delta,\varphi,b}$, examples of frequently hypercyclic operators which are not ergodic were provided in \cite{GMM}.

\begin{thm}\label{TH_C_{+,1}}
Let $(T_{\tau(s),\delta(s),\varphi,b})_{s\geq 1}$ be a countable family of operators of $C_{+,1}$-type on $\ell^p(\N)$ where $b$ does not depend on $s$. If 
\[
\inf_{t\geq1}\limsup_{k\to\infty}\frac{\delta^{(k)}(t)-\tau^{(k)}(t)}{\Delta^{(k)}}>0,
\]
then $\bigcap_{s\geq 1}FHC(T_{\tau(s),\delta(s),\varphi,b})$ is non-empty.
\end{thm}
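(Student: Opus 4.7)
The strategy is to apply Theorem~\ref{TH_C_+} in the relaxed form stated at the end of Remark~\ref{Rembdd}: it suffices to exhibit $\alpha\in(0,1)$ such that for every $s,q\geq 1$, every $C\geq 1$ and every $k_{0}\geq 1$, some $k\geq k_{0}$ satisfies the three conditions \eqref{Ceq2bis}, \eqref{eqrem1} and \eqref{eqrem2}. For operators of $C_{+,1}$-type every weight $w_{i}^{(k)}(t)$ lies in $\{1,2\}$, so \eqref{eqrem1} is trivially satisfied with $A=2$.

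By the very definition of the $C_{+,1}$-structure one computes
\[
|v^{(k)}(s)|\prod_{i=n+1}^{\Delta^{(k)}-1}|w_{i}^{(k)}(s)|=2^{\max(\delta^{(k)}(s)-n,\,0)-\tau^{(k)}(s)},\qquad |v^{(k)}(s)|\prod_{i=1}^{\Delta^{(k)}-1}|w_{i}^{(k)}(s)|=2^{\delta^{(k)}(s)-\tau^{(k)}(s)},
\]
while $\prod_{i=1}^{m+1}|w_{i}^{(k)}(t)|\leq 2^{m+1}$ for every $t\geq 1$ and every $m\geq 0$. Set $\alpha_{0}:=\inf_{t\geq 1}\limsup_{k\to\infty}\frac{\delta^{(k)}(t)-\tau^{(k)}(t)}{\Delta^{(k)}}$, which is strictly positive by hypothesis, and fix any $\alpha\in(0,\alpha_{0}/4)$.

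Now fix $s,q\geq 1$, $C\geq 1$ and $k_{0}\geq 1$. Since $\limsup_{k\to\infty}\frac{\delta^{(k)}(s)-\tau^{(k)}(s)}{\Delta^{(k)}}\geq\alpha_{0}$ and $\Delta^{(k)}\to\infty$ (because $\delta^{(k)}<\Delta^{(k)}$ and $(\delta^{(k)})_{k\geq 1}$ is strictly increasing), one may pick $k\geq k_{0}$ arbitrarily large while simultaneously ensuring that $\delta^{(k)}(s)-\tau^{(k)}(s)\geq\tfrac{\alpha_{0}}{2}\Delta^{(k)}$. For such a $k$ one then has $\delta^{(k)}(s)\geq\tfrac{\alpha_{0}}{2}\Delta^{(k)}>\alpha\Delta^{(k)}\geq n$ for every $0\leq n\leq\alpha\Delta^{(k)}$, so the first product above equals $2^{\delta^{(k)}(s)-n-\tau^{(k)}(s)}\geq 2^{(\alpha_{0}/2-\alpha)\Delta^{(k)}}$, which is larger than $C$ once $k$ is large enough; this gives \eqref{Ceq2bis}. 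Similarly, for every $0\leq m\leq\alpha\Delta^{(k)}$ and $1\leq t\leq q$, the ratio appearing in \eqref{eqrem2} is bounded by $2^{1+\alpha\Delta^{(k)}-(\delta^{(k)}(s)-\tau^{(k)}(s))}\leq 2^{1-(\alpha_{0}/2-\alpha)\Delta^{(k)}}$, which is smaller than $1/C$ once $k$ is sufficiently large. Applying Theorem~\ref{TH_C_+} through Remark~\ref{Rembdd} then produces a common frequently hypercyclic vector for the family $(T_{\tau(s),\delta(s),\varphi,b})_{s\geq 1}$.

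The main conceptual point, rather than any technical difficulty, is that Theorem~\ref{TH_C_+}/Remark~\ref{Rembdd} asks, for each fixed index $s$, for the existence of a single suitable $k$, so the $\inf$-over-$t$/$\limsup$-over-$k$ shape of the hypothesis matches exactly what is needed. The supremum over $1\leq t\leq q$ inside \eqref{eqrem2} causes no trouble because the crude estimate $\prod_{i=1}^{m+1}|w_{i}^{(k)}(t)|\leq 2^{m+1}$ is $t$-independent; this is precisely why a \emph{separate} control of $\delta^{(k)}(t)-\tau^{(k)}(t)$ for each $t$ (rather than some joint condition over pairs $(s,t)$) already suffices.
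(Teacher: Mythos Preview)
Your proof is correct and follows essentially the same approach as the paper: both verify the conditions \eqref{Ceq2bis}, \eqref{eqrem1}, \eqref{eqrem2} of Remark~\ref{Rembdd} by choosing $\alpha$ a fixed fraction of $\alpha_0:=\inf_t\limsup_k(\delta^{(k)}(t)-\tau^{(k)}(t))/\Delta^{(k)}$, then picking $k$ large along the subsequence where $\delta^{(k)}(s)-\tau^{(k)}(s)\geq\tfrac{\alpha_0}{2}\Delta^{(k)}$ and using the crude $t$-independent bound $\prod_{i=1}^{m+1}|w_i^{(k)}(t)|\leq 2^{m+1}$. The paper takes $\alpha<\alpha_0/2$ where you take $\alpha<\alpha_0/4$, and it defers \eqref{Ceq2bis} to \cite{GMM} while you verify it explicitly, but the argument is otherwise the same.
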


\begin{proof}
Observe that \eqref{eqrem1} in Remark \ref{Rembdd} trivially holds, thus it is enough to check \eqref{eqrem2} and \eqref{Ceq2bis}. To do so, we define
\[\alpha<\min\left(1,\frac{1}{2}\inf_{t\geq1}\limsup_{k\to\infty}\frac{\delta^{(k)}(t)-\tau^{(k)}(t)}{\Delta^{(k)}}\right).\]
Let $s,k_0\geq1$ and $C\geq 1$, and let us set $n=\Delta^{(k)}-1$. Since $\Delta^{(k)}\to\infty$ as $k\to \infty$, there exists $k\geq k_0$ such that
\[\frac{\delta^{(k)}(s)-\tau^{(k)}(s)}{\Delta^{(k)}}>2\alpha\quad \text{and}\quad \alpha\Delta^{(k)}>\log_{2}(C).\]
Then it follows from the definition of operators of $C_{+,1}$-type that \eqref{eqrem2} in Remark \ref{Rembdd} is equivalent to
\[
2^{\tau^{(k)}(s)-\delta^{(k)}(s)} \sup_{\substack{t\geq 1\\ 0\leq m\leq \alpha\Delta^{(k)}}} \prod_{i=1}^{m+1}\vert w_{i}^{(k)}(t)\vert<\frac{1}{C}.
\]
Now, we have 
\[
\sup_{\substack{t\geq 1\\ 0\leq m\leq \alpha\Delta^{(k)}}} \prod_{i=1}^{m+1}\vert w_{i}^{(k)}(t)\vert\leq 2^{\alpha\Delta^{(k)}}\leq 2^{\frac{1}{2}(\delta^{(k)}(s)-\tau^{(k)}(s))}.
\]
Hence,
\[
2^{\tau^{(k)}(s)-\delta^{(k)}(s)} \sup_{\substack{t\geq 1\\ 0\leq m\leq \alpha\Delta^{(k)}}} \prod_{i=1}^{m+1}\vert w_{i}^{(k)}(t)\vert\leq 2^{\frac{1}{2}(\tau^{(k)}(s)-\delta^{(k)}(s))}<2^{-\alpha\Delta^{(k)}}<\frac{1}{C}.
\]
It remains to check that  for every $0\leq n\leq \alpha \Delta^{(k)}$,
\[\vert v^{(k)}(s)\vert \prod_{i=n+1}^{\Delta^{(k)}-1}\vert w_{i}^{(k)}(s)\vert>C\]
which works the same as in the proof of \cite[Theorem 6.17]{GMM}.
\end{proof}

\begin{rem}
 When the family is reduced to a single operator, Theorem \ref{TH_C_{+,1}} is exactly \cite[Theorem 6.17]{GMM}.
\end{rem}

\section{Common frequent hypercyclicity with respect to densities}\label{Sec-alpha} 
We refer to \cite{Freedman} for the abstract definitions and the study of \emph{generalized lower and upper densities}. In particular it is proven there that to any sequence of non-negative real numbers $\alpha=(\alpha_k)_{k\geq 1}$ such that $\sum_{k\geq 1}\alpha_k=+\infty$, one can associate generalized lower and upper densities $\dlowg{\alpha}$ and $\dupg{\alpha}$ by the formulae
\[
\dlowg{\alpha}(E)=\liminf_{n \to \infty}\sum_{k\geq 1}\alpha_{n,k}\indicatrice{E}(k)\quad \text{and}\quad \dupg{\alpha}(E)=1-\dlowg{\alpha}(\N\setminus E),\quad E\subset \N,
\]
where $(\alpha_{n,k})_{n,k\geq 1}$ is the matrix given by 
\[
\alpha_{n,k}=\left\{\begin{array}{l}\alpha_k/(\sum_{j=1}^n\alpha_j)\hbox{ for }1\leq k\leq n,\\
0\hbox{ otherwise.}\end{array}\right.
\]
Then we also have $\dupg{\alpha}(E)=\limsup_{n \to \infty}\sum_{k=1}^{+\infty}\alpha_{n,k}\indicatrice{E}(k)$. By \cite[Lemma 2.7]{ErMo}, if we assume in addition that the sequence $(\alpha_n/(\sum_{j=1}^n\alpha_j))_{n\geq 1}$ converges to $0$, then for any set $E\subset \N$ enumerated as an increasing sequence $(n_k)_{k\geq 1}$, we have
\[
\dlowg{\alpha}(E)=\liminf_{k \to \infty}\frac{\sum_{j=1}^{k}\alpha_{n_j}}{\sum_{j=1}^{n_k}\alpha_j}.
\]

For $\alpha$ and $\beta$ two sequences as above, let us write $\alpha \lesssim \beta$ if there exists $k_0\in \N$ such that $(\alpha_k/\beta_k)_{k\geq k_0}$ is non-increasing. Then we have
\[
\dlowg{\beta}(E)\leq \dlowg{\alpha}(E)\leq \dupg{\alpha}(E) \leq \dupg{\beta}(E),\quad E\subset \N,
\]
whenever $\alpha \lesssim \beta$ (see \cite[Lemma 2.8]{ErMo}). Thus one can define scales of well-ordered densities with respect to the type of growth of the defining sequences. As we aim to study densities $\dlow_{\alpha}$ which are less than or equal to the natural one, it will be natural to assume that $\alpha$ is non-decreasing.

\medskip

From now on, a sequence $\alpha=(\alpha_k)_{k\geq 1}$ of non-negative numbers will be called \textit{completely admissible} if it satisfies the following two properties:

\begin{itemize}
	\item $\sum_{k\geq 1}\alpha_k=+\infty$;
	\item $\alpha$ is non-decreasing;
	\item $(\alpha_n/(\sum_{j=1}^n\alpha_j))_{n\geq 1}\to 0$ as $n\to \infty$.
\end{itemize}

A generalized density $\dlow_{\alpha}$ or $\dup_{\alpha}$ will be also called \textit{completely admissible} if it is associated to a completely admissible sequence $\alpha$.  Finally, the function $\varphi_{\alpha}:(0,+\infty)\to (0,+\infty)$ defined by $\varphi_{\alpha}(x)=\sum_{k\leq x}\alpha _k$, $n\geq 1$, will play an important role in the sequel.

\medskip

Several examples of generalized densities can be found in \cite{ErMo,ErMo2}. In this work, we will mainly be interested in four types of such sequences.
\begin{enumerate}\item For $0\leq \varepsilon\leq 1$, $\EE_{\varepsilon}:=(\exp(k^{\varepsilon}))_{k\geq 1}$. By a summation by parts, one can see that for $0< \varepsilon < 1$, $\varphi_{\EE_{\varepsilon}}(n) \sim \frac{n^{1-\varepsilon}}{\varepsilon}\exp(n^{\varepsilon})$ (where $u_k\sim v_k$ means $u_k/v_k\to 1$);
\item For $s\in \N\cup \{\infty\}$, $\DD_s:=(\exp(k/\log_{(s)}(k)))_{k\geq k_0}$ with $k_0$ large enough, where $\log_{(s)}=\log \circ \cdots \circ \log$, $\log$ appearing $s$ times, with the conventions $\log_{(0)}(x)=x$ and $\log_{(\infty)}(x)=1$ for any $x>0$. One can check that $\varphi_{\DD_s}(n) \sim \log_{(s)}(n)\exp (x/\log_{(s)}(n))$ for $s\in\N$ (see \cite[Remark 3.10]{ErMo2}) and $\varphi_{\DD_\infty}(n) \sim \frac{e}{e-1}\exp (n)$;
\item For all $l\geq 1$, let us consider the sequence $\mathcal{L}_l=(e^{\log(k)\log_{(l)}(k)})_{k\geq k_0}$, with $k_0$ large enough. A simple calculation leads to 
$\varphi_{\mathcal{L}_l}(n)\sim \frac{ne^{\log(n)\log_{(l)}(n)}}{\log_{(l)}(n)}$;
\item For $r\geq -1$ we shall also write $\PP_r:=(k^r)_{k\geq 1}$. Then $\varphi_{\PP_r}(k)\sim \frac{k^{r+1}}{r+1}$.
\end{enumerate}
Notice that for any $0\leq\varepsilon<1$, any $s\in\mathbb{N}$, any $l>1$ and any $r\geq 0$, 
the sequences $\EE_{\varepsilon}$,  
$\DD_s$, $\mathcal{L}_l$ and $\PP_r$ are completely admissible. Observe that the usual lower density $\dlow$ (associated to any constant sequence $(a,a,a,\ldots)$, $a>0$) corresponds to $\dlowg{\EE_0}$, $\dlowg{\DD_0}$ and $\dlowg{\PP_0}$. Later on, the sequence $\EE_1$ shall be simply denoted by $\EE$; note that $\dlowg{\EE}=\dlowg{\DD_{\infty}}$. For any $0< \delta \leq \varepsilon \leq 1$, any $s \leq t \in \N$, any $r\geq 0$ and any positive integer $l\leq l'$, we thus have
\[
\dlowg{\EE} \leq \dlowg{\DD_t} \leq \dlowg{\DD_s} \leq \dlowg{\EE_{\varepsilon}} \leq \dlowg{\EE_{\delta}} 
\leq \dlowg{\mathcal{L}_{l}}\leq  \dlowg{\mathcal{L}_{l'}}\leq\dlowg{\PP_r} \leq \dlow.
\]

\medskip

Let $X$ be a separable Fréchet space. As for frequently hypercyclic operators, we now say that a continuous linear operator on $T$ is $\alpha$-frequently hypercyclic if there exists $x\in X$ such that for any non-empty open set $U$ in $X$, $\dlowg{\alpha}(N(x,U,T))$ is positive. We denote by $FHC_{\alpha}(T)$ the set of all $\alpha$-frequently hypercyclic vectors for $T$. As proven in \cite{ErMo}, no operator can be $\EE$-frequently hypercyclic (and hence $\alpha$-frequently hypercyclic whenever $\EE \lesssim \alpha$).

\medskip

A first natural question arises:
\begin{quest}\label{q1sec3}Does common $\alpha$-frequent hypercyclicity exist for some $\alpha$?
\end{quest}

Let us recall that any operator satisfying the Frequent Universality Criterion is automatically $\alpha$-universal whenever $\alpha \lesssim \DD_s$ for some $s\geq 1$ \cite{ErMo2}. Since each of the criteria given in Section \ref{Sec2} are natural strengthenings of the Frequent Hypercyclicity Criterion, we could expect a positive answer to this question for any such $\alpha$. Moreover, it is easily seen that $FHC_{\PP_r}(T)=FHC(T)$ for any $r> -1$ (see \cite[Lemma 2.10]{ErMo}). So Question \ref{q1sec3} has a \textit{strong} positive answer for sequences with polynomial growth. In fact, the next proposition shows that for multiples of a single operator, the answer is either strongly positive, either strongly negative.

We will say that an increasing function $\varphi:(0,+\infty)\to (0,+\infty)$ satisfies the $\Delta_2$-condition if there exists a constant $K>0$ such that $\varphi(2x)\leq K\varphi(x)$ for any $x$ large enough.

\begin{thm}\label{propRomu}
Let $X$ be a separable Banach space, $T$ a bounded linear operator on $X$ and let $\alpha=(\alpha_k)_{k\geq 1}$ be a completely admissible sequence. Then
\begin{enumerate}\item \label{eqlabdens1}if $\varphi_{\alpha}$ satisfies the $\Delta_2$-condition, then $FHC(T)=FHC_{\alpha}(T)$;
	\item \label{eqlabdens2} if $\varphi_{\alpha}$ does not satisfy the $\Delta_2$-condition, then $HC(\lambda T)\cap FHC_{\alpha}(\mu T)=\emptyset$ for any $0<\lambda < \mu <+\infty$.
\end{enumerate}
\end{thm}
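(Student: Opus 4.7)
The inclusion $FHC_\alpha(T)\subseteq FHC(T)$ is automatic: since $\alpha$ is non-decreasing, the constant sequence $(1,1,\ldots)$ satisfies $(1,1,\ldots)\lesssim \alpha$, and hence $\dlowg{\alpha}\leq \dlow$. For the converse, let $x\in FHC(T)$ and let $U$ be a non-empty open subset of $X$; enumerate $E:=N(x,U,T)=\{n_1<n_2<\cdots\}$. Since $\dlow(E)=\delta>0$, one has $n_k\leq 2k/\delta$ for all $k$ large. Using the formula $\dlowg{\alpha}(E)=\liminf_{k\to\infty}\sum_{j=1}^{k}\alpha_{n_j}/\varphi_\alpha(n_k)$ recorded just before the theorem, the monotonicity of $\alpha$ together with $n_j\geq j$ yields $\sum_{j=1}^{k}\alpha_{n_j}\geq \varphi_\alpha(k)$. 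Iterating the $\Delta_2$-condition $p:=\lceil\log_2(2/\delta)\rceil$ times bounds $\varphi_\alpha(n_k)\leq \varphi_\alpha(2^p k)\leq K^p\varphi_\alpha(k)$, so that $\dlowg{\alpha}(E)\geq K^{-p}>0$ and $x\in FHC_\alpha(T)$.

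\textbf{Plan for Part (2): setup and the pairwise inequality.} Adapt the strategy of Proposition \ref{optthmmult}. Assume, for contradiction, that $x\in HC(\lambda T)\cap FHC_\alpha(\mu T)$ for some $0<\lambda<\mu$. Fix $x_0\in X$ with $\|x_0\|=1$ and set
\[
\mathcal{N}_\lambda:=\{n:\|(\lambda T)^n x\|\geq 1\}\quad\text{and}\quad \mathcal{N}_\mu:=\{m:\|(\mu T)^m x-x_0\|<1/2\}.
\]
By hypercyclicity, $\mathcal{N}_\lambda$ is infinite, and by the $\alpha$-frequent hypercyclicity assumption, $\dlowg{\alpha}(\mathcal{N}_\mu)\geq c>0$. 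For any $m\in\mathcal{N}_\mu$ and $n\in\mathcal{N}_\lambda$ with $m<n$, the identity $T^n x=T^{n-m}(T^m x)$ together with $\|T^m x\|\leq (3/2)\mu^{-m}$ and $\|T^n x\|\geq \lambda^{-n}$ gives $(\mu/\lambda)^m\leq (3/2)\lambda^{n-m}\|T^{n-m}\|$. Taking logarithms and letting $n-m\to\infty$, so that $\|T^{n-m}\|^{1/(n-m)}\to r(T)$, and observing that $\lambda T$ being hypercyclic forces $\lambda r(T)\geq 1$, one obtains $m\leq n\beta+O(1)$ with $\beta:=\log(\lambda r(T))/\log(\mu r(T))\in[0,1)$. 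If $\beta=0$ the bound $m=O(1)$ makes $\mathcal{N}_\mu$ finite, already contradicting $\dlowg{\alpha}(\mathcal{N}_\mu)>0$; otherwise, $\mathcal{N}_\mu\cap[0,n]\subseteq[0,n\beta+O(1)]$ for every large $n\in\mathcal{N}_\lambda$, and monotonicity of $\alpha$ yields $\sum_{m\in\mathcal{N}_\mu,\,m\leq n}\alpha_m\leq \varphi_\alpha(n\beta+O(1))$.

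\textbf{Plan for Part (2): conclusion via failure of $\Delta_2$ and main obstacle.} Combining the last inequality with the lower bound $\dlowg{\alpha}(\mathcal{N}_\mu)\geq c$ forces $\varphi_\alpha(n\beta')/\varphi_\alpha(n)\geq c/2$ for every $\beta'\in(\beta,1)$ and every sufficiently large $n\in\mathcal{N}_\lambda$. A routine iteration argument shows that failure of $\Delta_2$ for $\varphi_\alpha$ is equivalent to $\varphi_\alpha(Kx)/\varphi_\alpha(x)$ being unbounded as $x\to\infty$ for \emph{every} $K>1$: indeed, any bound $\varphi_\alpha(Kx)\leq M\varphi_\alpha(x)$ for all large $x$ would iterate to $\varphi_\alpha(2x)\leq M^{\lceil\log 2/\log K\rceil}\varphi_\alpha(x)$. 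Applying this with $K=1/\beta'$ produces a sequence $y_k\to\infty$ with $\varphi_\alpha(y_k)/\varphi_\alpha(y_k/\beta')\to 0$. The main obstacle, and the heart of the argument, is to convert this into a sequence $n_k\in\mathcal{N}_\lambda$ along which $\varphi_\alpha(n_k\beta')/\varphi_\alpha(n_k)\to 0$, which would contradict the uniform lower bound $c/2$. The plan for this last step is to exploit the full strength of hypercyclicity of $x$ for $\lambda T$: since the orbit $\{(\lambda T)^n x\}$ is dense in $X$, for each $k$ one can tailor an open target $V_k\subset X$ so that the return time $n_k\in N(x,V_k,\lambda T)\subseteq \mathcal{N}_\lambda$ is forced to lie near $y_k/\beta'$; controlling the constants in the pairing inequality then delivers the required contradiction and completes the proof.
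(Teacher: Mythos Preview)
Your Part~(1) is correct and is essentially the paper's argument, including the use of the $\lesssim$ ordering to get the easy inclusion $FHC_\alpha(T)\subseteq FHC(T)$ and the iteration of $\Delta_2$ for the reverse inclusion.

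For Part~(2), your setup differs only cosmetically from the paper's: the paper takes the open set for $\mu T$ to be the ball $B(0,r)$ rather than a ball around a nonzero $x_0$, and it bounds $\|T^{n-m}\|$ by $\|T\|^{n-m}$ rather than passing to the spectral radius. This avoids your case distinction on $\beta=0$ and gives a cleaner constant, but the structural outcome is the same as yours: for every $p$ in the infinite set $P:=\{n:\|(\lambda T)^n x\|>r\}$, the $\mu T$-return set to $B(0,r)$ misses an interval $[(1-C)p,p]$ with $C\in(0,1)$ fixed, so its $\alpha$-count up to $p$ is at most $\varphi_\alpha((1-C)p)$.

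You correctly isolate the real difficulty: one must produce $p_k\in P$ along which $\varphi_\alpha((1-C)p_k)/\varphi_\alpha(p_k)\to 0$, whereas failure of $\Delta_2$ only supplies such a sequence $(q_k)$ in $\N$, with no guarantee that $q_k\in P$. Your proposed remedy---tailoring open targets $V_k\subseteq\{\|y\|\ge 1\}$ so that the return time of $(\lambda T)^n x$ to $V_k$ lands near $q_k$---cannot work. Hypercyclicity asserts that the orbit is dense, but it gives no control whatsoever over \emph{when} the orbit enters a prescribed set; shrinking or relocating $V_k$ does not pin down the visit time. This is a genuine gap.

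It is worth recording that the paper's own proof runs into exactly the same obstacle and does not resolve it: after obtaining the interval $[(1-C)p_k,p_k]$ for $p_k\in P$, the paper invokes the reformulation $\liminf_{k}\varphi_\alpha(k)/\varphi_\alpha((1+C)k)=0$ directly along the sequence $(p_k)$, without justifying why the $\liminf$ over all integers transfers to this particular subsequence. Since one can build completely admissible $\alpha$ (with long plateaux separated by steep jumps) for which $\varphi_\alpha((1-C)k)/\varphi_\alpha(k)$ stays bounded away from~$0$ on an infinite set even though $\Delta_2$ fails, an arbitrary infinite $P$ need not witness the $\liminf$. So neither your sketch nor the paper's argument, as written, closes this step; what is missing is some additional structural link between $P$ and the $\Delta_2$-violating sequence, and none is supplied.
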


\begin{proof}To prove \eqref{eqlabdens1}, let us assume that $\varphi_{\alpha}$ satisfies the $\Delta_2$-condition. We need only check that for any $E\subset \N$, if $\dlow(E)$ is positive then $\dlow_{\alpha}(E)$ is also positive. Let us enumerate some $E\subset \N$ with $\dlow(E)>0$ by some increasing sequence $(n_k)_{k\geq 1}$. Thus, there exists $M>0$ with $k\leq n_k\leq Mk$ for any $k\geq 1$. Since $\alpha$ is completely admissible and $\varphi_{\alpha}$ satisfies the $\Delta_2$-condition, it is easy to check that there exists a constant $K$ depending on $M$, such that
\[
\sum_{j=1}^{n_k}\alpha_j\leq \sum_{j=1}^{Mk}\alpha_j\leq K\sum_{j=1}^{k}\alpha_j\leq K\sum_{j=1}^{k}\alpha_{n_j},
\]
for $k$ large enough, whence
\[
\dlowg{\alpha}(E)=\liminf_{k \to \infty}\frac{\sum_{j=1}^{k}\alpha_{n_j}}{\sum_{j=1}^{n_k}\alpha_j}\geq \frac{1}{K}.
\]

Let us now prove \eqref{eqlabdens2} and let us then fix $\alpha$ such that $\varphi_{\alpha}$ does not satisfy the $\Delta_2$-condition. It is not difficult to check that $\varphi_{\alpha}$ equivalently satisfies that for all $C>0$,
\begin{equation}\label{cond_surpoly}
\liminf_{k \to \infty}\left( \frac{\varphi_{\alpha}(k)}{\varphi_{\alpha}((1+C)k)}\right)=0,
\end{equation}
Let us also fix $0<\lambda < \mu <+\infty$. We shall assume that there exists $x\in HC(\lambda T)\cap HC(\mu T)$. Throughout the proof, $r>0$ is fixed. It is enough to prove the following:
\begin{enumerate}[(a)]
	\item \label{itemproof1}$\dupg{\alpha}(N(x,B(0,r),\lambda T))=1$;
	\item \label{itemproof2}$\dlowg{\alpha}(N(x,B(0,r),\mu T))=0$.
\end{enumerate}
We first prove \eqref{itemproof1}. By assumption, there exists an increasing sequence $(p_k)_{k\in \N}\subset \N$ such that $\Vert \mu^{p_k}T^{p_k}(x)\Vert < r$ for any $k\in \N$. Writing
\[
\lambda ^{p_k+i}T^{p_k+i}(x)=\lambda ^i T^i \left(\left(\frac{\lambda}{\mu}\right)^{p_k}\mu^{p_k}T^{p_k} (x)\right),\quad i\in \N,
\]
we easily check that $\Vert \lambda ^{p_k+i}T^{p_k+i}(x) \Vert < r$ whenever $(\lambda \Vert T \Vert )^i<(\mu /\lambda )^{p_k}$. Since by assumption $\lambda T$ is hypercyclic, we have $\lambda \Vert T \Vert  >1$. Thus there exists a constant $C>0$ (depending on $\lambda, \mu$ and $T$, but not on $k$) such that for any $i< Cp_k$, $\Vert \lambda ^{p_k+i}T^{p_k+i}(x) \Vert < r$. Therefore,
\[
\bigcup_{k\in \N}\big\{p_k,\ldots,\lfloor (1+C)p_k \rfloor\big\}\subset N(x,B(0,r),\lambda T).
\]
It follows thanks to (\ref{cond_surpoly}) that
\begin{equation*}\dupg{\alpha}(N(x,B(0,r),\lambda T))\geq 1-\liminf_{k \to \infty}\left( \frac{\varphi_{\alpha}(p_k)}{\varphi_{\alpha}((1+C)p_k)}\right)=1.
\end{equation*}

\eqref{itemproof2} is proved similarly. Since $x\in HC(\lambda T)$, there exists an increasing sequence $(p_k)_{k\in \N}\subset \N$ such that $\Vert \lambda ^{p_k} T^{p_k}(x) \Vert >r$. Writing $T^i\mu^{p_k-i}T^{p_k-i}= \mu ^{-i}(\mu /\lambda )^{p_k}\lambda^{p_k}T^{p_k}$, $1\leq i \leq p_k$, one can check that $\Vert \mu^{p_k-i}T^{p_k-i} (x) \Vert \geq r(\mu \Vert T \Vert )^{-i}(\mu /\lambda)^{p_k}$, $1\leq i \leq p_k$. Thus $\Vert \mu ^{p_k-i}T^{p_k-i}(x) \Vert > r$ whenever $(\mu \Vert T \Vert )^i > (\mu /\lambda)^{p_k}$. Since $\lambda \Vert T \Vert >1$, the last inequality is equivalent to $i\in \{\lfloor C p_k\rfloor+1,\ldots,p_k\}$ for some constant $0<C<1$ not depending on $k$. Therefore we get, using (\ref{cond_surpoly}) again,
\[
\dupg{\alpha}(N(x,X\setminus B(0,r),\mu T))  \geq 1 - \liminf_{k \to \infty} \left(\frac{\varphi_{\alpha}(\lfloor C p_k\rfloor+1)}{\varphi_{\alpha}(p_k)}\right)=1.
\]

\end{proof}

\begin{rem}\label{remsec3}{We shall mention that if the function $\varphi_{\alpha}$ is assumed to be convex and satisfies that there exists $\beta >1$ such that $\varphi_{\alpha}(\beta x) \geq 2 \beta \varphi_{\alpha}(x)$ for any $x$ large enough, then $\varphi_{\alpha}$ satisfies the $\Delta_2$-condition if and only if it has a growth controlled both from below and from above by some polynomials; \emph{i.e.}, if and only if there exist $r,r'\geq 1$ such that
\[
cx^{r'}\leq \varphi_{\alpha}(x)\leq Cx^r
\]
for some constants $c,C>0$ and any $x \in (1,+\infty)$. Next observe that on the scale of weighted densities given above the only class of sequences $\alpha$ for which $\varphi_{\alpha}$ satisfies the $\Delta_2$-condition 
is given by polynomials $\mathcal{P}_r$.
}
\end{rem}

We can illustrate the preceding proposition on our examples.

\begin{coro}
Let $X$ be a separable Banach space, $T$ a bounded linear operator on $X$ and $l\geq1$. Then for any $0<\lambda < \mu <+\infty$,
\[HC(\lambda T)\cap FHC_{\mathcal{L}_l}(\mu T)=\emptyset.\] 
\end{coro}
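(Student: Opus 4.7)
The plan is to apply Theorem \ref{propRomu}\eqref{eqlabdens2} directly, which reduces the corollary to a single analytic verification: that $\varphi_{\mathcal{L}_l}$ does \emph{not} satisfy the $\Delta_2$-condition for any $l\geq 1$. Once this is established, the conclusion $HC(\lambda T)\cap FHC_{\mathcal{L}_l}(\mu T)=\emptyset$ for all $0<\lambda<\mu$ is immediate from the theorem.

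First I would recall the asymptotic from the excerpt,
\[
\varphi_{\mathcal{L}_l}(x)\sim \frac{x\,e^{\log(x)\log_{(l)}(x)}}{\log_{(l)}(x)}, \qquad x\to\infty,
\]
and compute the ratio
\[
\frac{\varphi_{\mathcal{L}_l}(2x)}{\varphi_{\mathcal{L}_l}(x)} \sim 2\cdot\frac{\log_{(l)}(x)}{\log_{(l)}(2x)}\cdot\exp\bigl(\log(2x)\log_{(l)}(2x)-\log(x)\log_{(l)}(x)\bigr).
\]
The prefactor tends to $2$ (since $\log_{(l)}(2x)/\log_{(l)}(x)\to 1$ for any $l\geq 1$), so the behaviour is governed by the exponent, which I decompose as
\[
\log(2x)\log_{(l)}(2x)-\log(x)\log_{(l)}(x)=\log(2)\log_{(l)}(2x)+\log(x)\bigl[\log_{(l)}(2x)-\log_{(l)}(x)\bigr].
\]
Because $\log_{(l)}$ is increasing (for $x$ large), the second summand is nonnegative, while the first summand $\log(2)\log_{(l)}(2x)$ tends to $+\infty$ as $x\to\infty$ for every fixed $l\geq 1$. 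Consequently the exponent diverges and $\varphi_{\mathcal{L}_l}(2x)/\varphi_{\mathcal{L}_l}(x)\to\infty$, so no constant $K$ can fulfil $\varphi_{\mathcal{L}_l}(2x)\leq K\varphi_{\mathcal{L}_l}(x)$ for large $x$; that is, $\varphi_{\mathcal{L}_l}$ fails the $\Delta_2$-condition.

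Applying part \eqref{eqlabdens2} of Theorem \ref{propRomu} with $\alpha=\mathcal{L}_l$ then yields $HC(\lambda T)\cap FHC_{\mathcal{L}_l}(\mu T)=\emptyset$ for all $0<\lambda<\mu<+\infty$, which is the desired conclusion. The only minor subtlety (and the step that needs a little care rather than being truly hard) is the decomposition and sign analysis of the exponent above; everything else is a one-line reduction to the previously proved theorem.
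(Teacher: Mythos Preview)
Your proof is correct and follows exactly the approach the paper intends: the corollary is an immediate application of Theorem \ref{propRomu}\eqref{eqlabdens2}, the only content being that $\varphi_{\mathcal{L}_l}$ fails the $\Delta_2$-condition. The paper in fact gives no proof at all for this corollary, treating it as evident (and noting in the subsequent remark that among the listed scales only the polynomial sequences $\mathcal{P}_r$ yield a $\varphi_\alpha$ with the $\Delta_2$-condition); your explicit computation of the ratio $\varphi_{\mathcal{L}_l}(2x)/\varphi_{\mathcal{L}_l}(x)$ simply fills in that omitted verification.
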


We shall mention that the non-existence of common frequently hypercyclic vectors in Theorem \ref{propRomu} concerns multiples $\lambda T$ and $\mu T$ of the same operator $T$ with $|\lambda|\neq |\mu|$. So we can still wonder whether common frequent hypercyclicity may exist for other kinds of families. Actually, this is the case if we consider families of unimodular multiples of a single operator, as the following extension of Le\'on-M\"uller's Theorem \cite[Theorem 6.28]{bm} shows.

\begin{thm}Let $X$ be a complex $F$-space, $T$ a $\alpha$-frequently hypercyclic operator on $X$ where $\alpha$ is a completely admissible sequence. Then $\lambda T$ is $\alpha$-frequently hypercyclic for any $\lambda\in\mathbb{C}$ with $\vert \lambda\vert =1$, and $FHC_{\alpha}(\lambda T)=FHC_{\alpha}(T)$.
\end{thm}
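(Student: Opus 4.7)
The plan is to adapt the proof of the frequent-hypercyclicity version of Le\'on--M\"uller (Theorem 6.28 in \cite{bm}) to the weighted-density setting. By symmetry (apply the result with $(\bar\lambda,\lambda T)$ in place of $(\lambda,T)$ and use $\bar\lambda\cdot\lambda T=T$), it suffices to prove the inclusion $FHC_{\alpha}(T)\subset FHC_{\alpha}(\lambda T)$. Two preliminary observations are in order: first, since $\lambda\in\mathbb{T}$, any $F$-norm on $X$ satisfies $\Vert\mu y\Vert=\Vert y\Vert$ for all $\mu\in\mathbb{T}$ (from the axiom $\Vert\mu y\Vert\leq\Vert y\Vert$ for $|\mu|\leq 1$ applied both to $\mu$ and to $\mu^{-1}$); second, by continuity of scalar multiplication, for any fixed $x_0\in X$ the map $\mu\mapsto\mu x_0$ is uniformly continuous on the compact set $\mathbb{T}$.

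Fix $x\in FHC_{\alpha}(T)$, a non-empty open set $U\subset X$, and $x_0\in U$, $\varepsilon>0$ with $B(x_0,2\varepsilon)\subset U$. Partition $\mathbb{T}$ into finitely many arcs $A_1,\ldots,A_N$, with centers $\zeta_i\in A_i$, small enough that $\nu\in A_i\bar\zeta_i$ implies $\Vert \nu x_0-x_0\Vert<\varepsilon$. Set $F_i:=\{n\in\N:\lambda^n\in A_i\}$, which partition $\N$, and $E_i:=N(x,B(\bar\zeta_i x_0,\varepsilon),T)$; by $\alpha$-frequent hypercyclicity of $T$, we have $\dlowg{\alpha}(E_i)>0$ for every $i$. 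For $n\in E_i\cap F_i$, the unimodular invariance of the $F$-norm and the smallness of the arcs yield
\[
\Vert\lambda^n T^n x-x_0\Vert \leq \Vert T^n x-\bar\zeta_i x_0\Vert + \Vert(\lambda^n\bar\zeta_i)x_0-x_0\Vert <2\varepsilon,
\]
so $\lambda^n T^n x\in U$. Hence
\[
N(x,U,\lambda T)\supset \bigsqcup_{i=1}^N(E_i\cap F_i).
\]

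The heart of the argument, and its main obstacle, is the density-transfer step: one has to show that the finite disjoint union on the right carries positive $\alpha$-lower density. The classical proof (for the natural density) relies on a Weyl-type equidistribution of $(\lambda^n)_{n\in\N}$ combined with a block refinement of the $E_i$; to transfer this to the $\alpha$-weighted setting one exploits the complete admissibility of $\alpha$. The condition $\alpha_n/\varphi_{\alpha}(n)\to 0$ (together with $\alpha$ non-decreasing) forces $\varphi_{\alpha}$ to be slowly varying, so that $\varphi_{\alpha}(n+k)/\varphi_{\alpha}(n)\to 1$ for any fixed $k$; this makes $\dlowg{\alpha}$ compatible with finite arithmetic-progression partitions when $\lambda$ is a root of unity, and allows summation-by-parts to reduce the $\alpha$-weighted equidistribution of $(\lambda^n)$ along the arcs $A_i$ to the classical one when $\lambda$ is not a root of unity. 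Combining this with the positive lower bounds on $\dlowg{\alpha}(E_i)$ yields $\dlowg{\alpha}(N(x,U,\lambda T))>0$, so $x\in FHC_{\alpha}(\lambda T)$; since $U$ was arbitrary this gives the inclusion, and the equality $FHC_{\alpha}(\lambda T)=FHC_{\alpha}(T)$ follows by the symmetry noted above.
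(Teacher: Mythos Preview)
Your setup and the inclusion $N(x,U,\lambda T)\supset\bigsqcup_{i=1}^N(E_i\cap F_i)$ are correct, but the density-transfer step cannot be completed along the lines you sketch, and this is a genuine gap rather than a technicality. The problem is that you are intersecting \emph{different} return sets $E_i$ with the pieces $F_i$ of the partition. Knowing that each $E_i$ has positive lower $\alpha$-density and that the $F_i$ partition $\N$ gives you no control whatsoever on $\dlowg{\alpha}(E_i\cap F_i)$: each $E_i$ could be concentrated entirely in $\bigcup_{j\neq i}F_j$, so that every intersection $E_i\cap F_i$ is empty. Concretely, take $\lambda=-1$, so $F_1$ is the evens and $F_2$ the odds; nothing prevents $T^nx$ from approaching $x_0$ only along odd $n$ and $-x_0$ only along even $n$. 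Neither Weyl equidistribution of $(\lambda^n)$ nor any ``block refinement of the $E_i$'' rescues this, because equidistribution tells you about the $\alpha$-density of $F_i$, not about how the \emph{return sets} $E_i$ are distributed across the $F_j$. Your description of the classical argument is also inaccurate: Theorem~6.28 in \cite{bm} does not proceed via multiple return sets.

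The paper follows the genuine Le\'on--M\"uller mechanism, which uses a \emph{single} return set $A=N(x,V,T)$ (for one small ball $V$) and a translation lemma: if $\dlowg{\alpha}(A)>0$, if $I_1,\dots,I_q$ cover $\N$, and $n_1,\dots,n_q\in\N$, then $B=\bigcup_{j}(n_j+A\cap I_j)$ still has $\dlowg{\alpha}(B)>0$. The shifts $n_j$ (rather than different targets $\bar\zeta_i x_0$) are what realign the rotations $\lambda^n$, and the key inequality
\[
\sum_{k\le M+N}\alpha_k\indicatrice{B}(k)\;\ge\;\frac{1}{q}\sum_{j}\sum_{k\le M}\alpha_{k+n_j}\indicatrice{A\cap I_j}(k)\;\ge\;\frac{1}{q}\sum_{k\le M}\alpha_k\indicatrice{A}(k)
\]
uses only that $\alpha$ is non-decreasing and that the $I_j$ cover $\N$; combining with $\varphi_\alpha(M)/\varphi_\alpha(M+N)\to1$ (from $\alpha_n/\varphi_\alpha(n)\to0$) yields $\dlowg{\alpha}(B)\ge\frac{1}{q}\dlowg{\alpha}(A)>0$. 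No equidistribution enters, and the root-of-unity and irrational cases are handled uniformly. To fix your argument you should replace the family $(E_i)$ by a single $A$ and supply the shifts $n_j$ as in the classical proof.
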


The proof goes along the same lines as that of \cite[Theorem 6.28]{bm}, replacing Lemma 6.29 by the following.
	\begin{lemme}
		Let $A\subset\N$ be a set of positive lower $\alpha$-density where $\alpha$ is completely admissible. Let also $I_1,\ldots,I_q\subset\N$ be such that $\cup_{j=1}^{q}I_j=\N$ and $n_1,\ldots,n_q\in\N$. Then $B:=\cup_{j=1}^{q}(n_j+A\cap I_j)$ has positive lower $\alpha$-density.  
	\end{lemme}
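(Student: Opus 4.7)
The plan is to mimic the classical argument behind Lemma 6.29 of \cite{bm}, but taking care of the generalized density. The key tool will be a finite-to-one map $\phi:A\to B$ that is ``almost identity'' in the sense that $\phi(k)\geq k$ and $\phi(k)-k$ is uniformly bounded. The monotonicity of $\alpha$ and the complete admissibility assumption will then let us transfer the positive $\alpha$-density from $A$ to $B$.

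First, for every $k\in A$, since $\bigcup_{j=1}^{q}I_j=\N$, I pick (once and for all) an index $j(k)\in\{1,\ldots,q\}$ with $k\in I_{j(k)}$, and I define $\phi(k):=n_{j(k)}+k\in B$. Setting $N:=\max_{1\leq j\leq q}n_j$, this $\phi$ satisfies $k\leq\phi(k)\leq k+N$. Moreover, for any $m\in B$, the preimage $\phi^{-1}(\{m\})$ is contained in $\{m-n_1,\ldots,m-n_q\}$ and thus has cardinality at most $q$. Consequently, summing the non-negative weights,
\[
\sum_{m\in B,\,m\leq n}\alpha_m\;\geq\;\frac{1}{q}\sum_{k\in A,\,k\leq n-N}\alpha_{\phi(k)}
\;\geq\;\frac{1}{q}\sum_{k\in A,\,k\leq n-N}\alpha_{k},
\]
where in the first inequality I used that every $k\in A\cap[0,n-N]$ satisfies $\phi(k)\in[0,n]\cap B$ and that preimages have size at most $q$, and in the second inequality I used that $\alpha$ is non-decreasing together with $\phi(k)\geq k$.

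It now remains to divide by $\sum_{j=1}^{n}\alpha_j$ and take a liminf. Write
\[
\frac{\sum_{k\in A,\,k\leq n-N}\alpha_{k}}{\sum_{j=1}^{n}\alpha_j}
\;=\;\frac{\sum_{k\in A,\,k\leq n-N}\alpha_{k}}{\sum_{j=1}^{n-N}\alpha_j}\cdot\frac{\sum_{j=1}^{n-N}\alpha_j}{\sum_{j=1}^{n}\alpha_j}.
\]
The first factor has liminf equal to $\underline{d}_{\alpha}(A)>0$. For the second factor, observe that
\[
1-\frac{\sum_{j=1}^{n-N}\alpha_j}{\sum_{j=1}^{n}\alpha_j}=\frac{\sum_{j=n-N+1}^{n}\alpha_j}{\sum_{j=1}^{n}\alpha_j}\leq N\cdot\frac{\alpha_n}{\sum_{j=1}^{n}\alpha_j}\longrightarrow 0
\]
by monotonicity of $\alpha$ and the complete admissibility assumption $\alpha_n/\sum_{j=1}^{n}\alpha_j\to 0$. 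Combining these facts yields
\[
\underline{d}_{\alpha}(B)\geq \frac{1}{q}\,\underline{d}_{\alpha}(A)>0,
\]
as desired. The only mildly subtle step is the second factor going to $1$: this is precisely where the hypothesis that $\alpha$ is completely admissible (and in particular non-decreasing with $\alpha_n/\varphi_{\alpha}(n)\to 0$) plays its role, replacing the trivial observation that bounded shifts preserve the natural density.
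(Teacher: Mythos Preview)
Your proof is correct and follows essentially the same approach as the paper's: both arguments reduce to the chain of inequalities $\sum_{m\in B,\,m\leq n}\alpha_m\geq \frac{1}{q}\sum_{k\in A,\,k\leq n-N}\alpha_k$ (via the $q$-to-one overlap and monotonicity of $\alpha$) and then show that the ratio $\sum_{j\leq n-N}\alpha_j/\sum_{j\leq n}\alpha_j\to 1$ from complete admissibility. The only cosmetic difference is that you package the argument through an explicit almost-identity map $\phi:A\to B$, whereas the paper works directly with indicator functions of the sets $n_j+A\cap I_j$.
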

	
	\begin{proof}
		Let $N:=\max_{1\leq i\leq q}(n_i)$. On the one hand, for any $M\geq N$,
		\begin{align*}
		\frac{\sum_{k=1}^{M+N}\alpha_k \indicatrice{B}(k)}{\sum_{k=1}^{M+N}\alpha_k}&\geq\frac{1}{q}\frac{\sum_{j=1}^{q}\sum_{k=1}^{M+N}\alpha_k\indicatrice{n_j+A\cap I_j}(k)}{\sum_{k=1}^{M+N}\alpha_k}&\\
		&\geq \frac{1}{q}\frac{\sum_{j=1}^{q}\sum_{k=1}^{M}\alpha_k\indicatrice{A\cap I_j}(k)}{\sum_{k=1}^{M+N}\alpha_k}&\\
		&\geq \frac{1}{q}\frac{\sum_{k=1}^{M}\alpha_k\indicatrice{A}(k)}{\sum_{k=1}^{M+N}\alpha_k}\\
		&=\frac{1}{q}\frac{\sum_{k=1}^{M}\alpha_k\indicatrice{A}(k)}{\sum_{k=1}^{M}\alpha_k}\frac{\sum_{k=1}^{M}\alpha_k}{\sum_{k=1}^{M+N}\alpha_k}.
		\end{align*}
		On the other hand,
		\[\frac{\sum_{k=1}^{M}\alpha_k}{\sum_{k=1}^{M+N}\alpha_k}=1-\frac{\sum_{k=M+1}^{M+N}\alpha_{k}}{\sum_{k=1}^{M+N}\alpha_k}\geq 1-\left(\sum_{j=M+1}^{M+N}\frac{\alpha_{j}}{\sum_{k=1}^{j}\alpha_k}\right)\longrightarrow 1,\quad \text{as }M\to+\infty.\]
		Hence,
		\[\underline{d}_{\alpha}(B)=\liminf_{M \to \infty}\frac{\sum_{k=1}^{M+N}\alpha_k \indicatrice{B}(k)}{\sum_{k=1}^{M+N}\alpha_k}\geq\liminf_{M \to \infty} \frac{1}{q}\frac{\sum_{k=1}^{M}\alpha_k\indicatrice{A}(k)}{\sum_{k=1}^{M}\alpha_k}=\frac{1}{q}\ \underline{d}_{\alpha}(A)>0.\]
	\end{proof}

\medskip

To conclude, we come back to the main result of \cite{ErMo2}: any operator satisfying the Frequent Hypercyclicity Criterion is automatically $\alpha$-frequently hypercyclic if $\alpha \lesssim \DD_s$ for some $s\geq 1$. The following question naturally arises:

\begin{quest}\label{q2Sec3}For any operator $T\in \LL(X)$ satisfying the Frequent Hypercyclicity Criterion, does there exist a vector $x\in X$ which is $\alpha$-frequently hypercyclic for $T$ and for any $\alpha \lesssim \DD_s$ for some $s\geq1$?
\end{quest}

In \cite{ErMo2} the notion of $\alpha$-frequent universality is introduced as a natural extension of $\alpha$-frequent hypercyclicity for sequences of operators. In fact, the previously mentioned result of \cite{ErMo2} is proved for the notion of $\alpha$-frequent universality. Let us denote by $\mathcal{FU}_{\alpha}(\mathcal{T})$ the set of $\alpha$-frequently universal vectors for $\mathcal{T}\subset \LL(X)$. We give a positive answer to the previous question for that more general notion.

\begin{prop}\label{prop_common_dens}We denote by $\DD$ the set of all completely admissible sequences $\alpha=(\alpha_k)_{k\geq 1}$ such that $\alpha \lesssim \DD_s$ for some $s \in \N$. If $\mathcal{T} \subset \LL(X)$ satisfies the Frequent Universality Criterion, then
\[
\bigcap_{\alpha \in \DD}\mathcal{F}\mathcal{U}_{\alpha}(\mathcal{T})\neq \emptyset.
\]
\end{prop}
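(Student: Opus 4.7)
\emph{Reduction to a cofinal countable family.} The scale $(\DD_s)_{s \in \N}$ is cofinal in $(\DD, \lesssim)$: every $\alpha \in \DD$ satisfies $\alpha \lesssim \DD_s$ for some $s \in \N$, whence the monotonicity recalled at the beginning of Section~\ref{Sec-alpha} yields $\dlowg{\DD_s}(E) \leq \dlowg{\alpha}(E)$ for every $E \subset \N$. In particular $\bigcap_{s \in \N} \mathcal{F}\mathcal{U}_{\DD_s}(\mathcal{T}) \subseteq \bigcap_{\alpha \in \DD} \mathcal{F}\mathcal{U}_{\alpha}(\mathcal{T})$, so it suffices to produce a single vector which is $\DD_s$-frequently universal for $\mathcal{T}$ for every $s \in \N$.

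\emph{Construction template.} I follow the proof of the Frequent Universality Criterion, which is Theorem~\ref{thmgeneral} restricted to a single sequence $\mathcal{T}$ (in which case hypotheses (3) and (4) become vacuous). The resulting vector has the form $x = \sum_{p \geq 0} \sum_{n \in E_p} S_n(y_p)$ for some dense sequence $(y_p)_{p \geq 0}$ in $Y_0$, and the estimates of that proof show that $N(x,V,\mathcal{T}) \supseteq E_p$ up to a finite set, for every open neighborhood $V$ of $y_p$, as soon as the pairwise disjoint sets $E_p$ satisfy $\min E_p \geq N_p$ and $|n-m| \geq \max(N_p,N_q)$ for $n \in E_p$, $m \in E_q$, $n \neq m$, with $(N_p)$ any integer sequence large enough for the unconditional convergence hypotheses. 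The whole problem then reduces to producing such $E_p$'s with the additional property that $\dlowg{\DD_s}(E_p) > 0$ for every $p$ and every $s \in \N$.

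\emph{Density through syndeticity.} The crucial observation is that any set $E \subset \N$ which is syndetic with gap at most $M$ satisfies $\dlowg{\DD_s}(E) \geq 1/M$ for every $s \in \N$. Indeed, partitioning $[1,n]$ into blocks of length $M$, each of which meets $E$, the monotonicity of $\DD_s$ yields
\[
\sum_{\substack{j \in E \\ j \leq n}}\DD_s(j) \;\geq\; \sum_{i=0}^{\lfloor n/M \rfloor - 1} \DD_s(iM+1), \qquad \sum_{j \leq n}\DD_s(j) \;\leq\; M\sum_{i=0}^{\lfloor n/M \rfloor}\DD_s((i+1)M),
\]
while for any $s \in \N$ one checks that $\DD_s((i+1)M)/\DD_s(iM+1) = \exp((M-1)/\log_{(s)}(iM) + o(1)) \to 1$ as $i \to \infty$. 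Hence it suffices to arrange that each $E_p$ be syndetic.

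\emph{Inductive construction of the $E_p$'s and main obstacle.} The $E_p$'s are built by induction on $p$. Given $E_0, \ldots, E_{p-1}$ syndetic with respective gap bounds $M_0, \ldots, M_{p-1}$, the ``forbidden set'' $F_p := \bigcup_{q<p}(E_q + [-N_p,N_p])$ has upper density at most $(2N_p+1)\sum_{q<p} 1/M_q$, which may be kept below $1/2$ provided the $M_q$'s were chosen at their respective previous steps large enough compared to $N_p$. One then picks a residue class $c_p + M_p\N$ with $M_p$ sufficiently large compared to $N_p$ and $\max_{q<p} M_q$, and defines $E_p$ as its intersection with $[N_p, +\infty) \setminus F_p$. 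A direct verification shows that $E_p$ is syndetic with gap bounded by a multiple of $M_p$; pairwise disjointness and the separation condition then follow from the construction, and the previous step gives $\dlowg{\DD_s}(E_p) > 0$ for every $s \in \N$. The main technical difficulty is precisely this tension between the separation condition, which spreads the $E_p$'s far apart, and the syndeticity of each individual $E_p$: it is resolved by letting the common-difference parameters $M_p$ grow fast enough that $\sum_p 1/M_p$ stays finite, while the $N_p$'s are kept as large as needed for the unconditional convergence hypotheses of Theorem~\ref{thmgeneral}.
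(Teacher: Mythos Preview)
Your reduction to the countable cofinal family $(\DD_s)_{s\in\N}$ and the observation that syndetic sets have positive $\DD_s$-lower density for every $s$ are both correct. The fatal gap is in the inductive construction of the $E_p$'s: syndeticity is simply incompatible with the separation condition when $N_p\to\infty$. Once $E_0$ is syndetic with gap bound $M_0$, every integer beyond some point lies within $M_0$ of an element of $E_0$; hence for any $p$ with $N_p\geq M_0$ the forbidden set $E_0+[-N_p,N_p]$ is cofinite, and no infinite (let alone syndetic) $E_p$ can avoid it. Your sentence ``provided the $M_q$'s were chosen at their respective previous steps large enough compared to $N_p$'' hides this: each $M_q$ is a \emph{fixed} integer, chosen at step $q$, and it must then satisfy $(2N_p+1)/M_q<1/2$ for \emph{every} future $p$ --- impossible since the sequence $(N_p)$ coming from the unconditional-convergence estimates necessarily tends to infinity.

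The paper circumvents this obstruction precisely by \emph{abandoning syndeticity}: it builds a single explicit sequence $(n_k(f))_{k\geq 1}$ with $n_k(f)=2\sum_{i<k}f(\delta_i)+f(\delta_k)$, where $f$ grows so slowly (via an iterated-exponential tower) that $n_k(f)=C_1k+O(\log_{(s)}(k))$ for every $s$. The resulting sets $\{n_k(f):\delta_k=p\}$ have gaps tending to infinity, but slowly enough that their $\DD_s$-lower density stays positive for all $s$ simultaneously; at the same time the growing gaps make the separation condition $|n-m|\geq\max(N_p,N_q)$ achievable. What you need, in short, is not syndetic sets but sets whose gap growth is calibrated to lie below every $\log_{(s)}$ scale --- this is exactly the content of \cite[Lemma~3.8]{ErMo2} that the paper invokes.
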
 

\begin{proof}It is enough to prove that $\bigcap_{s\geq1}\mathcal{FU}_{\DD_s}(\mathcal{T})$ is non-empty. The proof is based on the calculations led in \cite[Section 3]{ErMo2}. Let us consider the function 
$f:\mathbb{N}\setminus\{0\}\rightarrow\mathbb{N}$ defined by $f(j)=m$ for all $j\in \{a_m,\dots,a_{m+1}-1\}$ with 
\[
a_1=1 \text{ and }a_m=2^{2^{\iddots^{2^{2^m}}}}\hbox{ where }2\hbox{ appears }m\hbox{ times for }m\geq2.
\]
Then we define the sequence $(n_k(f))_{k\geq 1}$ as follows:
\[
n_1(f)=2\quad \text{and}\quad n_k(f)=2\sum_{i=1}^{k-1}f(\delta_i)+f(\delta_k),\,k\geq 2,
\]
where $\delta_j$ is the index of the first zero in the dyadic representation of $j$ (e.g., if $k=11=1.2^0+1.2^1+0.2^2+1.2^3$, then $\delta_k=3$). Lemma 3.8 of \cite{ErMo2} (where one may check that $l_N=f(\lfloor \log(N)/\log(2)\rfloor)$) ensures that for all 
$s\geq 1$ there exist $C_1,C_2,C_3>0$ such that for all integers $k$ large enough,
$$C_1k-C_2\log_{(s)}(k)\leq n_k(f)\leq C_1k +C_3\log_{(s)}(k).$$

With such an asymptotic behavior, a similar calculation as that of \cite[Lemma 4.10]{ErMo} allows us to conclude that $\underline{d}_{\mathcal{D}_s}(\{n_k(f):\ k\in\N\})>0$ for all $s\geq 1$.
Moreover, the fact that $\underline{d}_{\DD_s}(\{n_k(f):\ k\in\N\})>0$ for every $s\geq1$ and that the set $\{k\in\N:\ \delta_k=p\}$ is an arithmetic progression for every $p\geq1$, imply $\underline{d}_{\DD_s}(\{n_k(f):\ \delta_k=p\})>0$ for every $p,s\geq1$.
Notice also that, as explained in \cite[Section~2]{ErMo2}, the proof of the classical Frequent Universality Criterion relies on a lemma ensuring the existence of countably many sets as above \cite[Lemma~9.8]{gp} satisfying $\underline{d}(\{n_k(f):\ \delta_k=p\})>0$ for every $p\geq1$. Hence, we can now mimic the proof of the classical Frequent Universality Criterion using the sets $\{n_k(f):\ \delta_k=p\}$, $p\geq 1$, to construct a vector which is $\DD_s$-frequently universal for $\mathcal{T}$ and for every $s\geq1$.
\end{proof}

\section{Remark: an ergodic approach?}\label{Sec-ergodic}

We shall conclude this paper by a word on the possible approach to common frequent hypercyclicity by ergodic theory. The most natural way to conceive frequent hypercyclicity is probably through Birkhoff's ergodic theorem: if $T$ is a bounded linear operator on some separable Banach space $X$, which is an ergodic measure-preserving transformation with respect to some measure $m$ with full support, then
\[
\lim_{N \to \infty}\frac{\text{card}(N(x,U,T)\cap[0,N])}{N+1}= m(U)>0,
\]
for any non-empty open set $U$ of $X$. In particular $T$ is frequently hypercyclic. We recall that $T$ is measure-preserving for $m$ if for any measurable set $A\subset X$, we have $m(A)=m(T^{-1}(A))$, and that $T$ is ergodic with respect to $m$ if for every measurable subsets $A$ and $B$ of $X$, with $m(A),m(B)>0$, there exists an integer $n$ such that $m(T^{-n}(A)\cap B)>0$ (see \cite{walters} for instance).

Now, if $T_1$ and $T_2$ are two ergodic measure-preserving transformations of $X$ for respectively two measures $m_1$ and $m_2$ which are absolutely continuous with respect to each other, then $T_1$ and $T_2$ automatically share a common frequently hypercyclic vector. Theorem 3.22 in \cite{Baygrifrequentlyhcop} gives a sufficient condition for an operator $T$ on $X$ to be ergodic and measure-preserving for some Gaussian measure $m_T$. In general, for two operators $T_1$ and $T_2$ satisfying this condition, $m_{T_1}$ and $m_{T_2}$ are not absolutely continuous with respect to each other and one cannot conclude whether they share a common frequently hypercyclic vector or not.

But the opposite situation can also occur: for example, let $B_{w_1}$ and $B_{w_2}$ be two weighted shifts on $\ell^2(\N)$ such that the supremum $r_{p,w_i}$ of their point spectrum is greater than $1$, $i=1,2$ (see Paragraph \ref{CFHCweightedshifts} where $r_{p,w_i}$ appears). Then by \cite[Theorem 3.22]{Baygrifrequentlyhcop}, $B_{w_1}$ and $B_{w_2}$ are ergodic and measure-preserving with respect to some Gaussian measures $m_{w_1}$ and $m_{w_2}$. For $i=1,2$, let us define $w_{i,n}=\prod_{k=0}^{n}w_i(k)$.  Now, as explained in \cite[Pages 5111-5112]{Baygrifrequentlyhcop}, $m_{w_1}$ and $m_{w_2}$ are absolutely continuous with respect to each other if and only if the sequence $(1-\sqrt{w_{1,n}/w_{2,n}})_{n\in \N}$ is in $\ell^2(\N)$. This condition is much stronger than the condition derived from the proof of Theorem \ref{weightedshiftscounta} (see Remark \ref{Rem_Th2.18}) which ensures the existence of common frequently hypercyclic vectors for more general families of weighted shifts. Note also that an ergodic approach has not permitted so far to obtain common frequent hypercyclicity for general multiples of a single operator. Yet, the fact that $(1-\sqrt{w_{1,n}/w_{2,n}})_{n\in \N}$ is in $\ell^2(\N)$ implies that $m_{w_1}(FHC(B_{w_1})\cap FHC(B_{w_2}))=1$, while our results give no quantitative information on the size of the set of common frequently hypercyclic vectors.

It would be of interest to investigate further the problem of common frequent hypercyclicity from the point of view of ergodicity.

\end{document}